\title[Semisimplification of contragredient Lie algebras]{Semisimplification of contragredient Lie algebras}
\author{Iv\'an Angiono}
\address{FaMAF-CIEM (CONICET), Universidad Nacional de C\'ordoba, Medina Allende s/n, Ciudad Universitaria, 5000 C\'ordoba, Rep\'ublica Argentina}
\email{ivan.angiono@unc.edu.ar}
\urladdr{}
\author{Julia Plavnik}
\address{Department of Mathematics, Indiana University, Bloomington, IN 47405, USA.}
\email{jplavnik@iu.edu}
\urladdr{}
\author{Guillermo Sanmarco}
\address{Department of Mathematics, University of Washington, 
Seattle, WA 98195, USA.}
\email{sanmarco@uw.edu}
\urladdr{}
\newcommand{\ad}{\operatorname{ad}}
\newcommand{\GL}{\operatorname{GL}}
\newcommand{\gr}{\operatorname{gr}}
\newcommand{\Hom}{\mathsf{Hom}}
\newcommand{\one}{\mathds{1}}
\newcommand{\re}{\operatorname{re}}
\newcommand{\rk}{\operatorname{rank}}
\newcommand{\id}{\operatorname{id}}
\newcommand{\sd}{\operatorname{sdim}}
\newcommand{\ond}{\operatorname{o,nd}}
\newcommand{\balp}{\boldsymbol\alpha}
\newcommand{\sfS}{\mathsf{S}}
\newcommand{\height}{\mathsf{ht}}
\newcommand\restr[2]{{\left.\kern-\nulldelimiterspace #1 \vphantom{\big|} \right|_{#2}}} 
\newcommand{\Rep}{\mathsf{Rep}}
\renewcommand{\Vec}{\mathsf{Vec}_{\Bbbk}}
\newcommand{\sVec}{\mathsf{sVec}_\Bbbk}
\newcommand{\Ver}{\mathsf{Ver}}
\newcommand{\Verp}{\mathsf{Ver}_p}
\newcommand{\ot}{\otimes}
\newcommand{\OLie}{\mathsf{OLie}}
\newcommand{\indcat}[1]{#1^{\op{ind}}}
\providecommand{\op}[1]{\operatorname{#1}}
\newcommand{\ov}[1]{\overline{#1}}
\newcommand{\bF}{\mathbb{F}}
\newcommand{\bG}{\mathbb{G}}
\newcommand{\bI}{\mathbb{I}}
\newcommand{\bJ}{\mathbb{J}}
\newcommand{\bR}{\mathbb{R}}
\newcommand{\bZ}{\mathbb{Z}}
\newcommand{\bN}{\mathbb{N}}
\newcommand{\bk}{\Bbbk}
\newcommand{\cC}{\mathcal{C}}
\newcommand{\cD}{\mathcal{D}}
\newcommand{\cG}{\mathcal{G}}
\newcommand{\cI}{\mathcal{I}}
\newcommand{\cM}{\mathcal{M}}
\newcommand{\cR}{\mathcal{R}}
\newcommand{\cW}{\mathcal{W}}
\newcommand{\cX}{\mathcal{X}}
\newcommand{\fc}{\mathfrak{c}}
\newcommand{\fg}{\mathfrak{g}}
\newcommand{\fh}{\mathfrak{h}}
\newcommand{\fsl}{\mathfrak{sl}}
\newcommand{\fn}{\mathfrak{n}}
\newcommand{\frr}{\mathfrak{r}}
\newcommand{\fH}{\mathfrak{H}}
\newcommand{\tfg}{\widetilde{\mathfrak{g}}}
\newcommand{\tfn}{\widetilde{\mathfrak{n}}}
\newcommand{\fb}{\widehat{f}}
\newcommand{\ttb}{\mathtt{b}}
\newcommand{\ttL}{\mathtt{L}}
\newcommand{\bp}{\mathbf{p}}
\newcommand{\bq}{\mathbf{q}}
\newcommand{\brown}{\mathfrak{br}}
\newcommand{\sbrown}{\mathfrak{brj}}
\newcommand{\osp}{\mathfrak{osp}}
\newcommand{\supersl}[2]{{\mathfrak{sl} }(#1|#2)}
\numberwithin{equation}{section}
\newtheorem*{rep@theorem}{\rep@title}
\newcommand{\newreptheorem}[2]{%
\newenvironment{rep#1}[1]{%
 \def\rep@title{#2 \ref{##1}}%
 \begin{rep@theorem}}%
 {\end{rep@theorem}}}
\newtheorem{theorem}{Theorem}[section]
\newtheorem{proposition}[theorem]{Proposition}
\newtheorem{corollary}[theorem]{Corollary}
\newtheorem{lemma}[theorem]{Lemma}
\newtheorem{theorem*}{Theorem}
\newtheorem{claim}{Claim}
\theoremstyle{definition}
\newtheorem{definition}[theorem]{Definition}
\newtheorem{notation}[theorem]{Notation}
\newtheorem{example}[theorem]{Example}
\newtheorem{remark}[theorem]{Remark}
\let\c@equation\c@theorem  
\numberwithin{equation}{section}
\subjclass[2020]{}
\keywords{}
\begin{document}

\begin{abstract}
We describe the structure and different features of Lie algebras in the Verlinde category, obtained as semisimplification of contragredient Lie algebras in characteristic $p$ with respect to the adjoint action of a Chevalley generator. In particular, we construct a root system for these algebras that arises as a parabolic restriction of the known root system for the classical Lie algebra.  This gives a lattice grading with simple homogeneous components and a triangular decomposition for the semisimplified Lie algebra. We also obtain a non-degenerate invariant form that behaves well with the lattice grading.
As an application, we exhibit concrete new examples of Lie algebras in the Verlinde category.
\end{abstract}

\maketitle

\tableofcontents


\section{Introduction}

In recent times, symmetric tensor categories have attracted significant attention. 
The work of Deligne \cites{Del90, Del02} has been a foundational stone for this theory, and it is the major influence in the modern development of the subject. By Deligne's results, every pre-Tannakian category (that is, a symmetric tensor category with objects of finite length) of moderate growth over an algebraically closed field $\Bbbk$ of characteristic zero admits a fiber functor to $\sVec$, the category of finite-dimensional super vector spaces. In other words, any such category is equivalent to the category of representations of a (pro)algebraic supergroup.

Over a field $\Bbbk$ of characteristic $p>0$ the situation is completely different. For example, the Verlinde category $\Ver_p$, introduced in \cites{GelKaz,GeoMat} as the semisimplification of $\Rep(\bZ_p)$, does not admit a fiber functor to $\sVec$.
In \cite{Ost2}, Ostrik initiated the quest of providing analogs to Deligne's theorem in positive characteristic. Ostrik's main result states that, in characteristic $p$, any pre-Tannakian category which is fusion (thus, of moderate growth) admits a fiber functor to $\Ver_p$. This result opens the door to different directions of research concerning pre-Tannakian categories of moderate growth, such as the notion of incompressible tensor categories \cites{BEO,CEO-incomp}, necessary and sufficient conditions for such a category to fiber over $\Verp$ \cite{CEO}, and the study of affine group schemes in $\Ver_p$ \cites{Ven-GLVerp,Cou-comm}. This work focuses on the latter direction, as we study Lie algebras in $\Ver_p$.

\medbreak
Finite-dimensional Lie superalgebras over fields of characteristic zero were classified by Kac in the seventies \cite{Kac-super}. A distinguished class in the classification is that of contragredient Lie algebras. These are Lie algebras defined from a matrix by generators and relations, slightly generalizing the construction of Kac-Moody algebras but still preserving many desired properties, such as a  triangular decomposition \cite{Kac-book}.
This family includes analogs of classical Lie algebras as well as special, orthogonal, and symplectic Lie superalgebras, and some exceptions in low rank. Once again the situation is a bit more complicated for characteristic $p>0$. In this context, the classification of finite-dimensional contragredient Lie superalgebras was achieved in \cite{BGL} and contains several exceptions when $p=3,5$, partially constructed \emph{by hand}. 

A possible explanation for the existence of some of the exceptions comes from the \emph{super magic square} \cite{Cunha-Elduque}, which generalizes the Freudenthal magic square for Lie algebras. More recently a different approach was given by Kannan \cite{Kan}, who constructed these exceptional Lie superalgebras using the modern theory of symmetric tensor categories in positive characteristic. 
More precisely, Kannan showed that all the exotic examples can be obtained via the semisimplification process. The starting point is the realization of $\Ver_p$ as the semisimplication of the representation category for the commutative Hopf algebra $\bk[t]/(t^p)$, where $t$ is primitive. Thus, there is a semisimplification functor $\Rep(\bk[t]/(t^p))\to \Verp$ which is symmetric. In this way, one can obtain Lie algebras in $\Verp$ by applying that semisimplification functor to Lie algebras in $\Rep(\bk[t]/(t^p))$. Now, a Lie algebra in the latter category is just a pair $(\fg,\partial)$, where $\fg$ is a finite-dimensional Lie algebra and $\partial$ is a derivation of $\fg$ such that $\partial^p=0$. In characteristic $p=3$, we have $\Ver_3=\sVec$ so the process always gives a Lie superalgebra. For $p=5$ (and higher), we have a factorization $\Ver_5=\sVec\boxtimes\Ver_5^+$ as symmetric categories, thus the semisimplification procedure still gives Lie superalgebras after projecting to $\sVec$. Applying this construction to particular pairs $(\fg,\partial)$, where $\fg$ is a classical Lie algebra and $\partial$ is a certain inner derivation, Kannan recovered all exceptional examples, which only exist in characteristics $3$ and $5$.

\medbreak

However, the semisimplification process for general $p$ is, at least in principle, a possible source of new examples of Lie algebras in $\Ver_p$, not necessarily supported in $ \sVec$, which are much desired. 
That idea is the starting point of this work. We study the structure of Lie algebras in $\Ver_p$ obtained as the semisimplification of a pair $(\fg, \partial)$, where $\fg$ is a contragredient Lie algebra and $\partial$ is an inner derivation associated to a homogeneous element with respect to the lattice grading. This homogeneity assumption is mild enough to still produce new examples of Lie algebras in $\Verp$, yet simultaneously manageable to assure nice features on these algebras. Most importantly, we obtain a grading by a suitable free abelian group which resembles that of Lie superalgebras.
Indeed, contragredient Lie superalgebras admit a grading which leads to (generalized) root systems in the sense of \cites{Heckenberger-Yamane-root-system,HS}, see e.g. \cite{AA}. In our case, we induce a grading on the semisimplified Lie algebra in $\Ver_p$ coming from that on the original Lie algebra $\fg$: because of the choice of a homogeneous element, the \emph{root system} on the Lie algebra in $\Ver_p$ is the parabolic restriction (in the sense of \cite{Cuntz-Lentner}) of the original one. One may wonder which other Lie algebras in $\Ver_p$ have root systems. This is the content of a forthcoming paper.

\medbreak
The organization of the paper is the following. In \S \ref{sec:contragredient} we recall the construction of contragredient Lie superalgebras since they appear along the work in two ways: the particular case of Lie algebras is a source of examples to being semisimplified, and Lie superalgebras, mainly in characteristic $3$, are the images of the semisimplification functor. In particular, we recall the notion of root system, some properties, and the parabolic restriction from \cite{Cuntz-Lentner}. Later, in \S \ref{sec:Lie-algs-STC}, we recall the notions of symmetric tensor categories, Lie algebras on these categories, the semisimplification functor, and the Verlinde category $\Ver_p$.
The main results are part of \S \ref{sec:ss-Lie-algebras}. Here we consider a contragredient Lie algebra $\fg(A)$ attached to a matrix $A$ and a homogeneous element $x$. Up to an isomorphism of $\fg(A)$, we may assume that $x=e_i$, a generator of the \emph{positive} part of $\fg(A)$. We then study this case in detail, first by understanding the structure of $\fg(A)$ as a module over $\bk[t]/(t^p)$, and later by studying its image under the semisimplification functor. If $A$ is of rank $\theta$, it is known that $\fg(A)$ is $\bZ^{\theta}$-graded. We show that the semisimplification of $\fg(A)$ is $\bZ^{\theta-1}$-graded, where the grading comes from the projection which annihilates the $i$-th entry and gives consequently a grading coming from the parabolic restriction of the root system of $\fg(A)$. We finish the paper with some explicit examples in low rank.

\section{Contragredient Lie superalgebras}\label{sec:contragredient}

\subsection{Conventions}\label{subsec:conventions}
We denote by $\bN$ the set of positive integers and $\bN_0=\bN\cup \{0\}$. Given $\theta\in\bN$, let $\bI_\theta=\{1, \dots, \theta\}$. If $\theta$ is implicit we just write $\bI=\bI_\theta$; in this case, we make no distinction between $\bZ^\bI$ and $\bZ^\theta$. The canonical basis of $\bZ^\bI$ is $(\alpha_i)_{i \in \bI}$; we use the expression $1^{a_1}\dots \theta^{a_{\theta}}$ to denote
$a_1\alpha_1+\dots+a_{\theta}\alpha_{\theta}\in \mathbb{Z}^{\bI}$. For each $\beta=1^{a_1}\dots \theta^{a_{\theta}}\in\bN_0^{\theta}$, the height of $\beta$ is $\height(\beta)\coloneq a_1+\dots+a_{\theta}\in\bN_0$.

We work over an algebraically closed field $\bk$ of characteristic $p > 0$. 
We denote by $\Vec$ (respectively $\sVec$) the symmetric fusion category of finite dimensional $\bk$-vector spaces, (respectively super vector spaces).


\subsection{Root systems and Weyl groupoids}

We recall here the notions of root systems and Weyl groupoids from \cite{HS}, a generalization of the classical definitions of root systems and Weyl groups for Lie algebras.

\smallbreak
Fix a natural number $\theta$ and a nonempty set $\cX$. A \emph{semi-Cartan graph} of \emph{rank} $\theta$ and a set of \emph{points} $\cX$
is a quadruple $\cG\coloneq\cG(\bI,\cX,(A^X)_{X\in\cX},(r_i)_{i\in\bI})$, where
\begin{itemize}[leftmargin=*]
\item for each $i\in\bI$, $r_i:\cX\to\cX$ is a function such that $r_i^2=\id_{\cX}$;
\item $A^X=(a_{ij}^X)_{i,j\in\bI}\in\bZ^{\theta\times\theta}$ is a generalized Cartan matrix \cite{Kac-book} for all $X\in\cX$;

\noindent and the following identities hold:
\begin{align}\label{eq:semi-Cartan-graph-defn}
a_{ij}^X &= a_{ij}^{r_i(X)}, \qquad \text{for all }X\in\cX, i,j\in\bI.
\end{align}
\end{itemize}
The exchange graph of $\cG$ is a graph with $\cX$ as set of vertices, and an arrow labelled with $i\in\bI$ between $X$ and $r_i(X)$ if $X\ne r_i(X)$, for each $i\in\bI$ and $X\in\cX$. An example of a exchange graph and semi-Cartan graph for $i=2$ and $\cX=\{X_1,X_2,X_3\}$ is the following:
\begin{align*}
& \xymatrix{\underset{X_1}{\circ} \ar@{-}^{1}[r] & \underset{X_2}{\circ} \ar@{-}^{2}[r] & \underset{X_3}{\circ}}, &
& A^{X_1}=\begin{bmatrix} 2 & -2 \\ -2 & 2 \end{bmatrix}, &
& A^{X_2}=\begin{bmatrix} 2 & -2 \\ -1 & 2 \end{bmatrix}, &
& A^{X_3}=\begin{bmatrix} 2 & -4 \\ -1 & 2 \end{bmatrix}.
\end{align*}

\smallbreak

Given a monoid $\cM$, we may consider the small category $\cD(\cX,\cM)$ whose set of objects is $\cX$ and the set of morphisms between any two objects is $\cM$. We use the following notation:
\begin{align*}
\Hom(X,Y)&=\{(Y,f,X) | f\in\cM\} & \text{for each pair } & X,Y\in\cX.
\end{align*}
The composition is then written as follows:
\begin{align*}
(Z,f,Y)\circ (Y,g,X) &= (Z,fg,X), & &\text{for any }X,Y,Z\in\cX, f,g\in\cM.
\end{align*}

The Weyl groupoid $\cW\coloneq\cW(\bI,\cX,(A_x)_{x\in\cX},(r_i)_{i\in\bI})$ of the semi-Cartan graph $\cG$ is defined as the full subcategory of $\cD(\cX,\GL(\bZ^{\theta}))$ generated by
\begin{align*}
\sigma_i^X &\coloneq (r_i(X),s_i^X,X), && i\in\bI, X\in\cX,
\end{align*}
where $s_i^X\in\GL(\bZ^{\theta})$ is given by $s_i^X(\alpha_j)=\alpha_j-a_{ij}^X\,\alpha_i$. Notice that $\sigma_i^{r_i(X)}\sigma_i^X=(X,\id_X,X)$ for all $i\in\bI$ and $X\in\cX$, so $\cW$ is indeed a groupoid. 
\smallbreak

For a semi-Cartan graph $\cG$ as above, we define the set of \emph{real roots} of $\cG$ at $X\in\cX$ as
\begin{align*}
\varDelta^{X,\re} &\coloneq \{w(\alpha_i)\in\bZ^{\theta} | i\in\bI, Y\in\cX, (X,w,Y)\in\Hom_{\cW}(X,Y) \}.
\end{align*}
We say that $\cG$ is \emph{finite} if $\varDelta^{X,\re}$ is finite for all $X\in\cX$ (equivalently, for some $X\in\cX$).

The set of positive and negative real roots are, respectively:
\begin{align*}
\varDelta^{X,\re}_{+} &\coloneq \varDelta^{X,\re} \cap \bN_0^{\theta},  & \varDelta^{X,\re}_{-} &\coloneq \varDelta^{X,\re} \cap (-\bN_0^{\theta}).  
\end{align*}
Given $X\in\cX$, $i\ne j\in\bI$, we set $m^X_{ij} \coloneqq |\varDelta^{X,\re} \cap (\bN_0\alpha_i+\bN_0\alpha_j)|\in\bN\cup\{\infty\}$. 
We say that a semi-Cartan graph $\cG$ is a \emph{Cartan graph} if in addition the following hold:
\begin{itemize}[leftmargin=*]
\item for all $X\in\cX$, $\varDelta^{X,\re} = \varDelta^{X,\re}_{+} \cup \varDelta^{X,\re}_{-}$;
\item for all $X\in\cX$, $i\ne j\in\bI$ such that $m^X_{ij}<\infty$, we have that $(r_ir_j)^{m_{ij}^X}(X)=(X)$.
\end{itemize}

A \emph{root system} over $\cG$ is a family $\cR=(\varDelta^X)_{X\in\cX}$ of subsets $\varDelta^X\subset \bZ^{\theta}$ such that
\begin{align*}
0 & \notin \varDelta^X, & \alpha_i & \in\varDelta^X, & \varDelta^X & \subset \bN_0^{\theta}\cup(-\bN_0^{\theta}), &
s_i^X(\varDelta^X)&=\varDelta^{r_i(X)},
\end{align*}
for all $i\in\bI$ and all $X\in\cX$. Positive and negative roots are defined as usual, and $\cR$ is \emph{finite} if every $\varDelta^X$ is so.
Also, $\cR$ is \emph{reduced} if $\bZ \alpha \cap \varDelta^X=\{\pm\alpha\}$ for all $\alpha\in\varDelta^X$, $X\in\cX$.

According to the definition, a Cartan graph $\cG$ might support different root systems over it. But this is not the case when $\cG$ is finite. 

\begin{theorem}{\cite{HS}*{10.4.7}}\label{thm:finite-root-system-is-real}
If $\cG$ is a finite Cartan graph, then $\cR=(\varDelta^{X,\re})_{X\in\cX}$ is the only reduced root system over $\cG$.
\end{theorem}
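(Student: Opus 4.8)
The plan is to establish the two assertions in turn: first that $\cR=(\varDelta^{X,\re})_{X\in\cX}$ really is a reduced root system over $\cG$, and then that any reduced root system must coincide with it.

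For existence I would simply verify the four defining conditions. That $0\notin\varDelta^{X,\re}$ is immediate, since every $w$ coming from a morphism of $\cW$ lies in $\GL(\bZ^\theta)$, whence $w(\alpha_i)\neq 0$; and $\alpha_i\in\varDelta^{X,\re}$ follows by taking the identity morphism $(X,\id_X,X)$. The inclusion $\varDelta^{X,\re}\subset\bN_0^\theta\cup(-\bN_0^\theta)$ is exactly the first Cartan-graph axiom, so it costs nothing. The equivariance $s_i^X(\varDelta^{X,\re})=\varDelta^{r_i(X),\re}$ is where the groupoid structure enters: composing $\sigma_i^X=(r_i(X),s_i^X,X)$ with a morphism $(X,w,Y)$ yields $(r_i(X),s_i^Xw,Y)$, and since $\cW$ is generated by the $\sigma_j$'s this is a bijection between morphisms meeting $X$ and those meeting $r_i(X)$; applying the maps to the $\alpha_j$ gives the claimed equality. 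Finally, reducedness is free once one notes that each real root $w(\alpha_i)$ is a \emph{primitive} vector of $\bZ^\theta$, being the image of a basis vector under the lattice automorphism $w$; a primitive vector and a proper integer multiple of it cannot both be primitive, so $\bZ\alpha\cap\varDelta^{X,\re}=\{\pm\alpha\}$.

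For uniqueness, let $\cR'=(\varDelta^X)_{X\in\cX}$ be an arbitrary reduced root system over $\cG$. The inclusion $\varDelta^{X,\re}\subseteq\varDelta^X$ is the easy half: by the axiom $\alpha_j\in\varDelta^Y$ for every point $Y$, and iterating the equivariance $s_k^Y(\varDelta^Y)=\varDelta^{r_k(Y)}$ along a factorization of $w$ into the generators shows $w(\alpha_j)\in\varDelta^X$ for every morphism of $\cW$ landing at $X$; hence every real root at $X$ already lies in $\varDelta^X$. The substance is the reverse inclusion $\varDelta^X\subseteq\varDelta^{X,\re}$, which I would prove by contradiction. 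Among all pairs $(X,\beta)$ with $\beta\in\varDelta^X$ positive but \emph{not} real, pick one of minimal height. As simple roots are real, $\beta$ is not simple, so by reducedness it has a positive coordinate away from any chosen index $i$; since $s_i^X(\beta)=\beta-(A^X\beta)_i\alpha_i$ differs from $\beta$ only in the $\alpha_i$-coordinate, it still has a strictly positive coordinate and so lies in $\bN_0^\theta$, i.e. it is positive at $r_i(X)$. If some $s_i^X(\beta)$ were real, then $\beta=s_i^{r_i(X)}(s_i^X(\beta))$ would be real as well, a contradiction; thus each $s_i^X(\beta)$ is again non-real and positive, and minimality of the height forces $\height(s_i^X(\beta))\ge\height(\beta)$, that is $(A^X\beta)_i\le 0$, for every $i$, with $\beta\in\bN_0^\theta\setminus\{0\}$.

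The hard part will be ruling out the existence of such a "dominant" non-simple positive root. My expectation is to leverage finiteness of $\cG$ to conclude that each Cartan matrix $A^X$ is of finite type, so that its symmetrization $B=DA^X$ (with $D$ diagonal and positive) is positive definite; then $\beta^{\mathsf{t}}B\beta=\sum_i d_i\,b_i\,(A^X\beta)_i\le 0$ contradicts $\beta\neq 0$. The crux is therefore the finite-typeness of the $A^X$, which is exactly where the hypothesis that all $\varDelta^{X,\re}$ are finite is used essentially: for a single-vertex Cartan graph this is the classical equivalence between finiteness of the root system and positive-definiteness of the Cartan matrix, and in the general case it follows from finiteness of the Weyl groupoid, controlled in the rank-two planes by the numbers $m_{ij}^X$ appearing in the Cartan-graph axioms. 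Once the reverse inclusion is secured, combining it with the easy half yields $\varDelta^X=\varDelta^{X,\re}$ for every $X$, which is the desired uniqueness.
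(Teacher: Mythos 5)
The paper offers no proof of this statement --- it is quoted directly from \cite{HS}*{10.4.7} --- so your argument has to stand on its own. Most of it does: the verification that $(\varDelta^{X,\re})_{X\in\cX}$ is a root system is essentially correct (one small omission: for reducedness you only prove the inclusion $\bZ\alpha\cap\varDelta^{X,\re}\subseteq\{\pm\alpha\}$; you still need $-\alpha\in\varDelta^{X,\re}$, which follows by composing $w$ with one more generator, since $s_i(\alpha_i)=-\alpha_i$). The inclusion $\varDelta^{X,\re}\subseteq\varDelta^X$ for an arbitrary reduced root system is also fine, and so is your reduction of a minimal-height non-real positive root $\beta\in\varDelta^X$ to the dominance condition $(A^X\beta)_i\le 0$ for all $i$.

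The genuine gap is the step you yourself flag as the crux: it is \emph{false} that finiteness of $\cG$ forces each matrix $A^X$ to be of finite type. The paper itself contains a counterexample. For $\sbrown(2;3)$ in \Cref{ex:rk2-contr-Lie-superalgebras}(iv), the Cartan graph is finite (six positive roots at each of its three points), yet the Cartan matrix at the point $(A',\bp)$ is $C^{(A',\bp)}=\left(\begin{smallmatrix}2&-2\\-2&2\end{smallmatrix}\right)$, which is of \emph{affine} type $A_1^{(1)}$: its symmetrization is only positive semidefinite, so your inequality $\beta^{\mathsf{t}}B\beta>0$ fails. Concretely, $\beta=\alpha_1+\alpha_2$ is a positive root at $(A',\bp)$ satisfying $C^{(A',\bp)}\beta=0$, so ``dominant'' positive roots genuinely occur in finite Cartan graphs and carry no contradiction in themselves. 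This $\beta$ is of course real (as the theorem demands), but its reality is witnessed only by reflections based at the \emph{other} points of $\cX$, whose Cartan matrices differ from $C^{(A',\bp)}$: the composite along the path $(A'',\bp'')\to(A,\bp)\to(A',\bp)$ sends $\alpha_1\mapsto s_1 s_2(\alpha_1)=\alpha_1+\alpha_2$, where $s_2$ uses row $2$ of $C^{(A'',\bp'')}$ and $s_1$ uses row $1$ of $C^{(A,\bp)}$. This is exactly the feature that distinguishes Weyl groupoids from Weyl groups: positivity and reality at $X$ cannot be tested against the quadratic form of the single matrix $A^X$, so no argument local to the point $X$ can close your induction. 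The actual proof in \cite{HS} is global: it develops the length function, the exchange condition, and the longest element for the Weyl groupoid, and shows that along a reduced decomposition of the longest element every positive root of \emph{any} root system over $\cG$ appears as $s_{i_1}\cdots s_{i_{k-1}}(\alpha_{i_k})$, hence is real.
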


Finite root systems are in correspondence with crystallographic arrangements: a subset of hyperplanes in a finite-dimensional $\bR$-vector space satisfying certain properties. We refer to \cites{CH-classif,Cuntz-Lentner} for the precise definition and the correspondence. 

\medbreak

The next result about root systems will be useful throughout the article. 

\begin{theorem}{\cite{CH-rank3}*{Theorem 2.4}}\label{thm:cuntz-heck}
Let  $\gamma_1, \dots \gamma_k \in\varDelta_+^{X}$ be linearly independent roots. Then there exist $Y\in\cX$,  $w\in\Hom (X,Y)$, and $\sigma\in\mathbb{S}_{\bI}$ such that the support of $w(\gamma_i)\in\varDelta_+^{Y}$ is contained in $\{\sigma(1),\dots,\sigma(i)\}$ for each $1\leq i\leq k$. \qed
\end{theorem}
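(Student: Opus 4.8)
The plan is to pass to the geometry of crystallographic arrangements and induct on $k$. By \Cref{thm:finite-root-system-is-real} we may assume $\cR=(\varDelta^{X,\re})_{X\in\cX}$, so that every element of $\varDelta_+^X$ is a real root and $(\varDelta^X)_{X\in\cX}$ is the root system of a finite crystallographic arrangement $\cA$ in $V=\bR^\theta$. I would use the standard dictionary between $\cW$ and $\cA$ (see \cites{CH-classif,Cuntz-Lentner}): the chambers of $\cA$ are indexed by the morphisms out of $X$, and for $w\colon X\to Y$ a root $\gamma$ satisfies $w(\gamma)\in\varDelta_+^Y$ precisely when $\gamma$ is positive on the chamber $C_w$ indexed by $w$; moreover the walls of $C_w$ are the $w^{-1}(\alpha_j)$. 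In these terms the assertion becomes: find a chamber $C=C_w$ on which $\gamma_1,\dots,\gamma_k$ are all positive and whose walls can be ordered $\alpha_{\sigma(1)},\dots,\alpha_{\sigma(\theta)}$ so that $\operatorname{span}(\gamma_1,\dots,\gamma_i)$ is spanned by $\{w^{-1}(\alpha_{\sigma(1)}),\dots,w^{-1}(\alpha_{\sigma(i)})\}$ for each $i$. For $k=1$ this is just the fact that a positive real root lies in the $\cW$-orbit of a simple root, giving $w$ with $w(\gamma_1)=\alpha_{\sigma(1)}$.

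For the inductive step the heart of the matter is a sub-lemma: there is a chamber $C_0$ of $\cA$ having $\gamma_1^{\perp}$ as a wall and lying on the positive side of each of $\gamma_1,\dots,\gamma_k$; equivalently, a morphism $w_0\colon X\to Z$ with $w_0(\gamma_1)$ a simple root $\alpha_{j_1}$ and $w_0(\gamma_2),\dots,w_0(\gamma_k)\in\varDelta_+^Z$. I would establish this geometrically. The cone $R=\{x\in V:\langle\gamma_i,x\rangle>0,\ 1\le i\le k\}$ is nonempty, as it contains the fundamental chamber at $X$. Linear independence of $\gamma_1,\dots,\gamma_k$ makes the evaluation map $x\mapsto(\langle\gamma_1,x\rangle,\dots,\langle\gamma_k,x\rangle)$ surjective, so $\gamma_1^{\perp}$ meets $\overline{R}$ in a genuine facet $F$. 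Choosing $z$ in the relative interior of $F$ and generic on $\gamma_1^{\perp}$, and then pushing $z$ slightly into $R$, lands in a chamber $C_0\subseteq R$ one of whose walls is $\gamma_1^{\perp}$; reading $C_0$ back through the dictionary produces $w_0$.

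Finally I would restrict $\cA$ to the wall $H=\gamma_1^{\perp}$ (equivalently $\alpha_{j_1}^{\perp}$ at $Z$). By \cite{Cuntz-Lentner} the induced arrangement $\cA^{H}$ is again finite crystallographic of rank $\theta-1$, and its Weyl groupoid is the parabolic restriction of $\cW$ obtained by the projection killing the $\alpha_{j_1}$-coordinate. The images of $w_0(\gamma_2),\dots,w_0(\gamma_k)$ in $\cA^{H}$ are positive roots there, and they stay linearly independent because none of them is a multiple of $\alpha_{j_1}=w_0(\gamma_1)$. The inductive hypothesis applied to these $k-1$ roots yields a morphism and a permutation of $\bI\setminus\{j_1\}$ triangulating them; lifting the morphism along the parabolic restriction, composing with $w_0$, and prepending $\sigma(1)=j_1$ assembles the required $w$ and $\sigma$. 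I expect the genuine difficulties to be precisely the sub-lemma and this last bookkeeping step: one must verify that positivity and, above all, the support conditions pass faithfully between $\cA$ and $\cA^{H}$, i.e.\ that a root is supported on $\{j_1\}\cup J'$ downstairs exactly when its restriction is supported on $J'$, so that lifting does not create support outside $\{j_1\}\cup J'$.
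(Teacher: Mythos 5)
The first thing to say is that the paper contains no proof of this statement: it is imported verbatim from \cite{CH-rank3}*{Theorem 2.4} and stamped with a QED in the statement itself, so there is no internal argument to compare yours against. Judged on its own, your geometric route through the correspondence with crystallographic arrangements is a legitimate and essentially correct strategy (and is independent of the original treatment in \cite{CH-rank3}). Your base case and your sub-lemma are fine as written: the point $z\in\gamma_1^{\perp}$ with $\langle\gamma_i,z\rangle>0$ for $i\ge 2$ exists by surjectivity of the evaluation map, genericity pushes it off all other hyperplanes, and the perturbed point lies in a chamber $C_0$ that is entirely contained in $R$ because each $\gamma_i^{\perp}$ is itself a hyperplane of $\cA$, so the signs of the $\gamma_i$ are constant on chambers. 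One small repair: the linear independence of the projected roots does not follow from ``none of them is a multiple of $\alpha_{j_1}$''; the correct reason is that $\ker\pi_{j_1}=\bR\alpha_{j_1}$ and $\alpha_{j_1}=w_0(\gamma_1),w_0(\gamma_2),\dots,w_0(\gamma_k)$ are independent, so no nontrivial combination of $w_0(\gamma_2),\dots,w_0(\gamma_k)$ lies in the kernel.

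The step you explicitly leave open — that supports pass faithfully between $\cA$ and $\cA^{H}$ — is the genuine crux, and it does hold; here is the verification that completes your induction. Work at $Z$, with $H=\alpha_{j_1}^{\perp}$ and $\beta_i=w_0(\gamma_i)$. Since crystallographic arrangements are simplicial, the chamber $C=C_u$ of $\cA$ on the positive side of $H$ whose facet is the target chamber $D$ of $\cA^{H}$ has exactly $\theta$ walls, with normals $u^{-1}(\alpha_l)$, $l\in\bI$, one of which is $\alpha_{j_1}=u^{-1}(\alpha_{\sigma(1)})$; by simpliciality the walls of $D$ inside $H$ are exactly $H\cap(u^{-1}\alpha_l)^{\perp}$ for $l\ne\sigma(1)$, so (identifying functionals on $H$ with the image of $\pi_{j_1}$) the simple roots of $\cA^{H}$ at $D$ are the primitive vectors along $\pi_{j_1}(u^{-1}\alpha_l)$, $l\ne\sigma(1)$, each positive on $D$. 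Now write $u(\beta_i)=\sum_{l}c_l\alpha_l$, i.e.\ $\beta_i=\sum_l c_l\,u^{-1}(\alpha_l)$; applying $\pi_{j_1}$ kills exactly the $l=\sigma(1)$ term, giving $\pi_{j_1}(\beta_i)=\sum_{l\ne\sigma(1)}c_l\,\pi_{j_1}(u^{-1}\alpha_l)$. Hence the support of the restricted root $\pi_{j_1}(\beta_i)'$ in the simple basis at $D$ equals the support of $u(\beta_i)$ at $Y$ with $\sigma(1)$ deleted (passing to primitive vectors only rescales by positive rationals), which is exactly the dictionary you needed; positivity of $u(\beta_i)$ also passes, since points of $C$ near $D$ keep each $\beta_i$ positive and signs are constant on chambers. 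Two hypotheses you should make explicit: the dictionary between morphisms out of an object and chambers only needs to be surjective, which follows from connectedness of the chamber graph (so simple connectedness of the Cartan graph is not an issue), and finiteness of the Cartan graph must be assumed throughout — it is implicit in the statement, is required by \Cref{thm:finite-root-system-is-real} and \Cref{lem:Cuntz-Lentner}, and matches the hypotheses of \cite{CH-rank3}.
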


In other words, this result allows us to reduce computations on a set of $k$ linearly independent roots to computations on a root system of rank $k$, obtained as a subsystem of another object in the Weyl class. Moreover, notice that
\begin{align*}
w(\gamma_1)&=\alpha_{\sigma(1)} & \text{ since }\  w(\gamma_1)&\in \bZ\alpha_{\sigma(1)} \cap \varDelta_+^{Y} =\{\alpha_{\sigma(1)} \}.
\end{align*}

\medbreak

There are some standard constructions that produce new finite root systems from old ones, as described in \cite{Cuntz-Lentner}. More relevant for us is the restriction construction; at the level of arrangements, this process fixes a root and then projects all the hyperplanes in the arrangement onto its orthogonal component. We are interested in the description of the associated root system. First, we fix some notation:
\begin{itemize}[leftmargin=*]
\item Let $i\in\bI_{\theta}$. We denote by $\pi_{i}\colon\bZ^{\theta}\to\bZ^{\theta-1}$ the projection given by 
\begin{align*}
\pi_{i}(a_1,\dots,a_{\theta})&=(a_1,\dots,a_{i-1},a_{i+1},\dots,a_{\theta}), & & a_i\in\bZ.
\end{align*}
\item For each $\beta=(b_1,\cdots,b_{\theta-1})\in \bZ^{\theta-1}$, let $\beta'=\tfrac{1}{\gcd(b_i|i\in \bI_{\theta-1})}\beta$.
\end{itemize}

\begin{lemma}{\cite{Cuntz-Lentner}*{3.3}}\label{lem:Cuntz-Lentner}
Let $\cR=(\cC,(\varDelta^a)_{a\in A})$ be a finite root system of rank $\theta$. Fix $a\in A$ and $i\in\bI_{\theta}$. 
Then $\overline{\varDelta}:=\{\pi_i(\alpha)' | \alpha\in\varDelta^a \}$ is the set of roots corresponding to the restriction 
of the hyperplane arrangement of $\cR$ to $\alpha_i^{\perp}$.\qed
\end{lemma}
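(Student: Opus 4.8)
The plan is to unwind the dictionary between finite root systems and crystallographic arrangements and then track exactly what the restriction operation does to normal vectors. Recall that the arrangement attached to $\varDelta^a$ is $\cA := \{\alpha^\perp \mid \alpha \in \varDelta^a\}$, a finite set of hyperplanes in $\bR^\theta$, where $\alpha^\perp = \{v \in \bR^\theta \mid \langle \alpha, v\rangle = 0\}$ for the standard pairing (so $\alpha^\perp=(-\alpha)^\perp$ and hyperplanes are indexed by roots up to sign); conversely, the roots of a crystallographic arrangement are recovered as the primitive integral normal vectors of its hyperplanes, taken up to sign. The first thing I would record is that, since $\alpha_i$ is the $i$-th canonical basis vector of $\bZ^\theta$, the hyperplane $\alpha_i^\perp$ is exactly the coordinate hyperplane $\{v \mid v_i = 0\}$, and the projection $\pi_i$ restricts to a lattice isomorphism $\pi_i\colon \alpha_i^\perp \cap \bZ^\theta \isomorph \bZ^{\theta-1}$ (and to a linear isomorphism after $\otimes\,\bR$).

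Next I would compute the restriction of $\cA$ to $W := \alpha_i^\perp$ directly. By definition this is the arrangement in $W$ whose hyperplanes are $\alpha^\perp \cap W$ for those $\alpha \in \varDelta^a$ with $\alpha^\perp \neq W$. For such an $\alpha = (a_1, \dots, a_\theta)$ we have
\[
\alpha^\perp \cap W = \Big\{ v \in \bR^\theta \mid v_i = 0, \ \textstyle\sum_{j \neq i} a_j v_j = 0 \Big\},
\]
so under the identification $\pi_i$ this subspace is cut out inside $\bR^{\theta-1}$ by the functional $\pi_i(\alpha) = (a_j)_{j \neq i}$. Hence the hyperplane $\alpha^\perp \cap W$ has primitive integral normal vector $\pi_i(\alpha)'$, and letting $\alpha$ range over $\varDelta^a$ produces precisely the set $\overline{\varDelta} = \{\pi_i(\alpha)' \mid \alpha \in \varDelta^a\}$ of normals, which is the asserted root set.

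Two small points remain, and I would settle them before concluding. First, the geometric description of the restriction as an orthogonal projection must be reconciled with the coordinate formula: because $\alpha_i=e_i$ and $\langle \alpha_i,\alpha_i\rangle = 1$, the orthogonal projection of $\alpha$ onto $W$ is $\alpha - a_i\,\alpha_i = (a_1,\dots,a_{i-1},0,a_{i+1},\dots,a_\theta)$, which is exactly $\pi_i(\alpha)$ read through the isomorphism above, so the two descriptions of $\overline{\varDelta}$ agree. Second, one checks that $\pi_i(\alpha)=0$ occurs only for $\alpha = \pm\alpha_i$: indeed $\pi_i(\alpha)=0$ forces $\alpha \in \bZ\alpha_i$, and reducedness of $\cR$ gives $\bZ\alpha_i \cap \varDelta^a = \{\pm\alpha_i\}$; these are precisely the roots whose hyperplane equals $W$, and so are correctly excluded from the restricted arrangement. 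The hard part is not this bookkeeping but the structural fact underlying the statement, namely that the restriction of a crystallographic arrangement is again crystallographic, so that $\overline{\varDelta}$ is genuinely the root set of a finite root system; I would invoke the correspondence and classification of \cite{Cuntz-Lentner} for this, or, if a self-contained argument were desired, verify the root-system axioms for $\overline{\varDelta}$ by hand, using \Cref{thm:cuntz-heck} to reduce the needed rank-two computations to subsystems living at other objects of the Weyl class.
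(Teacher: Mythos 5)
Your proposal is correct as far as it goes, but note what the paper actually does here: Lemma \ref{lem:Cuntz-Lentner} is stated with an immediate \qed, i.e.\ it is quoted wholesale from \cite{Cuntz-Lentner}*{3.3} with no proof given. So there is no internal argument to compare against; what you have written is a supplement to the paper rather than an alternative to anything in it. Your bookkeeping is sound: the identification of $\alpha_i^\perp\cap\bZ^\theta$ with $\bZ^{\theta-1}$ via $\pi_i$, the computation showing that the hyperplane $\alpha^\perp\cap\alpha_i^\perp$ is cut out by the functional $\pi_i(\alpha)$ (equivalently, that the normal of the intersected hyperplane is the orthogonal projection $\alpha-a_i\alpha_i$ of the normal, since $\langle\alpha,v\rangle=\langle\operatorname{proj}_{W}\alpha,v\rangle$ for $v\in W$), and the observation that $\pi_i(\alpha)=0$ precisely for $\alpha=\pm\alpha_i$ by reducedness, which are exactly the roots excluded from the restricted arrangement (and for which $\pi_i(\alpha)'$ would be undefined). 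You also correctly isolate the one genuinely nontrivial input --- that the restriction of a crystallographic arrangement is again crystallographic, so that the set of primitive normals $\overline{\varDelta}$ is honestly the root set of a finite root system --- and you defer it to \cite{Cuntz-Lentner}, which is exactly what the paper does for the entire statement. The only weak spot is your closing suggestion that one could instead verify the root-system axioms for $\overline{\varDelta}$ by hand using Theorem \ref{thm:cuntz-heck}: that theorem concerns roots of a fixed Weyl groupoid root system and does not obviously provide the Cartan graph structure on the restricted set; making that alternative precise is essentially the content of the Cuntz--Lentner construction, so it should not be presented as routine. Since you flag it only as an option and your main route relies on the cited correspondence, the proposal stands.
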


The classification of finite root systems was achieved in \cite{CH-classif}. Roughly speaking, there exist families of arbitrary rank corresponding to Lie superalgebras and Lie algebras of types $A,B,C,D$, infinite examples in rank two corresponding to triangulations of $n$-agons, and several exceptions in ranks $3\le \theta\le 8$.
According to \cite{Cuntz-Lentner}*{Theorem 3.7}, most of these root systems come from a classical one by restriction.

\subsection{Contragredient Lie superalgebras}\label{subsec:Weyl-gpd} 
Here we recall the construction of contragredient Lie superalgebras over $\Bbbk$ from \cites{AA-super, BGL, Kac-book} and introduce notation.
We fix the following \emph{contragredient data}:

\begin{enumerate}[leftmargin=*,label=\rm{(D\arabic*)}]
\item\label{item:contragredient-1} a matrix $A=(a_{ij})\in\Bbbk^{\bI\times\bI}$;
\item \label{item:contragredient-2} a \emph{parity vector} $\bp=(p_i)\in \bG_2^\bI$\footnote{For our formulas it is more suitable to work with $\bG_2=\{\pm 1\}$ than $\bZ/2$.};
\item \label{item:contragredient-3} a $\Bbbk$-vector space $\fh$ of dimension $2\theta-\rk A$;
\item \label{item:contragredient-4} linearly independent subsets $(\xi_i)_{i\in \bI} \subset \fh^*$ and $(h_i)_{i\in \bI} \subset \fh$ \emph{realizing} the matrix $A$, that is,  $\xi_j(h_i)=a_{ij}$ for all $i,j\in\bI$.
\end{enumerate}
The set $(h_i)_{i\in \bI}$ in \ref{item:contragredient-4} is completed to a basis $(h_i)_{1\leq i\leq2\theta-\rk A}$ of $\fh$.
The Lie superalgebra $\tfg:=\tfg(A,\bp)$ is presented by generators $e_i$, $f_i$, $i\in \bI$, and $\fh$,
with parity given by
\begin{align*}
|e_i|&=|f_i|= \vert i \vert,\quad  i\in\bI, & |h|&=0, \mbox{ for all }h\in\fh,
\end{align*}
subject to the relations, for all $i, j \in \bI$, $h, h'\in \fh$:
\begin{align}
\label{eq:relaciones gtilde}
[h,h']&=0, &[h,e_i] &= \xi_i(h)e_i, & [h,f_i] &= -\xi_i(h)f_i, & [e_i,f_j]&=\delta_{ij}h_i.
\end{align}

There is a unique $\bZ$-grading $\tfg = \mathop{\oplus}\limits_{k\in \bZ}\tfg_k$ such that $e_i\in \tfg_1$, $f_i \in \tfg_{-1}$, $\fh = \tfg_0$. As usual, we denote $\tfn_+= \mathop{\oplus}\limits_{k>0}\tfg_k$ and $\tfn_-= \mathop{\oplus}\limits_{k<0}\tfg_k$.
The family of $\bZ$-homogeneous ideals trivially intersecting  $\fh$ admits a unique maximal ideal $\frr$; clearly $\frr$ splits as a sum of its positive and negative parts: $\frr=\frr_+\oplus \frr_-$. 

The \emph{contragredient Lie superalgebra} associated to the pair $(A,\bp)$ is the Lie superalgebra quotient $\fg(A,\bp):= \tfg(A,\bp) / \frr$. 
For sake of brevity, set $\fg:=\fg(A,\bp)$, still denote by $e_i$, $f_i$, $h_i$ the images in $\fg$ of the generators for $\tfg$, and identify $\fh$ with its image in $\fg$.
By homogeneity, the grading of $\tfg$ induces one in $\fg = \mathop{\oplus}\limits_{k\in \bZ}\fg_k$; thus we have 
$\fg=\fn_+\oplus\fh\oplus\fn_-$ as usual.

\begin{remark} \label{rem:derivedsubalgebra}
The Lie subsuperalgebra $\fg' = [\fg, \fg]$ admits a complement  $\fg=\fg'\oplus \fh_{>\theta}$, where $\fh_{>\theta}$ is the subspace spanned by all $h_j$ with $j>\theta$. In particular, $\fg=\fg'$ if $A$ is non-degenerate. 

As in \cite{Kan}, it will be easier to work with $\fg'$ rather than $\fg$, because the former is generated by the Chevalley generators. 
We denote by $\fh_{\leq \theta}$  the subspace of $\fh$ spanned by all $h_j$ with $j\leq\theta$, thus we have $\fg'=\fn_+\oplus \fh_{\leq \theta} \oplus\fn_-$.
\end{remark}

\begin{remark}
Given $i\in\bI$, denote by $\fg_i$ the Lie subsuperalgebra of $\fg$ generated by $e_i$, $f_i$ and $h_i$. By \cite{BGL}, if a matrix $B$ is obtained from $A$ by rescaling some rows by non-zero scalars, then $\fg(A,\bp)\simeq \fg(B,\bp)$.  Following the convention adopted in \cite{HoytSer}, we always assume that $a_{ii} \in \{0, 2\}$. Hence, for $\fg_i$ we have four possibilities:
\begin{itemize}[label=$\diamondsuit$]
\item $a_{ii}=2$, $p_i=1$: as usual, $\fg_i \simeq \fsl(2)$;
\item $a_{ii}=0$, $p_i=1$: now $\fg_i$ is isomorphic to $ \fH_3$, the Heisenberg algebra;
\item $a_{ii}=2$, $p_i=-1$: here $\fg_i \simeq \osp(2)$;
\item $a_{ii}=0$, $p_i=-1$: in this case $\fg_i$ is isomorphic to $ \supersl{1}{1}$.
\end{itemize}

\end{remark}

Now we recall some features of $\fg$ extracted from \cite{AA-super}:
\begin{enumerate}[leftmargin=*,label=\rm{(\roman*)}]
\item There exists an involution $\widetilde\omega$ of $\tfg$ such that
\begin{align}\label{eq:Chevalley involution}
\widetilde\omega(e_i)&=-f_i, & \widetilde\omega(f_i)&=-p_ie_i, & \widetilde\omega(h)&=-h, & &\mbox{ for all } i\in\bI, h\in\fh.
\end{align}
Clearly $\widetilde\omega(\frr)=\frr$, so $\widetilde\omega$ gives rise to $\omega:\fg \to \fg$, the \emph{Chevalley involution}.

\item The center $\fc$ of $\fg$ coincides with that of $\fg'$; we have $\fc=\{ h\in\fh: \xi_i(h)=0 \mbox{ for all }i\in \bI \}$.

\item The unique $x\in\fn_+$ (respectively, $x\in\fn_-$) such that $[x,f_i]=0$ (respectively, $[x,e_i]=0$) for all $i\in\bI$ is $x=0$.

\item The Lie superalgebra $\tfg$ has a  $\bZ^{\bI}$-grading determined by
\begin{align*}
\deg f_i&=-\alpha_i, & \deg h&=0, & \deg e_i&=\alpha_i,& \mbox{ for all } i&\in \bI, \ h\in \fh.
\end{align*}
One can show that $\frr$ is $\bZ^{\bI}$-homogeneous, hence $\fg=  \fh \oplus \mathop{\oplus}\limits_{\alpha\in\bZ^{\bI}, \alpha\neq 0} \fg_{\alpha}$.
\end{enumerate}

\begin{definition}
The set of roots of $(A, \bp)$ is $\nabla^{(A,\bp)}:=\{\alpha\in\bZ^{\bI} - 0:\fg_\alpha\neq0\}$. 
The set of positive, respectively negative, root is $\nabla_{\hspace{2pt} \pm}^{(A,\bp)}=\nabla^{(A,\bp)}\cap(\pm\bN^{\bI}_0)$.
\end{definition}

Since the Chevalley involution satisfies $\omega(\fg_{\alpha})=\fg_{-\alpha}$ for any $\alpha \in \bZ^\bI$, we get 
\begin{align}\label{eq:prop1 roots}
\nabla^{(A,\bp)}&=\nabla_{\hspace{2pt} +}^{(A,\bp)}\cup\nabla_{\hspace{1pt} -}^{(A,\bp)}, & \nabla_{\hspace{2pt} -}^{(A,\bp)}&= -\nabla_{\hspace{2pt} +}^{(A,\bp)}.
\end{align}

\begin{remark}\label{rem:subset-subalgebra-contragredient}
Given contragredient data \ref{item:contragredient-1}-\ref{item:contragredient-4} and $\bJ \subset \bI$, consider
\begin{itemize}
\item $A_\bJ:=(a_{ij})_{i,j\in\bJ}$, $\bp_\bJ:=(p_i)_{i\in\bJ}$;
\item $\fg_\bJ$ the subalgebra of $\fg=\fg(A,\bp)$ generated by $\fh$, $e_i$, $f_i$, for $i\in\bJ$;
\item $\fh_\bJ$ the subspace of $\fh$ spanned by $h_i$, for $i\in\bJ$;
\item $\fh'_\bJ$ the maximal subspace of $\cap_{i\in\bJ}\ker\xi_i$ trivially intersecting $\fh_\bJ$.
\end{itemize}

By \cite{HoytSer}*{Lemma 2.1}, there is an isomorphism $\fg(A_\bJ,\bp_\bJ)\oplus\fh'_\bJ\cong\fg_\bJ$ which identifies the positive and negative parts of $\fg(A_\bJ,\bp_\bJ)$ with the subalgebras of $\fg(A,\bp)$ generated by $e_i$, respectively $f_i$, for $i \in \bJ$. In particular, we get $\nabla_{\hspace{2pt} \pm}^{(A_\bJ,\bp_\bJ)}=\nabla_{\hspace{2pt} \pm}^{(A,\bp)}\cap\bZ^\bJ$.
\end{remark}

\subsection{Root systems for contragredient Lie superalgebras}\label{subsec:groupoid}
Next we extract from \cite{AA-super} the construction of the Weyl groupoid associated to a contragredient Lie superalgebra.  Let $(A,\bp)$ as in \ref{item:contragredient-1}, \ref{item:contragredient-2}. Following \cites{Kac-book,BGL}, we assume from now on that $A$ satisfies
\begin{align}\label{eq:symmetrizable}
&a_{ij}=0\mbox{ if and only if }a_{ji}=0, & \mbox{for all }j&\neq i\in \bI.
\end{align}

From $(A,\bp)$, we build:
\begin{enumerate}[leftmargin=*,label=\rm{(\arabic*)}]
\item A matrix $C^{(A,\bp)}=\left(c_{ij}^{(A,\bp)}\right)_{i,j\in\bI}$  by $c_{ii}^{(A,\bp)}:= 2$ for each $i\in \bI$ and
\begin{align}\label{eq:definition C}
c_{ij}^{(A,\bp)}&:=-\min\{m\in\bN_0:(\ad f_i)^{m+1} f_j= 0 \},& i&\neq j \in \bI.
\end{align}
Since $\ad f_i$ is locally nilpotent, $C^{(A,\bp)}$ is a well-defined generalized Cartan matrix.
\item For each $i\in \bI$, an involution $s_i^{(A,\bp)}\in \GL(\bZ^{\bI})$ given by
\begin{align}\label{eq:definition si}
s_i^{(A,\bp)}(\alpha_j)&:= \alpha_j-c_{ij}^{(A,\bp)}\alpha_i, & j\in\bI.
\end{align}
\item For each $i\in \bI$, the  $i$-th \emph{reflection} $ r_i(A, \bp) := ( r_i A,  r_i\bp)=\left((\widehat{a}_{jk})_{j,k\in \bI}, (\widehat{p}_j)_{j\in\bI} \right)$, where 
\begin{itemize}[leftmargin=*]
\item$\widehat{p}_j = p_jp_i^{c_{ij}^{(A,\bp)}}$, for all $j \in \bI$;

\item for $j=i$, let $\widehat{a}_{jk}= c_{ik}^{(A,\bp)}a_{ii}-a_{ik}$, for all $k\in\bI$;

\item for $j\ne i$ with $a_{ij}=0$, let $\widehat{a}_{jk}=a_{jk}$, for all $k\in\bI$;

\item for $j\ne i$ with $a_{ij}\ne0$, put $\widehat{a}_{ji}=a_{ji}(c_{ij}^{(A,\bp)}a_{ii}-a_{ij})$, and for all $k\neq i,j$,
\begin{equation}\label{eq:ajk-barra cuando aij neq0}
\widehat{a}_{jk}=c_{ij}^{(A,\bp)}c_{ik}^{(A,\bp)}a_{ji}a_{ii}-c_{ij}^{(A,\bp)}a_{ji}a_{ik}-c_{ik}^{(A,\bp)}a_{ji}a_{ij}+a_{ij}a_{jk}.
\end{equation}

\end{itemize}
\end{enumerate}

\begin{remark}\label{rem:cij-explicit-formula}
For each $a\in\bF_p$, $\widetilde{a}\in\bZ$ denotes the unique integer $0\geq \widetilde{a}\geq1-p$ whose class in $\bF_p$ is $a$.
By \cite{AA-super} the integers $c_{ij}^{(A,\bp)}$ can be explicitly computed as follows.
\begin{enumerate}[leftmargin=*,label=\rm{(\alph*)}]
\item If $a_{ii}=2$ and $a_{ij}\in\bF_p$, then
$$ c_{ij}^{(A,\bp)}=\left\{ \begin{array}{ll} \widetilde{a}_{ij}, &  a_{ij}\in\bF_p, p_i=1\mbox{ or }p_i=-1, \widetilde{a}_{ij}\mbox{ even};
\\ \widetilde{a}_{ij}-p, & a_{ij}\in\bF_p, p_i=-1, \widetilde{a}_{ij}\mbox{ odd};
\\ 1-\frac{3-p_i}{2}p, & a_{ij}\notin\bF_p.
\end{array}\right.  $$

\item If $a_{ii}=0$, then
$$ c_{ij}^{(A,\bp)}=\left\{ \begin{array}{ll} 0, & a_{ij}=0; \\ 1-p, & a_{ij}\neq 0, \, p_i=1; \\ -1,
& a_{ij}\neq 0, \, p_i=-1. \end{array}\right.  $$
\end{enumerate}
\end{remark}

Another crucial result of \cite{AA-super} states that, in this context, we have analogues for Lusztig's isomorphisms on quantum groups.

\begin{theorem}\label{thm:isomorfismo Ti} Let $A\in \Bbbk^{\bI\times \bI}$  satisfying \eqref{eq:symmetrizable},
$\bp\in (\bG_2)^{\bI}$, and $i\in \bI$.
There is a Lie superalgebra isomorphism  $T_i^{(A,\bp)}\colon \fg( r_i A, r_i\bp)\to\fg(A,\bp)$ such that
\begin{align}
T_i^{(A,\bp)}\left(\fg( r_i A, r_i\bp)_\beta\right) &= \fg(A,\bp)_{s_i^{(A,\bp)}(\beta)}, & \mbox{for all }&\beta\in\pm\bN_0^\bI;
\label{eq:Ti mueve grados via si}
\\
T_i^{(A,\bp)}\circ\omega&=\omega\circ T_i^{(A,\bp)}.
\label{eq:Ti conmunta con w}
\end{align}
\end{theorem}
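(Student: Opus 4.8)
The plan is to construct $T_i^{(A,\bp)}$ explicitly on the Chevalley generators of $\fg(r_iA,r_i\bp)$, following the strategy for Lusztig-type reflections, then prove it is a surjective graded homomorphism onto $\fg(A,\bp)$ and finally deduce invertibility and the two compatibilities by a symmetry argument. Write $(B,\bq)\coloneq(r_iA,r_i\bp)$ and denote the generators of $\fg(B,\bq)$ by $E_j,F_j$ and $\fh$, keeping $e_j,f_j,h$ for $\fg(A,\bp)$. Since $\ad f_i$ is locally nilpotent on $\fg(A,\bp)$ (as noted after \eqref{eq:definition C}) and $\ad e_i$ is conjugate to $\ad f_i$ via $\omega$ up to a sign by \eqref{eq:Chevalley involution}, both operators are locally nilpotent, so the following assignment makes sense:
\begin{align*}
T_i(E_i)&=\lambda_i\, f_i, & T_i(F_i)&=\mu_i\, e_i, \\
T_i(E_j)&=\lambda_j\,(\ad e_i)^{-c_{ij}^{(A,\bp)}}(e_j), & T_i(F_j)&=\mu_j\,(\ad f_i)^{-c_{ij}^{(A,\bp)}}(f_j),
\end{align*}
for $j\neq i$, while $T_i$ restricts on $\fh$ to the reflection induced by $s_i^{(A,\bp)}$; the nonzero scalars $\lambda_j,\mu_j$ will be pinned down below. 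Because $E_j$ sits in degree $\alpha_j$ and $(\ad e_i)^{-c_{ij}^{(A,\bp)}}(e_j)$ sits in degree $\alpha_j-c_{ij}^{(A,\bp)}\alpha_i=s_i^{(A,\bp)}(\alpha_j)$, and since $T_i$ is to be multiplicative while $s_i^{(A,\bp)}$ is linear, the grading statement \eqref{eq:Ti mueve grados via si} will hold automatically once $T_i$ is known to be a homomorphism.

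The core step is to verify that these images satisfy the defining relations \eqref{eq:relaciones gtilde} of $\tfg(B,\bq)$, so that the universal property yields a homomorphism $\phi\colon\tfg(B,\bq)\to\fg(A,\bp)$. The purely Cartan relations and the weight relations $[H,E_j]=\cdots$, $[H,F_j]=\cdots$ reduce to the compatibility of the reflected realization of $B$ with $s_i^{(A,\bp)}$, which is routine weight bookkeeping. The delicate relation is $[E_j,F_k]=\delta_{jk}H_j$. The cases with $i\in\{j,k\}$ follow at once from the defining property of $c_{ij}^{(A,\bp)}$ in \eqref{eq:definition C} and its image under $\omega$: e.g. $[T_i(E_j),T_i(F_i)]$ is proportional to $(\ad e_i)^{-c_{ij}^{(A,\bp)}+1}(e_j)=0$. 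The remaining case $j,k\neq i$ is the \emph{main obstacle}: one must compute $[(\ad e_i)^{-c_{ij}}(e_j),(\ad f_i)^{-c_{ik}}(f_k)]$ and show it equals $\delta_{jk}$ times the reflected Cartan element. This is exactly where the four isomorphism types of the rank-one subalgebra $\fg_i\in\{\fsl(2),\fH_3,\osp(2),\supersl{1}{1}\}$ intervene: viewing $\fg(A,\bp)$ as a locally finite $\fg_i$-module, the bracket is governed by the representation theory of these algebras, by the parity signs of the super bracket, and by the characteristic-$p$ truncation of the $\fg_i$-strings, with the explicit values of $c_{ij}^{(A,\bp)}$ in Remark~\ref{rem:cij-explicit-formula} keeping everything finite. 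Once the relations hold, $\phi$ is graded with $\phi|_\fh$ an isomorphism; applying $\ad f_i$-strings to $(\ad e_i)^{-c_{ij}}(e_j)$ recovers each $e_j$ up to a nonzero scalar, so $\phi$ is surjective. Then $\phi(\frr^{(B,\bq)})$ is a graded ideal of $\fg(A,\bp)$ supported in nonzero degrees, hence meeting $\fg(A,\bp)_0=\fh$ trivially; as $\fg(A,\bp)$ carries no nonzero such ideal, $\phi(\frr^{(B,\bq)})=0$ and $\phi$ descends to the desired $T_i^{(A,\bp)}\colon\fg(B,\bq)\to\fg(A,\bp)$.

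It remains to prove invertibility and \eqref{eq:Ti conmunta con w}. Since the reflection on contragredient data is involutive, so that $r_i(B,\bq)=(A,\bp)$ and $s_i^{(B,\bq)}=s_i^{(A,\bp)}$, the same construction applied to $(B,\bq)$ produces a homomorphism $T_i^{(B,\bq)}\colon\fg(A,\bp)\to\fg(B,\bq)$. Using the string identity $(\ad f_i)^{-c_{ij}}(\ad e_i)^{-c_{ij}}(e_j)\in\bk^{\times}e_j$ one checks that $T_i^{(B,\bq)}\circ T_i^{(A,\bp)}$ and $T_i^{(A,\bp)}\circ T_i^{(B,\bq)}$ act as the identity on all Chevalley generators, provided the scalars $\lambda_j,\mu_j$ are normalized compatibly; as both composites are homomorphisms, they are the identity, so $T_i^{(A,\bp)}$ is an isomorphism. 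For \eqref{eq:Ti conmunta con w}, the same scalars are fixed so that the identity holds on generators: applying $\omega$ to the defining formulas and using $\omega(e_i)=-f_i$, $\omega(f_i)=-p_ie_i$ together with $\omega\bigl((\ad e_i)^{m}(e_j)\bigr)=(-1)^{m+1}(\ad f_i)^{m}(f_j)$ reproduces the formulas defining $T_i$ on $\omega(E_j),\omega(F_j)$, whence $T_i\circ\omega=\omega\circ T_i$ on generators and therefore everywhere.
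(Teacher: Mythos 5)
First, a point of comparison: the paper does not prove this theorem at all --- it is quoted from \cite{AA-super} --- so your proposal can only be measured against the standard construction of Lusztig-type reflections, which is indeed the strategy of that reference and the one you follow (define $T_i$ on Chevalley generators by $E_i\mapsto f_i$, $E_j\mapsto (\ad e_i)^{-c_{ij}}e_j$, verify the relations \eqref{eq:relaciones gtilde}, descend from $\tfg$ to $\fg$ via maximality of $\frr$, and get injectivity/$\omega$-compatibility from graded-ideal and generator arguments). The architecture of your argument is sound, and several of its pieces are genuinely correct as sketched: the descent of $\phi$ through $\frr^{(B,\bq)}$, the surjectivity via the string identity, the graded-ideal argument for injectivity, and the scalar bookkeeping for \eqref{eq:Ti conmunta con w} (your sign $\omega\bigl((\ad e_i)^m e_j\bigr)=(-1)^{m+1}(\ad f_i)^m f_j$ is right, and it forces exactly $\widehat p_j=p_jp_i^{c_{ij}}$, matching the paper's definition of $r_i\bp$).

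The genuine gap is that you never execute the one computation that carries the content of the theorem: showing that
\begin{align*}
X_j \coloneqq \bigl[(\ad e_i)^{-c_{ij}^{(A,\bp)}}e_j,\ (\ad f_i)^{-c_{ij}^{(A,\bp)}}f_j\bigr]\in\fh
\end{align*}
is a nonzero element of $\fh$ whose pairing $(\xi_k-c_{ik}^{(A,\bp)}\xi_i)(X_j)$ reproduces, up to the normalizing scalars, the \emph{explicit} entries $\widehat a_{jk}$ of $r_iA$, including the case analysis \eqref{eq:ajk-barra cuando aij neq0}. Your proof nowhere uses the defining formulas of $r_iA$, yet the theorem asserts an isomorphism with $\fg(r_iA,r_i\bp)$ for that specific matrix; the formulas for $\widehat a_{jk}$ are precisely the output of this computation (done case-by-case over the four types of $\fg_i$ and the characteristic-$p$ string lengths of Remark \ref{rem:cij-explicit-formula}), so asserting that ``the bracket is governed by the representation theory of these algebras'' replaces the proof's core by its conclusion. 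Relatedly, your definition of $T_i$ on $\fh$ is circular: $s_i^{(A,\bp)}$ acts on $\bZ^{\bI}$, not on $\fh$, and the only way to pin down $T_i(H_j)$ is to set it equal to $\lambda_j\mu_jX_j$ and then \emph{check} that the resulting elements, together with the functionals $\xi_j\circ s_i$, form a realization of $(r_iA,r_i\bp)$ --- which also requires $\rk(r_iA)=\rk A$, a fact you do not address. Two smaller remarks: for $j\ne k$ with $j,k\ne i$ the relation $[\phi(E_j),\phi(F_k)]=0$ is automatic from the $\bZ^{\bI}$-grading (the degree $\alpha_j-\alpha_k+(c_{ik}-c_{ij})\alpha_i$ lies in neither $\bN_0^{\bI}$ nor $-\bN_0^{\bI}$), so no representation theory is needed there, contrary to the way you group the cases; and your surjectivity step silently uses $(\ad f_i)^{m}(\ad e_i)^{m}e_j\in\bk^{\times}e_j$ with $m=-c_{ij}$, which does hold for all four types of $\fg_i$ but is itself a char-$p$ computation that a complete proof must include.
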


To obtain a reduced root system, we need to disregard roots that are natural multiples of other roots. Consider
\begin{equation}\label{eq:root system (A,p)}
\varDelta_+^{(A,\bp)} \coloneqq \nabla_{\hspace{2pt} +}^{(A,\bp)} - \{k\, \alpha: \, \alpha\in\nabla_{\hspace{2pt} +}^{(A,\bp)}, k\in\bN, k\geq 2\}.
\end{equation}

Now we are ready to state another key result from \cite{AA-super}.
\begin{theorem}\label{thm:root system}
$\cR(\cC_\theta, (\varDelta^{(A,\bp)})_{(A,\bp)\in\cX})$ is a reduced root system. \qed
\end{theorem}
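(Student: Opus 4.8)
The plan is to verify directly, for each point $(A,\bp)\in\cX$, the four defining axioms of a root system over the Cartan graph $\cC_\theta$ for the family $(\varDelta^{(A,\bp)})_{(A,\bp)\in\cX}$ — here $\varDelta^{(A,\bp)}=\varDelta_{+}^{(A,\bp)}\cup(-\varDelta_{+}^{(A,\bp)})$ — and then to check reducedness. I abbreviate $s_i=s_i^{(A,\bp)}$, $\nabla=\nabla^{(A,\bp)}$ and $\varDelta=\varDelta^{(A,\bp)}$. Three of the axioms are immediate: $0\notin\varDelta$ since every root is nonzero by definition; $\varDelta\subseteq\bN_0^\theta\cup(-\bN_0^\theta)$ by \eqref{eq:prop1 roots}; and $\alpha_i\in\varDelta$ because $e_i\in\fg(A,\bp)_{\alpha_i}$ forces $\alpha_i\in\nabla_{+}^{(A,\bp)}$, while $\alpha_i$ is indivisible in $\bZ^\theta$ and hence is not among the elements discarded when passing from $\nabla_{+}^{(A,\bp)}$ to $\varDelta_{+}^{(A,\bp)}$ in \eqref{eq:root system (A,p)}.

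The substantive axiom is the reflection compatibility $s_i(\varDelta^{(A,\bp)})=\varDelta^{(r_iA,r_i\bp)}$, and the key input is the Lusztig-type isomorphism of Theorem \ref{thm:isomorfismo Ti}. I would first record that $\varDelta$ is precisely the set of \emph{indivisible} roots, i.e.\ those $\gamma\in\nabla$ with $\gamma\neq k\alpha$ for every $\alpha\in\nabla$ and $k\geq 2$: for positive $\gamma$ any such $\alpha$ is automatically positive, so this matches the definition \eqref{eq:root system (A,p)}, and the description is symmetric under $\gamma\mapsto-\gamma$. Next, applying \eqref{eq:Ti mueve grados via si} to the isomorphism $T_i^{(A,\bp)}\colon\fg(r_iA,r_i\bp)\to\fg(A,\bp)$ at a root $\beta\in\nabla^{(r_iA,r_i\bp)}$ (which lies in $\pm\bN_0^\theta$ by \eqref{eq:prop1 roots}) yields $\fg(A,\bp)_{s_i(\beta)}=T_i^{(A,\bp)}(\fg(r_iA,r_i\bp)_\beta)\neq 0$, whence $s_i(\beta)\in\nabla^{(A,\bp)}$; this gives $s_i(\nabla^{(r_iA,r_i\bp)})\subseteq\nabla^{(A,\bp)}$. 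Running the same argument for $T_i^{(r_iA,r_i\bp)}$, and using $r_i^2=\id_\cX$ together with $s_i^{(r_iA,r_i\bp)}=s_i^{(A,\bp)}$ — both part of the semi-Cartan graph structure of $\cC_\theta$ via \eqref{eq:semi-Cartan-graph-defn} — gives the reverse inclusion, so that $s_i(\nabla^{(A,\bp)})=\nabla^{(r_iA,r_i\bp)}$. Finally, since $s_i$ is a linear involution it respects the relation $\gamma=k\alpha$ and hence preserves indivisibility; combined with the previous identity this yields $s_i(\varDelta^{(A,\bp)})=\varDelta^{(r_iA,r_i\bp)}$, as required.

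Reducedness is then a short check from the definition alone, requiring no structure theory. Fix $\gamma\in\varDelta_{+}^{(A,\bp)}$ and $m\geq 2$. If $m\gamma$ were in $\varDelta^{(A,\bp)}$, then, being positive, it would lie in $\varDelta_{+}^{(A,\bp)}\subseteq\nabla_{+}^{(A,\bp)}$; but $m\gamma=m\cdot\gamma$ with $\gamma\in\nabla_{+}^{(A,\bp)}$ and $m\geq 2$ exhibits $m\gamma$ as one of the elements removed in \eqref{eq:root system (A,p)}, a contradiction. By the symmetry $\varDelta^{(A,\bp)}=\varDelta_{+}^{(A,\bp)}\cup(-\varDelta_{+}^{(A,\bp)})$ the same holds for $m\leq -2$, while $0\notin\varDelta^{(A,\bp)}$ and $\pm\gamma\in\varDelta^{(A,\bp)}$; hence $\bZ\gamma\cap\varDelta^{(A,\bp)}=\{\pm\gamma\}$.

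I expect the only delicate point to be the bookkeeping in the reflection axiom: one must absorb the sign behavior of $s_i$ (which sends $\alpha_i\mapsto-\alpha_i$ but keeps every other positive root positive) by working throughout with the full symmetric sets $\nabla$ and $\varDelta$ rather than their positive parts, and one must have the groupoid identities $r_i^2=\id_\cX$ and $s_i^{r_i(X)}=s_i^{X}$ available in order to run the argument in both directions. All the genuine algebraic difficulty is packaged into Theorem \ref{thm:isomorfismo Ti}, which we take as given.
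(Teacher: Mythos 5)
The paper itself contains no proof of this theorem: it is imported verbatim from \cite{AA-super} (hence the tombstone ending the statement), so your proposal can only be measured against the expected argument rather than an internal one. Your proof is correct and follows exactly that expected route: all of the substance sits in the reflection axiom, which you rightly derive from the Lusztig-type isomorphisms of Theorem \ref{thm:isomorfismo Ti} combined with the involutivity of $s_i^{(A,\bp)}$ and $r_i$ and the compatibility $s_i^{r_i(X)}=s_i^{X}$ (legitimately taken as given, since the statement presupposes that $\cC_\theta$ is a Cartan graph), while the remaining axioms, the identification of $\varDelta^{(A,\bp)}$ with the indivisible elements of $\nabla^{(A,\bp)}$, and reducedness follow from \eqref{eq:prop1 roots} and the definition \eqref{eq:root system (A,p)} just as you say.
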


To fully describe $\nabla^{(A,\bp)}$ we need to take into account all the multiples of roots in $\varDelta^{(A,\bp)}$. 

\begin{definition}\label{def:non degenerate odd}
We say that a root $\beta \in \varDelta^{(A,\bp)}$ is odd non-degenerate if there exist $i\in \bI$ and an element $(B,\bq)= r_{i_1} \cdots  r_{i_k}(A,\bp)$ in the Weyl class of $(A,\bp)$ with $b_{ii}=2$, $\bq_i=-1$ such that $s_{i_1}^{(A,\bp)} \cdots s_{i_k}\in \Hom\left((B,\bq), (A,\bp) \right)$ maps $\alpha_i$ to $\beta$. 
We denote by $\varDelta^{(A,\bp)}_{\ond}$ the set of all such roots.
\end{definition}

The existence of odd non-degenerate roots turns out to be the reason behind the existence of integer multiples of roots.

\begin{proposition}{\cite{AA}}\label{prop:non degenerate odd}
Assume that $\dim\fg(A,\bp)<\infty$. Then
\begin{align*}
\nabla^{(A,\bp)}&= \varDelta^{(A,\bp)}\cup\left(2 \varDelta^{(A,\bp)}_{\ond}\right), & &\text{ and } &
\dim\fg(A,\bp)_\beta&=1, \text{ for all }\beta\in\nabla^{(A,\bp)}.\qed
\end{align*}
\end{proposition}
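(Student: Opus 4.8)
The plan is to reduce the entire statement, by means of the Lusztig-type isomorphisms of Theorem~\ref{thm:isomorfismo Ti}, to a rank-one computation, and then to read off the four possible behaviors from the list of rank-one subalgebras. Throughout I use that $\dim\fg(A,\bp)<\infty$ guarantees, via Theorem~\ref{thm:root system} and Theorem~\ref{thm:finite-root-system-is-real}, that $\cR=(\varDelta^{(A,\bp)})_{(A,\bp)\in\cX}$ is a finite reduced root system, necessarily consisting of real roots, to which Theorem~\ref{thm:cuntz-heck} applies; and that every rank-one subalgebra $\fg_\bJ$ with $\bJ=\{j\}$, isolated as in Remark~\ref{rem:subset-subalgebra-contragredient}, is one of the four finite-dimensional types $\fsl(2)$, $\fH_3$, $\osp(2)$, $\supersl{1}{1}$. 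By the Chevalley involution, using \eqref{eq:prop1 roots} together with \eqref{eq:Ti conmunta con w}, it suffices to argue for positive roots.

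First I would set up the transport device. Given a word $w\in\Hom((A,\bp),(B,\bq))$ in the Weyl groupoid, write $T_w$ for the corresponding composite of the maps $T_i^{(\cdot)}$; by iterating \eqref{eq:Ti mueve grados via si} it restricts to a linear isomorphism $\fg(A,\bp)_\beta\isomorph\fg(B,\bq)_{w(\beta)}$ on each homogeneous component. In particular $T_w$ transports both the property of being a root and the dimension of the associated root space. This is exactly the mechanism that lets me move any root to a simple one and compare multiplicities there.

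Next, given $\beta\in\nabla_{+}^{(A,\bp)}$, let $\gamma$ be the minimal element of the finite set $\nabla_+^{(A,\bp)}\cap\bQ_{>0}\beta$; minimality together with \eqref{eq:root system (A,p)} forces $\gamma\in\varDelta_+^{(A,\bp)}$, since a proper root-submultiple of $\gamma$ would contradict minimality. Applying Theorem~\ref{thm:cuntz-heck} to the single root $\gamma$ produces $(B,\bq)\in\cX$ and $w\in\Hom((A,\bp),(B,\bq))$ with $w(\gamma)=\alpha_j$ simple in $(B,\bq)$. Transporting through $T_w$, the finite set $\nabla_+^{(A,\bp)}\cap\bQ_{>0}\gamma$ is carried bijectively onto $\nabla_+^{(B,\bq)}\cap\bQ_{>0}\alpha_j$, which is governed entirely by the rank-one subalgebra $\fg_j\subseteq\fg(B,\bq)$. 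Inspecting the four cases, this set equals $\{\alpha_j\}$ for the types $\fsl(2)$, $\fH_3$ and $\supersl{1}{1}$ (where $[e_j,e_j]=0$), and $\{\alpha_j,2\alpha_j\}$ for the type $\osp(2)$ (where $b_{jj}=2$, $\bq_j=-1$, the bracket $[e_j,e_j]\neq0$ spans $\fg(B,\bq)_{2\alpha_j}$, and $(\ad e_j)[e_j,e_j]=0$ prevents $3\alpha_j$ from being a root); in every case all homogeneous components are one-dimensional. Pulling back along $T_w$, I conclude $\beta\in\{\gamma,2\gamma\}$, that $\dim\fg(A,\bp)_\beta=1$, and that $\beta=2\gamma$ occurs exactly when $\fg_j\simeq\osp(2)$.

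Finally I would match this dichotomy with Definition~\ref{def:non degenerate odd}: the case $\beta=2\gamma$ happens precisely when some reflection word reaches an object $(B,\bq)$ at which $\gamma$ becomes a simple root $\alpha_j$ with $b_{jj}=2$, $\bq_j=-1$, i.e.\ exactly when $\gamma\in\varDelta^{(A,\bp)}_{\ond}$. Conversely, if $\gamma\in\varDelta^{(A,\bp)}_{\ond}$ then $2\alpha_j$ is a root at the corresponding $(B,\bq)$, and $T_w$ carries it back to show $2\gamma\in\nabla_+^{(A,\bp)}$. Together with the inclusion $\varDelta_+^{(A,\bp)}\subseteq\nabla_+^{(A,\bp)}$ this yields $\nabla_+^{(A,\bp)}=\varDelta_+^{(A,\bp)}\cup 2\varDelta^{(A,\bp)}_{\ond}$, and the full symmetric statement follows by applying $\omega$. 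The main obstacle I anticipate lies in the rank-one analysis in characteristic $p$: one must confirm that for the $\osp(2)$ type the bracket $[e_j,e_j]$ is genuinely nonzero while no higher multiple of $\alpha_j$ survives, and that the Heisenberg case $\fH_3$ does not secretly contribute extra roots. All of these points are controlled by the explicit structure of $\fg_j$ and the formulas for $c_{ij}^{(A,\bp)}$ recorded in Remark~\ref{rem:cij-explicit-formula}, in combination with the finiteness hypothesis.
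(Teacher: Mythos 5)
The paper itself contains no proof of this proposition: it is quoted from \cite{AA} and stated with a closing tombstone, so there is no internal argument to measure yours against. Judged on its own, your reconstruction is correct and uses only tools the paper actually states. The skeleton --- pass to the minimal root $\gamma$ on the ray $\bQ_{>0}\beta$ (minimality plus \eqref{eq:root system (A,p)} does force $\gamma\in\varDelta_+^{(A,\bp)}$), move $\gamma$ to a simple root $\alpha_j$ at some $(B,\bq)$ via \Cref{thm:cuntz-heck}, transport root spaces through the isomorphisms of \Cref{thm:isomorfismo Ti}, and settle everything inside the rank-one subalgebra $\fg_j$ via \Cref{rem:subset-subalgebra-contragredient} --- is sound, and your two-way matching of the dichotomy ``$2\alpha_j$ is a root iff $b_{jj}=2$, $\bq_j=-1$'' with \Cref{def:non degenerate odd} is exactly what makes the statement close up. This rank-reduction mechanism is the same one the paper itself uses to prove \Cref{lem:sum-of-roots}, so your route is very much in the spirit of the source.

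Two points need more care than you give them. First, a cosmetic one: $T_i^{(A,\bp)}$ maps $\fg(r_iA,r_i\bp)\to\fg(A,\bp)$, so the transport attached to $w\in\Hom((A,\bp),(B,\bq))$ is a composite of \emph{inverses} of such maps; also, positivity of the transported roots is not preserved by $w$ in general, but on the ray $\bQ_{>0}\alpha_j$ every element of $\nabla^{(B,\bq)}$ is automatically positive, which is what rescues your bijection claim. Second, and more substantively, the rank-one facts you defer at the end are the genuine content in small characteristic: for the $\osp(2)$ type one must check that $[e_j,e_j]\neq 0$ in the quotient (e.g.\ $[f_j,[e_j,e_j]]=4e_j\neq 0$ for $p>2$) and that no $k\alpha_j$ with $k\geq 3$ survives. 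In characteristic $3$ the super Jacobi identity does \emph{not} kill $[e_j,[e_j,e_j]]$ in the free algebra, so one has to verify $[f_j,[e_j,[e_j,e_j]]]=0$ and argue that $[e_j,[e_j,e_j]]$ generates a homogeneous ideal meeting $\fh$ trivially, hence lies in $\frr$; degrees $\geq 4$ then die inductively. Alternatively, all of this can be outsourced to the paper's own list of the four rank-one types $\fsl(2)$, $\fH_3$, $\osp(2)$, $\supersl{1}{1}$, each a concrete finite-dimensional algebra whose root spaces are visibly one-dimensional. With that detail pinned down, your proof is complete.
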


\begin{example}\label{ex:rk2-contr-Lie-superalgebras}
We describe all rank two finite-dimensional contragredient Lie superalgebras:
\begin{enumerate}[leftmargin=*,label=\rm{(\roman*)}]
\item The classical Lie algebras of types $A_2$, $B_2$ and $G_2$, with matrices $\left(\begin{smallmatrix} 2 & -1 \\ -1 & 2 \end{smallmatrix}\right)$, 
$\left(\begin{smallmatrix} 2 & -1 \\ -2 & 2 \end{smallmatrix}\right)$, $\left(\begin{smallmatrix} 2 & -3 \\ -1 & 2 \end{smallmatrix}\right)$ (here, $p>3$). The root systems are the classical ones. 
\item Similarly, $A(0|1)$, $B(0|1)$ are standard root systems, with Cartan matrices $\left(\begin{smallmatrix} 2 & -1 \\ -1 & 2 \end{smallmatrix}\right)$, 
$\left(\begin{smallmatrix} 2 & -1 \\ -2 & 2 \end{smallmatrix}\right)$. The root systems have the same roots as $A_2$ and $B_2$.

\item Let $p=3$, $a\in\Bbbk - \bF_3$. The \emph{Brown algebra} $\brown(2,a)$ is constructed as follows: Set
\begin{align*}
A&=\begin{bmatrix} 2 & -1 \\ a & 2 \end{bmatrix}, &
A'&=\begin{bmatrix} 2 & -1 \\ -1-a & 2 \end{bmatrix},
\end{align*}
so $C^{A}$ and $C^{A'}$ are of type $B_2$. We can check that $r_2(A)=A'$, $r_1(A)=A$, $r_1(A')=A'$,
\begin{align*}
\varDelta^A_+ &= \varDelta^{A'}_+ = \{1, 12, 12^2,2\},
\end{align*}
so $\dim \fg(A)=10$.
\item Again take $p=3$: we recall now the definition of the Lie superalgebra $\mathfrak{brj}(2;3)$. Set
\begin{align*}
A&=\begin{bmatrix} 0 & 1 \\ 1 & 0 \end{bmatrix}, \bp=(1,-1), & 
A'&=\begin{bmatrix} 0 & 1 \\ -2 & 2 \end{bmatrix}, & 
A''&=\begin{bmatrix} 2 & -1 \\ 1 & 0 \end{bmatrix}, \bp''=(-1,-1).
\end{align*}
In this case, 
\begin{align*}
C^{(A,\bp)}&=\begin{bmatrix} 2 & -2 \\ -1 & 2\end{bmatrix}, &
C^{(A',\bp)}&=\begin{bmatrix} 2 & -2 \\ -2 & 2\end{bmatrix}, &
C^{(A'',\bp'')}&=\begin{bmatrix} 2 & -4 \\ -1 & 2\end{bmatrix},
\end{align*}
$r_1(A,\bp)=(A',\bp)$, $ r_2(A,\bp)=(A'',\bp'')$, $ r_2(A',\bp)=(A',\bp)$, $ r_1(A'',\bp'')=(A'',\bp'')$, and
\begin{align*}
\varDelta^{(A,\bp)}_+ & = \{1, 1^22, 1^32^2, 1^42^3, 12,2\}, &
\varDelta^{(A,\bp)}_{\ond} &= \{12, 1^22\},
\\
\varDelta^{(A',\bp)}_+ & = \{1, 1^22, 1^32^2, 12, 12^2, 2\}, &
\varDelta^{(A',\bp)}_{\ond} &= \{1, 12\},
\\
\varDelta^{(A'',\bp'')}_+ & = \{1, 1^42, 1^32, 1^22, 12, 2\}, &
\varDelta^{(A'',\bp'')}_{\ond} &= \{1, 1^22\}.
\end{align*}
Thus $\sd\fg(A,\bp)=10|8$.

\item Finally take $p=5$. The Lie superalgebra $\mathfrak{brj}(2;5)$ admits two possible realizations as contragredient Lie superalgebra. Namely,
\begin{align*}
A&=\begin{bmatrix} 2 & -3 \\ 1 & 0 \end{bmatrix}, \bp=(1,-1), & 
A'&=\begin{bmatrix} 2 & -4 \\ 1 & 0 \end{bmatrix}, \bp'=(-1,-1).
\end{align*}
In this case, 
\begin{align*}
C^{(A,\bp)}&=\begin{bmatrix} 2 & -3 \\ -1 & 2\end{bmatrix}, &
C^{(A',\bp')}&=\begin{bmatrix} 2 & -4 \\ -1 & 2\end{bmatrix},
\end{align*}
$r_2(A,\bp)=(A',\bp')$, $ r_1(A,\bp)=(A,\bp)$, $ r_1(A',\bp)=(A',\bp)$, and
\begin{align*}
\varDelta^{(A,\bp)}_+ & = \{ 1, 1^32, 1^22, 1^52^3, 1^32^2, 1^42^3, 12, 2 \}, &
\varDelta^{(A,\bp)}_{\ond} &= \{1^22, 12\},
\\
\varDelta^{(A',\bp)}_+ & = \{1, 1^42, 1^32, 1^52^2, 1^22, 1^32^2, 12, 2\}, &
\varDelta^{(A',\bp)}_{\ond} &= \{1,1^22\}.
\end{align*}
Thus $\sd\fg(A,\bp)=10|12$.
\end{enumerate}
\end{example}

\begin{remark}\label{rmk:root vectors}
By \Cref{prop:non degenerate odd}, we can fix bases $(e_{\beta})_{\beta\in\nabla^{(A,\bp)}}$ for  $\fn_+$ and  $(f_{\beta})_{\beta\in\nabla^{(A,\bp)}}$  for $\fn_-$ such that
\begin{itemize}[leftmargin=*]
\item $e_{\alpha_i}=e_i$ and  $f_{\alpha_i}=f_i$, for all $i\in \mathbb I$;

\item $e_{\beta}\in \fg_{\beta}$ and $f_{\beta}\in \fg_{-\beta}$, for all $\beta\in\varDelta^{(A,\bp)}_+$; if $\beta = k \alpha_i+\alpha_j$ for some $i\ne j$ and $k\geq 0$, we take $e_\beta=(\ad e_i)^k e_j$ and $f_\beta=(\ad f_i)^k f_j$. 
\item $e_{2\beta}=[e_{\beta},e_{\beta}]$ and  $f_{2\beta}=[f_{\beta},f_{\beta}]$, for all $\beta\in\varDelta^{(A,\bp)}_{+,\ond}$.
\end{itemize}

Let $\beta=\sum_{i\in\bI}a_i\alpha_i\in\nabla^{(A,\bp)}_+$, $\xi_{\beta}\coloneqq \sum_{i\in\bI}a_i\xi_i\in\fh^{\ast}$. Then
\begin{align}\label{eq:action-h-on-root-vector}
[h, e_{\beta}] &=\xi_{\beta}(h)e_{\beta}, & [h, f_{\beta}] &=-\xi_{\beta}(h)f_{\beta}, &\mbox{for all } h&\in\fh. 
\end{align}
\end{remark}

We end this section with a generalization of a well-known result for Lie algebras. The proof follows as an application of \Cref{thm:cuntz-heck}.

\begin{lemma}\label{lem:sum-of-roots}
Assume that $\varDelta_+^{(A,\bp)}$ is finite and $\alpha, \beta \in \varDelta_+^{(A,\bp)}$ are such that $\alpha \notin \bZ \beta$.
\begin{enumerate}[leftmargin=*,label=\rm{(\alph*)}]
\item\label{item:sum-of-roots-i} If $n\in\mathbb N$ is such that $\beta+n\alpha\in \varDelta_+^{(A,\bp)}$, then $n<2p$ and there exists $c\in\Bbbk^{\times}$ such that 
$$(\ad e_{\alpha})^n e_{\beta}= c e_{\beta+n\alpha} .$$
\item\label{item:sum-of-roots-ii} If $n\in\mathbb N$ is such that $\beta-n\alpha\in \varDelta_+^{(A,\bp)}$, then $n<2p$ and there exists $c\in\Bbbk^{\times}$ such that 
$$(\ad e_{\alpha})^n f_{\beta}= c f_{\beta-n\alpha} .$$

\item\label{item:sum-of-roots-iii} Assume moreover that $\alpha \notin \varDelta^{(A,\bp)}_{\ond}$. If $n$ is as in any of the items above, then $n<p$.
\end{enumerate}
\end{lemma}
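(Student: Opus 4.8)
The plan is to use \Cref{thm:cuntz-heck} to transport the problem into a rank-two contragredient Lie superalgebra in which $\alpha$ becomes a \emph{simple} root, and then to analyze the resulting root string directly. Since $\cR$ is reduced (\Cref{thm:root system}), the only roots proportional to $\beta$ are $\pm\beta$, so the assumption $\alpha\notin\bZ\beta$ makes $\alpha$ and $\beta$ linearly independent. I would apply \Cref{thm:cuntz-heck} to $\gamma_1=\alpha$ and $\gamma_2=\beta$, producing an object $(B,\bq)$ of the Weyl class, a morphism $w\in\Hom\bigl((A,\bp),(B,\bq)\bigr)$ and $\sigma\in\mathbb{S}_{\bI}$ with $w(\alpha)=\alpha_{\sigma(1)}$ and $\mu:=w(\beta)\in\varDelta^{(B,\bq)}_+$ supported on $\{\sigma(1),\sigma(2)\}$. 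Writing $w$ as a composite of generating reflections, the associated composite $T_w$ of the Lusztig isomorphisms of \Cref{thm:isomorfismo Ti} is a Lie superalgebra isomorphism $\fg(B,\bq)\to\fg(A,\bp)$ sending $\fg(B,\bq)_{\eta}$ onto $\fg(A,\bp)_{w^{-1}(\eta)}$. As every root space is one-dimensional (\Cref{prop:non degenerate odd}), $T_w$ maps the root vectors $e_{\alpha_{\sigma(1)}}$, $e_{\mu}$, $f_{\mu}$, $e_{\mu+n\alpha_{\sigma(1)}}$, $f_{\mu-n\alpha_{\sigma(1)}}$ to nonzero scalar multiples of $e_{\alpha}$, $e_{\beta}$, $f_{\beta}$, $e_{\beta+n\alpha}$, $f_{\beta-n\alpha}$, respectively; and since $\mu$ has positive $\alpha_{\sigma(2)}$-coefficient while $\alpha,\beta$ are independent, $\mu\pm n\alpha_{\sigma(1)}=w(\beta\pm n\alpha)$ is a positive root of the rank-two subsystem exactly when $\beta\pm n\alpha$ is. Applying $T_w^{-1}$ thus reduces both \ref{item:sum-of-roots-i} and \ref{item:sum-of-roots-ii} to the same statements for $\mu_1:=\alpha_{\sigma(1)}$ and $\mu$ inside the rank-two subalgebra $\fg(A_\bJ,\bp_\bJ)$ with $\bJ=\{\sigma(1),\sigma(2)\}$ of \Cref{rem:subset-subalgebra-contragredient}, where $\mu_1$ is now simple.

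I would then regard the $\mu_1$-string through $\mu$, with lowest root $\nu$, as a module over the rank-one subalgebra $\fg_{\sigma(1)}$, which is one of $\fsl(2)$, $\fH_3$, $\osp(2)$ or $\supersl{1}{1}$. Every weight space $\fg_{\nu+m\mu_1}$ occurring in the string is one-dimensional (\Cref{prop:non degenerate odd}), so the assertion of \ref{item:sum-of-roots-i} amounts to the injectivity of $\ad e_{\mu_1}$ at each interior step. When the $\alpha_{\sigma(2)}$-coefficient of $\mu$ equals $1$, this is immediate from the normalization $e_{k\alpha_i+\alpha_j}=(\ad e_i)^k e_j$ in \Cref{rmk:root vectors}, which gives $(\ad e_{\mu_1})^n e_{\mu}=e_{\mu+n\mu_1}$ and $c=1$. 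In general I would use that $\fg'$ is generated by the Chevalley generators (\Cref{rem:derivedsubalgebra}), so each $e_{\mu+(m+1)\mu_1}$ lies in $[e_{\sigma(1)},\fg_{\mu+m\mu_1}]+[e_{\sigma(2)},\fg_{\mu+(m+1)\mu_1-\alpha_{\sigma(2)}}]$; whenever the second summand vanishes, $\ad e_{\mu_1}$ must be nonzero at that step, and the explicit rank-two root systems of \Cref{ex:rk2-contr-Lie-superalgebras} (together with the classical types for $p>3$) settle the remaining finitely many string shapes by inspection.

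For the bounds I would read off from \Cref{rem:cij-explicit-formula} that the Cartan entries satisfy $|c_{ij}^{(A,\bp)}|<2p$, with $|c_{ij}^{(A,\bp)}|<p$ unless $a_{ii}=2$ and $p_i=-1$, i.e.\ unless $\fg_i\simeq\osp(2)$. As every $\mu_1$-string in the rank-two subsystem has length bounded by these integers---at most $2p$ in general and at most $p$ when $\fg_{\sigma(1)}\not\simeq\osp(2)$, as one checks on the classification---this yields $n<2p$, and $n<p$ in the latter case. This dichotomy is exactly \ref{item:sum-of-roots-iii}: by \Cref{def:non degenerate odd} a simple root $\alpha_i$ is odd non-degenerate if and only if $2\alpha_i$ is a root, that is $\fg_i\simeq\osp(2)$; and since $\varDelta_{\ond}$ is invariant under the Weyl groupoid and $w(\alpha)=\alpha_{\sigma(1)}$, the hypothesis $\alpha\notin\varDelta^{(A,\bp)}_{\ond}$ means precisely that $\fg_{\sigma(1)}\not\simeq\osp(2)$. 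Lastly, \ref{item:sum-of-roots-ii} reduces to \ref{item:sum-of-roots-i} through the Chevalley involution of $\fg(A_\bJ,\bp_\bJ)$, which converts $(\ad e_{\mu_1})^n f_{\mu}=c\,f_{\mu-n\mu_1}$ into the lowering analogue $(\ad f_{\mu_1})^n e_{\mu}=c'\,e_{\mu-n\mu_1}$, handled by the mirror of the string analysis above. I expect the real obstacle to be the nonvanishing $c\neq0$ for non-simple $\mu$, i.e.\ excluding an interior kernel of $\ad e_{\mu_1}$ along the string, whereas the bounds on $n$ follow routinely from the tabulated values of $c_{ij}^{(A,\bp)}$.
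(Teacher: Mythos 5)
Your proposal is correct and takes essentially the same route as the paper: both reduce to rank two via \Cref{thm:cuntz-heck} (with the reduction implemented by composites of the isomorphisms of \Cref{thm:isomorfismo Ti} and one-dimensionality of root spaces from \Cref{prop:non degenerate odd}), settle items \ref{item:sum-of-roots-i} and \ref{item:sum-of-roots-ii} by a case-by-case inspection of the rank-two classification in \Cref{ex:rk2-contr-Lie-superalgebras}, and prove \ref{item:sum-of-roots-iii} by noting that the image of $\alpha$ is a simple root not lying in $\varDelta_{\ond}$ of the rank-two object. The additional details you spell out (the transport of root vectors by $T_w$, the string analysis, the Chevalley involution for \ref{item:sum-of-roots-ii}) are precisely what the paper's terse proof leaves implicit.
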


\begin{proof}
\ref{item:sum-of-roots-i}, \ref{item:sum-of-roots-ii} Since $\alpha$ and $\beta$ are linearly independent, by Theorem \ref{thm:cuntz-heck} there exists a pair $(B,\bp')$ with two simple roots $\beta_1, \beta_2$, and $w\in\Hom ( (A, \bp), (B,\bp'))$ 
such that 
\begin{align*}
\omega (\alpha) = \beta_1, && \omega (\beta)\in \varDelta_+^{(B,\bp')} \cap \{\bN_0 \beta_1 + \bN_0 \beta_2\}.
\end{align*}
So it is enough to verify the claims for $(A,\bp)$ of rank $\theta=2$, which is easily achieved from a case-by-case analysis, see \Cref{ex:rk2-contr-Lie-superalgebras}.

For \ref{item:sum-of-roots-iii} we can reduce to $\theta=2$ again and use that $\beta_1 \notin \varDelta^{(B,\bp')}_{\ond}$.
\end{proof}

\section{Lie algebras in symmetric tensor categories}\label{sec:Lie-algs-STC}

We recall some notions and basic results related to symmetric tensor categories, Lie algebras in this broad context, and the Verlinde category.

\subsection{Symmetric tensor categories}
A \emph{symmetric tensor category} is an abelian $\bk$-linear category which admits a rigid symmetric monoidal structure such that the tensor product is bilinear on hom-spaces and the unit object has a one-dimensional endomorphism space.

A \emph{pre-Tannakian} category is a symmetric tensor category where all objects have finite length, which then implies that all Hom spaces are finite-dimensional. Notice that we adopt the terminology of \cite{CEO}.

\smallbreak

The ind-completion $\indcat{\cC}$ of a symmetric tensor category $\cC$ is defined as the closure of $\cC$ under filtered colimits. 
This completion $\indcat{\cC}$ is a $\bk$-linear abelian category with an exact and symmetric tensor product, and there is universal exact symmetric embedding $\cC \hookrightarrow \indcat{\cC}$, see \cite{Kash-Shap}. We shall refer objects in $\indcat{\cC}$ as ind-objects of $\cC$.
When $\cC$ is moreover fusion, that is, finite and semisimple, the ind-objects are (possibly infinite) direct sums of simple objects in $\cC$. The genuine objects of $\cC$ are then recovered as ind-objects of finite length. If $X$ and $Y$ are ind-objects, we imprecisely write $\Hom_{\cC} (X,Y)$ instead of $\Hom_{\indcat{\cC}} (X,Y)$.

\subsection{The category \texorpdfstring{$\Rep(\balp_p)$}{}}\label{subsec:alphap}
Let $\balp_p$ denote the Frobenius kernel of the additive group scheme. This group scheme is represented by the commutative and cocommutative Hopf algebra $\bk[t]/(t^p)$ where $t$ is primitive. By self-duality, we can safely identify $\Rep(\balp_p)$ with the symmetric tensor category of finite dimensional representations of $\bk[t]/(t^p)$ over $\bk$. 

Isomorphism classes of indecomposables in $\Rep(\balp_p)$ are represented by nilpotent Jordan blocks $L_i$ of size $i$ with $1\leq i \leq p$. Explicitly, $L_i$ has an ordered basis $\{v_1, \dots, v_i\}$ such that $v_{j+1}=t v_{j}$ for $1\leq j <i$ and $tv_i=0$. Such a basis is called \emph{cyclic}; in this situation we write $L_i=\bk\{v_1, \dots, v_i\}=\langle v_1\rangle$ and say that $v_1$ is a cyclic generator. Notice that an element in $\Hom_{\alpha_p}(\langle v_1\rangle, V)$ is determined by its value on $v_1$. 

The indecomposable objects are self-dual, and an isotypic decomposition of $L_i\ot L_j$ is known from \cite{Green}. In particular $L_1$ is the monoidal unit and $L_p \otimes L_p\simeq pL_p$. For later use, we record an explicit decomposition of some tensor products.

\begin{example}\label{ex:alphap-isotypic-L2} 
Assume $p>2$. Consider two copies of $L_2$, with cyclic bases $\{v_1, v_2\}$ and $\{w_1, w_2\}$ respectively. Then $L_2\otimes L_2=L_1 \oplus L_3$ where \begin{align*}
L_1&= \langle v_1\otimes w_2 - v_2 \otimes w_1\rangle, & L_3&=\langle v_1\otimes w_1\rangle .
\end{align*}
\end{example}

\begin{example}\label{ex:alphap-isotypic-L3-p>3} 
Consider two copies of $L_3$, with cyclic bases $\{v_1, v_2, v_3\}$ and $\{w_1, w_2,w_3\}$. 

\begin{itemize}[leftmargin=*]
\item If $p>3$, then $L_3\otimes L_3=L_1 \oplus L_3\oplus L_5$ where
\begin{align*}
L_1& = \langle v_1\otimes w_3 - v_2 \otimes w_2+v_3\otimes w_1 \rangle, & 
L_3& = \langle v_1\otimes w_2 -v_2\otimes w_1\rangle, &
L_5& = \langle  v_1\otimes w_1\rangle.
\end{align*}

\item If $p=3$, then $L_3\otimes L_3=L_3^{(1)} \oplus L_3^{(2)}\oplus L_3^{(3)}$ where 
\begin{align*}
L_3^{(1)}& = \langle  v_2\otimes w_2 \rangle, & 
L_3^{(2)}& = \langle v_1\otimes w_2 -v_2\otimes w_1 \rangle, &
L_3^{(3)}& = \langle  v_1\otimes w_1 \rangle.
\end{align*}
\end{itemize}
\end{example}

\begin{example}\label{ex:alphap-isotypic-L4-p>5}
Consider two copies of $L_4$, with bases $\{v_1, v_2, v_3,v_4\}$ and $\{w_1, w_2,w_3,w_4\}$ respectively.
\begin{itemize}[leftmargin=*]
\item If $p>5$, then $L_4\otimes L_4=L_1 \oplus  L_3 \oplus  L_5 \oplus  L_7$, where 
\begin{small}
\begin{align*}
L_1& = \langle v_1\otimes w_4 - v_2 \otimes w_3+v_3\otimes w_2-v_4\otimes v_1 \rangle, &
L_5& = \langle  v_1\otimes w_2- v_2\otimes w_1 \rangle,
\\
L_3& = \langle 3v_1\otimes w_3 -4v_2\otimes w_2+3v_3\otimes w_1 \rangle, &
L_7& = \langle v_1\otimes w_1 \rangle .
\end{align*}
\end{small}

\item If $p=5$, then $L_4\otimes L_4=L_1\oplus 3 L_5$, where 
\begin{align*}
L_1& = \langle v_1\otimes w_4 - v_2 \otimes w_3+v_3\otimes w_2-v_4\otimes v_1\rangle .
\end{align*}
\end{itemize}
\end{example}

For reasons that will become evident later, we are particularly interested in tensor products of the form $L_3\otimes L_j$ for $j$ at most $4$. Next, we describe explicit direct sum decompositions.

\begin{example}\label{ex:alphap-isotypic-L3-L2-p>3}
Consider indecomposables $L_3=\bk\{v_1, v_2, v_3\}$ and $L_2=\bk\{w_1, w_2\}$. 
\begin{itemize}[leftmargin=*]
\item Assume $p>3$. Then $L_3\otimes L_2=L_2 \oplus  L_4$, where 
\begin{align*}
L_2&=\langle 2v_1\otimes w_2 - v_2 \otimes w_1\rangle, & L_4&=\langle v_1\otimes w_1\rangle.
\end{align*}

\item Assume $p=3$. Then $L_3\otimes L_2=L_3^{(1)} \oplus L_3^{(2)}$, where
\begin{align*}
L_3^{(1)}& = \langle v_1\otimes w_1 \rangle, &
L_3^{(2)}& = \langle v_2\otimes w_1 \rangle.
\end{align*}
\end{itemize}
\end{example}

\begin{example}\label{ex:alphap-isotypic-L3-L4-p>5}
Consider indecomposables $L_3=\bk\{v_1, v_2, v_3\}$ and $L_4=\bk\{w_1, w_2, w_3, w_4\}$.

\begin{itemize}[leftmargin=*]
\item Assume $p>5$. Then $L_3\otimes L_4=L_2 \oplus  L_4\oplus L_6 $, where 
\begin{align*}
L_2&=\langle v_1\otimes w_3-2 v_2\otimes w_2 + 3  v_3 \otimes w_1\rangle, & L_4&=\langle 2v_1\otimes w_2 - 3v_2\otimes w_1\rangle, & L_6&=\langle v_1\otimes w_1\rangle.
\end{align*}

\item Assume $p=5$. Then $L_3\otimes L_4=L_2 \oplus 2 L_5$, where
\begin{align*}
L_2&=\langle v_1\otimes w_3-2 v_2\otimes w_2 + 3  v_3 \otimes w_1\rangle.
\end{align*}
\end{itemize}
\end{example}

\subsection{Semisimplification and the Verlinde category}\label{subsec:verp}

If $\cC$ is a symmetric tensor category, a tensor ideal $\cI$ is a collection of subspaces 
\begin{align*}
\cI&=\{\cI(X,Y)\subseteq \Hom_\cC(X,Y) | X, Y \text{ objects in }\cC\},
\end{align*}
which is compatible with compositions and tensor products. The \emph{quotient category} $\overline{\cC}$ of $\cC$ by $\cI$ is the category whose objects are the same as $\cC$ and the spaces of morphisms are $\Hom_{\overline{\cC}}(X,Y)\coloneqq\Hom_\cC(X,Y)/ \cI(X,Y)$. Notice that this construction is compatible with the $\bk$-linear monoidal structure and the tensor product of the original category.

As in \cite{EO-ss}, see also the references therein, we may take a spherical category $\cC$ and the tensor ideal of \emph{negligible} objects. The quotient $\overline{\cC}$ obtained in this case is known as the \emph{semisimplification} of $\cC$ since it turns out to be semisimple: The simple objects of $\overline{\cC}$ are the indecomposable objects in $\cC$ with non-zero categorical dimension.

\medbreak 

The Verlinde category $\Verp$ is defined as the semisimplification of the spherical tensor category $\Rep(\balp_p)$, see \cites{GelKaz, Ost2}. This is a symmetric fusion category equipped with a symmetric monoidal functor $\Rep(\balp_p)\to \Verp$ which fails to be right or left exact. For instance, since the categorical dimension of $L_p$ is $p=0$, it is mapped to zero in $\Verp$. Up to isomorphism, the simple objects of $\Verp$ are the images $\ttL_i$ of the indecomposables $L_i$ with $1\leq i \leq p-1$ of $\Rep (\balp_p)$, and the fusion rules in $\Verp$ are 
\begin{align}\label{eq:verp-fusionrules}
\ttL_i \ot \ttL_j = \bigoplus_{k=1}^{\min\{i, j, p-i, p-j\}} \ttL_{\vert i-j \vert + 2k-1}.
\end{align}
Also, the fusion subcategory generated by $\ttL_1$ and $\ttL_{p-1}$ is equivalent, as a symmetric tensor category, to $\sVec$, see \cites{Ost2,Kan}.

\subsection{Lie algebras in symmetric tensor categories}\label{subsec:operadic} 
We recall now different flavors of \emph{Lie algebras} in a (strict) symmetric tensor category $\cC$ following \cite{Etingof}. First, an \emph{operadic Lie algebra} is an object $\fg$ in $\cC$ equipped with a morphism $b\colon \fg\otimes\fg \to \fg$ that satisfies the anti-symmetric and Jacobi identities: 
\begin{align}\label{eq:operadic}
b\circ(\id_{\fg\ot \fg} + c_{\fg, \fg}) = 0,  \qquad b\circ (b\ot \id_{\fg})\circ(\id_{\fg^{\ot 3}} + (123)_{\fg^{\ot 3}} +(132)_{\fg^{\ot 3}}) = 0,
\end{align}
where the maps $(123)_{\fg^{\ot 3}}, (132)_{\fg^{\ot 3}}\colon\fg^{\ot 3}\to \fg^{\ot 3}$ are obtained the action of the symmetric group. We denote by $\OLie(\cC)$ the category of operadic Lie algebras in $\cC$. Operadic Lie algebras are just called Lie algebras in \cite{Rumynin} and other previous papers. However, especially when working in positive characteristic, one needs to pay special attention to some details, as explained next.

\medbreak

For a Lie algebra $\fg$, consider two quotients of the tensor algebra $\op{T}(\fg)$:
\begin{itemize}[leftmargin=*]\renewcommand{\labelitemi}{$\diamond$}
\item The \emph{universal enveloping algebra} $\op{U}(\fg)$ is the quotient by the two-sided ideal generated by the image of the map
\begin{align}\label{eq:defn-enveloping-algebra}
(-b,\id_{\fg\ot\fg}- c_{\fg,\fg})\colon \fg\ot\fg\to \fg\oplus\fg\ot\fg\subseteq \op{T}(\fg).
\end{align}
\item The symmetric algebra $\op{S}(\fg)$ is the quotient of $\op{T}(\fg)$ by the image of $\id_{\fg\ot\fg}- c_{\fg,\fg}$.
\end{itemize}
As in the category of vector spaces, there is a natural map $\eta \colon \op{S}(\fg) \to \gr \op{U}(\fg)$ of ind-algebras in $\cC$, which can fail to be an isomorphism. 
For example, it is known that for $\cC=\Vec$ and $p=2$, for $\eta$ to be an isomorphism one needs to impose the additional condition $b(x\ot x)=0$ for all $x$. Also, for $\cC=\sVec$ and $p=3$, we have to assume $b(b(x\ot x)\ot x)=0$ for all odd $x$. 

Motivated by this fact, an operadic Lie algebra $\fg$ is said \emph{PBW} if the canonical map $\eta \colon \op{S}(\fg) \to \gr \op{U}(\fg)$ is an isomorphism.

\medbreak

It is not a trivial task to decide whether a given operadic Lie algebra in a symmetric tensor category is PBW.
For $\cC=\Ver_p$, Etingof introduced in\cite{Etingof} the \emph{$p$-Jacobi identity} for any $p\geq5$, a degree-$p$ relation that generalizes the conditions $b(x\ot x)=0$ and $b(b(x\ot x)\ot x)=0$ needed for characteristics $2$ and $3$, respectively.
By \cite{Etingof}*{Theorem 6.6, Corollary 6.7}, an operadic Lie algebra in $\Ver_p$ is PBW if and only if it satisfies the $p$-Jacobi identity. As a first application, we have the following result:

\begin{lemma}\label{contragredient-pJacobi}
Contragredient operadic Lie superalgebras satisfy the PBW theorem in any characteristic. 
In other words, for any contragredient Lie superalgebra $\fg=\fg(A,\bp)$, the natural map  $\eta \colon \op{S}(\fg) \to \gr \op{U}(\fg)$ is an isomorphism. 
\end{lemma}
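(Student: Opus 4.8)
The plan is to use Etingof's criterion \cite{Etingof}*{Theorem 6.6, Corollary 6.7}, which says that an operadic Lie algebra is PBW exactly when it satisfies the $p$-Jacobi identity, and to split according to the characteristic. When $p\geq 5$ there is nothing new to prove: $\fg=\fg(A,\bp)$ is a Lie superalgebra over a field of characteristic $\neq 2,3$, so the classical super PBW theorem already yields that $\eta\colon\op{S}(\fg)\to\gr\op{U}(\fg)$ is an isomorphism. The content of the lemma is therefore concentrated in the low characteristics, where (as recalled above) PBW for objects of $\sVec$ amounts to a single condition: $[[x,x],x]=0$ for every odd $x$ when $p=3$, and $[x,x]=0$ for every $x$ when $p=2$. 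I will establish both uniformly from the root space decomposition.

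The main device is the $\bZ^\bI$-grading $\fg=\fh\oplus\bigoplus_{\alpha\neq 0}\fg_\alpha$. Writing $x=\sum_\beta x_\beta$ with $x_\beta\in\fg_\beta$, I would expand the pertinent degree-$p$ expression and separate the \emph{diagonal} terms, built from a single component, from the \emph{mixed} terms involving two or three distinct components. The diagonal term equals $[[x_\beta,x_\beta],x_\beta]\in\fg_{3\beta}$ when $p=3$, and $[x_\beta,x_\beta]\in\fg_{2\beta}$ when $p=2$. Each index $\beta$ that occurs is a root of $\fg$; when $p=3$ it is moreover an odd root, and when $p=2$ the parity collapses, so $\fg$ is an ordinary Lie algebra. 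In either case $\beta$ belongs to the reduced system $\varDelta^{(A,\bp)}$ of \Cref{thm:root system}, and by \Cref{prop:non degenerate odd} one has $\nabla^{(A,\bp)}=\varDelta^{(A,\bp)}\cup 2\varDelta^{(A,\bp)}_{\ond}$. Consequently no multiple $k\beta$ with $k\geq 3$ is a root, while $2\beta$ is a root only for $\beta\in\varDelta^{(A,\bp)}_{\ond}$, which is impossible in characteristic $2$. Thus $\fg_{p\beta}=0$ and every diagonal term vanishes.

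It remains to annihilate the mixed terms, and this is where the super-Jacobi identity does the work. For $p=3$ the expansion of $[[x,x],x]$ contributes, besides the cubes, the bidegree expressions $[[x_\beta,x_\beta],x_\gamma]+2[[x_\beta,x_\gamma],x_\beta]$ and the symmetrized sums $\sum_{\sigma\in\mathbb{S}_3}[[x_{\sigma(1)},x_{\sigma(2)}],x_{\sigma(3)}]$ over three distinct components. Since the bracket of two odd elements is symmetric, super-Jacobi applied to $(x_\beta,x_\beta,x_\gamma)$ kills the first family, and super-Jacobi applied to a triple of distinct components kills the second; both vanish identically, with no hypothesis on the characteristic. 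For $p=2$ the mixed part of $[x,x]$ is $\sum_{\beta\neq\gamma}[x_\beta,x_\gamma]$, which vanishes because the bracket is symmetric and $2=0$. Together with the vanishing of the diagonal terms, this proves the required identity in each characteristic, whence $\eta$ is an isomorphism.

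The only step that is not purely formal is the cancellation of the mixed terms when $p=3$, and this is where I expect any real difficulty to lie: one must confirm that super-Jacobi exactly annihilates both the bidegree and the trilinear contributions. I would verify this by the short computation indicated above; as a safeguard, should some bidegree term resist direct simplification, I would reduce to rank two by \Cref{thm:cuntz-heck}, in the spirit of the proof of \Cref{lem:sum-of-roots}, and then dispose of the finitely many configurations using the explicit root data of \Cref{ex:rk2-contr-Lie-superalgebras}.
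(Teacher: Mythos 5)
Your use of Etingof's criterion, the $p\geq 5$ case, and the cancellation of the mixed terms are all sound; in fact, your mixed-term computation is precisely the unstated justification of the reduction ``we can assume $x$ is homogeneous'' with which the paper's own proof begins. The genuine gap is in how you dispose of the diagonal terms. You invoke \Cref{prop:non degenerate odd} to conclude $\fg_{2\beta}=0$ (for $p=2$), respectively $\fg_{3\beta}=0$ (for $p=3$), but that proposition carries the hypothesis $\dim\fg(A,\bp)<\infty$, whereas the lemma is asserted for \emph{any} contragredient Lie superalgebra. The construction of $\fg(A,\bp)$ in \S2 allows arbitrary matrices, and for infinite-dimensional examples (already for $A$ of affine type) there are imaginary roots $\delta$ such that $k\delta\in\nabla^{(A,\bp)}$ for all $k\geq 1$; then the degree argument gives no control over $[x_\delta,x_\delta]$ or $[[x_\delta,x_\delta],x_\delta]$, and your fallback via \Cref{thm:cuntz-heck} is also unavailable, since that too is a statement about finite root systems.

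Even within the finite-dimensional case two steps are unjustified. For $p=3$ you must also exclude $3\beta\in 2\varDelta^{(A,\bp)}_{\ond}$, i.e.\ $3\beta=2\gamma$ with $\gamma\in\varDelta^{(A,\bp)}_{\ond}$; this is fixable (parity is determined by the $\bZ^{\bI}$-degree, so $\fg_{3\beta}$ is odd while $\fg_{2\gamma}$ is even), but you do not address it. More seriously, for $p=2$ your claim that $\varDelta^{(A,\bp)}_{\ond}=\emptyset$ ``because the parity collapses, so $\fg$ is an ordinary Lie algebra'' is circular: an operadic Lie algebra in characteristic $2$ does \emph{not} automatically satisfy $[x,x]=0$ --- that is exactly the identity you are trying to prove --- and whether $2\beta$ is a root amounts to whether $[e_\beta,e_\beta]\neq 0$ in $\fg$. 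The paper's proof avoids all root-multiplicity considerations: after the same reduction to homogeneous $x\in\fn_+$ (via the Chevalley involution), it uses the structural property that the only element of $\fn_+$ annihilated by every $\ad f_i$ is $0$ (a consequence of the maximality of the ideal $\frr$), and then kills $[f_i,[x,x]]$, resp.\ $[f_i,[x,[x,x]]]$, by bare Jacobi-identity manipulations. That argument needs no reducedness, no multiplicity-one statements, and no finiteness, which is why it covers the full scope of the lemma. To salvage your approach you would have to either restrict the statement to finite-dimensional $\fg$, or replace the diagonal-term step by the $\ad f_i$ argument just described.
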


\begin{proof}
By \cite{Etingof}*{Corollary 4.12}, it is enough to show that if $p=2$ then $[x,x]=0$ for all $x \in \fg$ and that when $p=3$, we have $[x,[x,x]]=0$ for all odd $x$. 
In any case, we can assume that $x$ is homogeneous of nonzero degree; moreover, via the Chevalley involution, we reduce to the case $x\in \fn_+$. 

When $p=2$, it is enough to show that $[f_i,[x,x]]=0$ for all $i\in \bI$, which is a consequence of the Jacobi identity:
\begin{align*}
[f_i, [x,x]] = [[f_i,x],x]+[x,[f_i,x]] = 2[[f_i,x],x]=0.
\end{align*}

For the case $p=3$, we need to show that  $[f_i,[x, [x,x]]]=0$ for $i \in \bI$ and any odd $x\in \fn_+$. When $f_i$ is odd, using the Jacobi identity and the fact that $[f_i,x]$ is even, we get
\begin{align*}
[f_i,[x, [x,x]]]&=[[f_i,x],[x,x]] - [x,[f_i, [x,x]]] \\
&= [[[f_i,x],x],x] +[x,[[f_i,x],x]] - [x, [[f_i,x],x] - [x, [f_i,x]]]
=0.
\end{align*}
Similarly, when $f_i$ is even, we see that
\begin{align*}
[f_i,[x, [x,x]]]&=[[f_i,x],[x,x]] + [x,[f_i, [x,x]]] \\
&= [[[f_i,x],x],x] -[x,[[f_i,x],x]] + [x, [[f_i,x],x] + [x, [f_i,x]]]
=0,
\end{align*}
as claimed.
\end{proof}

Fix an operadic Lie algebra $\fg$ with bracket $b$ in a strict symmetric tensor category $\cC$.
A form on $\fg$ is a map $B\colon\fg\ot\fg\to \one$ in $\cC$. We say that the form is:
\begin{itemize}[leftmargin=*]
\item \emph{symmetric} if it remains unchanged when composed with the braiding $\fg\ot \fg \to \fg \ot \fg$;
\item \emph{invariant} if $B\circ(b\ot \id_{\fg})\circ(\id_{\fg^{\ot 3}}+(123)_{\fg^{\ot 3}})=0$;
\item \emph{non-degenerate} if its image under the natural adjunction $\Hom_\cC(\fg\ot\fg, \one)\simeq \Hom_\cC(\fg,\fg^*)$ is an isomorphism.
\end{itemize} 

\begin{lemma}\label{lemma:bilinear-forms-ss}
Let $\fg$ be an operadic Lie algebra with a form $B:\fg\ot\fg\to \one$.
Let $\ov{B}:\ov{\fg}\ot\ov{\fg}\to\one$ denote the form on $\ov{\fg}$ induced by $B$ under semisimplification.
If $B$ is symmetric, respectively, invariant, and non-degenerate, then so is $\ov{B}$.
\end{lemma}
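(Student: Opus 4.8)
The plan is to exploit that semisimplification is realized by a $\bk$-linear symmetric monoidal functor $S\colon \cC \to \ov{\cC}$, namely the quotient by the ideal of negligible morphisms. Under $S$ we have $\ov{\fg} = S(\fg)$, the bracket is $\ov{b} = S(b)$, and, after the canonical identifications $S(\fg \ot \fg) \cong S(\fg) \ot S(\fg)$ and $S(\one) \cong \one$ coming from the monoidal structure, the induced form is exactly $\ov{B} = S(B)$. All three assertions then reduce to naturality: each defining condition for $B$ is an equation between morphisms in $\cC$, and applying $S$ transports it verbatim to the corresponding equation for $\ov{B}$.

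For symmetry, I would use that $S$ is symmetric monoidal, so $S(c_{\fg,\fg}) = c_{\ov{\fg},\ov{\fg}}$; applying $S$ to the identity expressing that $B$ is unchanged by the braiding yields $\ov{B} \circ c_{\ov{\fg},\ov{\fg}} = \ov{B}$. For invariance, I would apply $S$ to $B\circ(b\ot \id_{\fg})\circ(\id_{\fg^{\ot 3}}+(123)_{\fg^{\ot 3}})=0$, using that $S$ is additive, preserves the bracket, and preserves the $\mathbb{S}_3$-action on $\fg^{\ot 3}$ (again because it is symmetric monoidal); the image is precisely the invariance identity for $\ov{B}$ and equals $S(0)=0$.

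The only case demanding care is non-degeneracy, since it is phrased through the adjunction $\Hom_\cC(\fg\ot\fg,\one)\cong\Hom_\cC(\fg,\fg^*)$ rather than by a single equation. Here I would first invoke the standard fact that a monoidal functor between rigid categories preserves duals, producing a canonical isomorphism $S(\fg^*)\cong\ov{\fg}^*$ that is compatible with the duality data, so that $S(\ev_\fg)$ and $S(\coev_\fg)$ become $\ev_{\ov{\fg}}$ and $\coev_{\ov{\fg}}$ under this identification. Since the adjunction is assembled from $\coev$ and $\ev$, this compatibility shows that the mate $\phi_B\colon\fg\to\fg^*$ of $B$ is sent by $S$ to the mate $\phi_{\ov{B}}\colon\ov{\fg}\to\ov{\fg}^*$ of $\ov{B}$. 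Finally, as $\phi_B$ is an isomorphism by hypothesis and any functor preserves isomorphisms, $\phi_{\ov{B}}=S(\phi_B)$ is an isomorphism, whence $\ov{B}$ is non-degenerate.

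I expect the main obstacle to be this last step, specifically making the identification $S(\fg^*)\cong\ov{\fg}^*$ explicit and verifying that $S$ intertwines the two adjunctions; once the compatibility of $S$ with the evaluation and coevaluation is recorded, the remaining verifications are formal consequences of functoriality.
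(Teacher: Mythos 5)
Your proposal is correct and follows essentially the same route as the paper: symmetry and invariance are transported by applying the (additive, symmetric monoidal) semisimplification functor to the defining identities, and non-degeneracy follows because the functor intertwines the two adjunctions $\Hom(\fg\ot\fg,\one)\simeq\Hom(\fg,\fg^*)$ and $\Hom(\ov{\fg}\ot\ov{\fg},\one)\simeq\Hom(\ov{\fg},\ov{\fg}^*)$ and preserves isomorphisms. Your explicit appeal to preservation of duals, evaluation, and coevaluation is precisely the content of the commutative square the paper writes down for this last step.
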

\begin{proof}
The property of being symmetric (respectively, invariant) is preserved because the semisimplification functor is additive. Non-degeneracy is also hereditary since the diagram 
\[\begin{tikzcd}
\Hom_\cC(\fg\ot\fg, \one)\arrow{rr}{\sim} \arrow{d} && \Hom_{\cC}(\fg,\fg^*) \arrow{d}\\
\Hom_{\ov{\cC}}(\ov{\fg}\ot\ov{\fg}, \one) \arrow{rr}{\sim}&& \Hom_{\ov{\cC}}(\ov{\fg},\ov{\fg}^*).
\end{tikzcd}\]
commutes.
\end{proof}

\section{Semisimplification of Lie algebras with a derivation}\label{sec:ss-Lie-algebras}
As in \cite{Kan}, if $\fg$ is a finite dimensional Lie algebra over $\bk$ endowed with a derivation $\partial$ of order at most $p$, then $\fg$ becomes a Lie algebra in $\Rep (\balp_p)$ by letting $t$ act via $\partial$. 
Applying the semisimplification $\Rep (\balp_p)\to \Ver_p$, we get an operadic Lie algebra in $\Verp$ since the functor is braided monoidal.

We are interested in understanding the result of this process when the input is a contragredient Lie algebra $\fg=\fg(A)$ with an inner derivation $\partial=\ad x$, for some $x\in \fg$. 
If moreover, $x$ is homogeneous with respect to the grading by $ \varDelta^A$, using \Cref{thm:cuntz-heck}, we may assume that $x=e_i$ for some $i\in\bI$. Let us fix some terminology.

\begin{notation} We denote by 
 $\sfS \colon \Rep(\balp_p) \to \Verp$ the semisimplification functor.
If $\fg$ is a finite dimensional Lie algebra and $x\in \fg$ is such that $\ad x$ has order at most $p$, then
\begin{itemize}[leftmargin=*]
\item $(\fg, x)$ denotes the Lie algebra in $\Rep(\balp_{p})$ obtained from $\fg$ by letting $t$ act via $\ad x$;
\item $\sfS (\fg, x)$ is the operadic Lie algebra in $\Ver_p$ obtained from semisimplification of $(\fg, x)$.
\end{itemize}
\end{notation}

Next we verify that for a finite dimensional contragredient Lie algebra $\fg(A)$, the Chevalley generators $e_i$, $f_i$ yield suitable derivations.

\begin{lemma}\label{lem:order-inner-derivation}
Let $A$ be such that $\dim \fg(A)<\infty$. Then $(\ad e_i)^{p}=(\ad f_i)^{p}=0$ for all $i\in\bI_\theta$.
\end{lemma}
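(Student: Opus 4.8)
The plan is to reduce first to the statement $(\ad e_i)^p=0$ and then exploit the $\bZ^\bI$-grading of $\fg=\fg(A)$. The Chevalley involution $\omega$ of \eqref{eq:Chevalley involution} satisfies $\omega\circ\ad e_i=-\ad f_i\circ\omega$, since $\omega(e_i)=-f_i$, so that $(\ad f_i)^p=(-1)^p\,\omega\circ(\ad e_i)^p\circ\omega^{-1}$; hence it suffices to treat $e_i$. Because $\ad e_i$ maps $\fg_\gamma$ into $\fg_{\gamma+\alpha_i}$ and $\fg=\fh\oplus\bigoplus_{\gamma\in\nabla}\fg_\gamma$ with $\dim\fg_\gamma=1$ for every root $\gamma$ by \Cref{prop:non degenerate odd}, I would check that $(\ad e_i)^p$ annihilates each homogeneous component separately.

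I would first isolate the components whose degree lies in $\bZ\alpha_i$. As the root system is reduced, these are exactly $\fg_{-\alpha_i}=\bk f_i$, $\fg_0=\fh$ and $\fg_{\alpha_i}=\bk e_i$, which assemble into the $\ad e_i$-stable subspace $\bk f_i\oplus\fh\oplus\bk e_i$. A direct computation gives the chains $f_i\mapsto h_i\mapsto -a_{ii}\,e_i\mapsto 0$ and $h\mapsto -\xi_i(h)\,e_i\mapsto 0$ for $h\in\fh$, so $(\ad e_i)^3$ already kills this subspace. For $p\geq 3$ this is enough; for $p=2$ the coefficient $-a_{ii}=-2$ vanishes, so in fact $(\ad e_i)^2=0$ there. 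Thus in every characteristic the order of $\ad e_i$ on this subspace is at most $p$.

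The heart of the argument is a root $\gamma\notin\bZ\alpha_i$: here I would show that $\gamma+p\alpha_i$ is neither a root nor $0$, whence $(\ad e_i)^p\fg_\gamma\subseteq\fg_{\gamma+p\alpha_i}=0$. That $\gamma+p\alpha_i\neq 0$ is clear since $\gamma\notin\bZ\alpha_i$. If $\gamma\in\nabla_+$, then a root $\gamma+p\alpha_i$ would have nonnegative coordinates, hence lie in $\varDelta_+^{(A,\bp)}$, and \Cref{lem:sum-of-roots}\ref{item:sum-of-roots-i} applied to the linearly independent positive roots $\alpha_i$ and $\gamma$ would force $p<p$; the symmetric case $\gamma\in\nabla_-$ is handled by passing to $f_{-\gamma}\in\fg_\gamma$ and using \Cref{lem:sum-of-roots}\ref{item:sum-of-roots-ii}. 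The sharp bound $n<p$ of part \ref{item:sum-of-roots-iii} is available because $\alpha_i\notin\varDelta^{(A,\bp)}_{\ond}$: for a genuine Lie algebra the parity is trivial and is preserved under every reflection, as $\widehat p_j=p_j\,p_i^{c_{ij}}=1$, so no point of the Weyl class has $\bq_i=-1$ and therefore $\varDelta_{\ond}=\varnothing$. The only routine bookkeeping is checking that a root $\gamma+p\alpha_i$ must have the same sign as $\gamma$ (a coordinate comparison forcing $\gamma\in\bZ\alpha_i$ otherwise), which is what lets \Cref{lem:sum-of-roots} be invoked with positive roots.

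I expect the main obstacle to be the control in small characteristic rather than the generic case. The potentially dangerous component is the $\bZ\alpha_i$-string through the Cartan, whose naive length is $3$ and could exceed $p$ only when $p=2$; this is exactly rescued by the vanishing of $-a_{ii}=-2$ computed above. The second point that must be verified, not merely assumed, is the emptiness of $\varDelta_{\ond}$, which is what upgrades the bound $n<2p$ to the sharp $n<p$ needed to conclude $(\ad e_i)^p=0$ throughout.
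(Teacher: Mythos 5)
Your proposal is correct, but it takes a genuinely different route from the paper's proof. The paper splits $\fg=\fn_+\oplus\fh\oplus\fn_-$: on $\fn_+\oplus\fh$ it uses the characteristic-$p$ fact that $(\ad e_i)^{p}$ is again a derivation, so it suffices to check $(\ad e_i)^{p}e_j=0$ for the generators, which is read off from the formulas for $c_{ij}$ in \Cref{rem:cij-explicit-formula}; for $\fn_-$ it transports the statement through the Lusztig-type isomorphism $T_i^{A}$ of \Cref{thm:isomorfismo Ti}, writing a negative root vector as $T_i^{A}(\fb_{\beta'})$ and $e_i$ as $T_i^{A}(\fb_i)$ up to scalar. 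You instead argue root space by root space in the $\bZ^{\bI}$-grading: for $\gamma\notin\bZ\alpha_i$ you rule out $\gamma+p\alpha_i\in\nabla^{(A,\bp)}$ using the sharp bound $n<p$ of \Cref{lem:sum-of-roots}\ref{item:sum-of-roots-iii}, which applies because $\varDelta^{(A,\bp)}_{\ond}=\emptyset$ when all parities equal $1$ --- a point you rightly verify, and which is also what guarantees $\nabla^{(A,\bp)}\cap\bZ\alpha_i=\{\pm\alpha_i\}$ (reducedness of $\varDelta^{(A,\bp)}$ alone says nothing about multiples inside $\nabla^{(A,\bp)}$, so strictly that observation should precede, not follow, your description of the $\bZ\alpha_i$-components); the degrees in $\bZ\alpha_i$ you handle by direct computation, with the $p=2$ case rescued by $a_{ii}=2=0$. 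Your sign-comparison step showing that $\gamma+p\alpha_i$ cannot jump from negative to positive unless $\gamma\in\bZ\alpha_i$ is also needed and correct. This route avoids both the derivation property of $p$-th powers and the isomorphisms $T_i^{A}$, at the cost of invoking \Cref{lem:sum-of-roots} (and the rank-two reduction behind it) on every root space; it is essentially the mechanism the paper itself uses later, in \Cref{rem:strings-partition}, to bound the lengths of maximal $\alpha_i$-strings by $p$. A minor bonus of your version is its explicit treatment of characteristic $2$, which the paper's step $(\ad e_i)^3f_i=0$ leaves implicit.
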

\begin{proof}
It is known that $(\ad x)^{p}$ is a derivation for all $x\in \fg$. Hence, to show that $(\ad e_i)^{p}$ annihilates $\fn_+$, 
it is enough to check that $(\ad e_i)^{p}e_j=0$ for all $j\in\bI$, which follows from  \Cref{rem:cij-explicit-formula}.
Also, for any $x\in\fh=\fg_0$, we have $(\ad e_i)^2 x \in (\ad e_i)\Bbbk e_i=0$.
Thus ${(\ad e_i)^{p}}_{|\fn_+\oplus\fh}=0$. Applying the Chevalley involution we get that ${(\ad f_i)^{p}}_{|\fn_-\oplus\fh}=0$.

To show that $(\ad e_i)^{p}$ annihilates $\fn_-$, consider a nonzero homogeneous $x\in\fg_{-\beta}$,  where $\beta\in\varDelta_+^A$. If $\beta=\alpha_i$, then $x\in \Bbbk f_i$, and since
\begin{align*}
(\ad e_i)^3 f_i &= (\ad e_i)^2 h_i = - a_{ii}[e_i,e_i]=0,
\end{align*}
we also have $(\ad e_i)^3 x=0$.
For the case $\beta\ne \alpha_i$, consider $\beta':=s_i(\beta)\in \varDelta_+^{ r_i A}$; we know that, up to a nonzero scalar, $x=T_i^{A}(\fb_{\beta'})$. Then
\begin{align*}
(\ad e_i)^{p}x &= \left(\ad T_i^{A}(\fb_i)\right)^{p}T_i^{A}(\fb_{\beta'}) = T_i^{A} \left( (\ad \fb_i)^{p}\fb_{\beta'} \right)=0.
\end{align*}
Thus ${(\ad e_i)^{p}}_{|\fn_-}=0$. Applying the Chevalley involution, we also have ${(\ad f_i)^{p}}_{|\fn_+}=0$.
\end{proof}

These two derivations turn out to yield isomorphic Lie algebras in $\Rep(\balp_p)$.

\begin{lemma} \label{lem:(g,ei)=(g,fi)}
Let $A\in \Bbbk^{\theta\times\theta}$ such that $\fg= \fg(A)$ is finite dimensional, and fix $i\in\bI_{\theta}$. Then the Chevalley involution 
$\omega \colon (\fg, e_i )\to (\fg, -f_i)$ is an isomorphism of Lie algebras in $\Rep(\balp_{p})$.
\end{lemma}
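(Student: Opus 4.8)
The plan is to verify the two things that constitute being an isomorphism of Lie algebras in $\Rep(\balp_p)$: that $\omega$ is an automorphism of the underlying Lie algebra $\fg$, and that it is a morphism in $\Rep(\balp_p)$, i.e.\ that it intertwines the prescribed actions of $t$. The first is already in hand: by property (i) recalled above, the Chevalley involution $\omega$ is an involutive Lie algebra automorphism of $\fg$ with $\omega(e_i)=-f_i$, $\omega(f_i)=-e_i$, and $\omega(h)=-h$ for $h\in\fh$ (here $p_i=1$, since $\fg$ is an ordinary Lie algebra). In particular $\omega$ is a linear isomorphism satisfying $\omega^2=\id$.

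Next I would observe that both $(\fg,e_i)$ and $(\fg,-f_i)$ are genuinely objects of $\Rep(\balp_p)$. By \Cref{lem:order-inner-derivation} we have $(\ad e_i)^p=(\ad f_i)^p=0$, whence $(\ad(-f_i))^p=(-1)^p(\ad f_i)^p=0$, so both $\ad e_i$ and $\ad(-f_i)$ have order at most $p$ and define $\balp_p$-actions. On the source $t$ acts by $\ad e_i$, while on the target $t$ acts by $\ad(-f_i)=-\ad f_i$. Consequently $\omega$ is a morphism $(\fg,e_i)\to(\fg,-f_i)$ in $\Rep(\balp_p)$ exactly when $\omega\circ\ad e_i=\ad(-f_i)\circ\omega$.

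The one real computation is the standard fact that a Lie algebra automorphism intertwines adjoint actions: for any automorphism $\varphi$ and element $x$, multiplicativity of $\varphi$ applied to $[x,y]$ gives $\varphi\circ\ad x=\ad(\varphi(x))\circ\varphi$. Specializing to $\varphi=\omega$ and $x=e_i$, and substituting $\omega(e_i)=-f_i$, produces precisely the identity $\omega\circ\ad e_i=\ad(-f_i)\circ\omega$ required above. Hence $\omega$ is a morphism in $\Rep(\balp_p)$; being an involutive linear isomorphism it is an isomorphism in $\Rep(\balp_p)$, and being simultaneously a Lie algebra map it is an isomorphism of Lie algebras in that category. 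I do not expect a genuine obstacle here: the whole statement reduces to the intertwining identity, and the only point demanding care is bookkeeping the sign, namely that the target is $(\fg,-f_i)$ rather than $(\fg,f_i)$ precisely so that the sign coming from $\omega(e_i)=-f_i$ is absorbed.
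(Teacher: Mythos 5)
Your proof is correct and follows essentially the same route as the paper: the paper's proof also reduces the statement to checking that $\omega$ is a morphism in $\Rep(\balp_p)$, which it says "follows directly from $\omega(e_i)=-f_i$" — precisely the intertwining identity $\omega\circ\ad e_i=\ad(\omega(e_i))\circ\omega=\ad(-f_i)\circ\omega$ that you spell out. Your additional remarks (citing \Cref{lem:order-inner-derivation} to confirm both sides are genuine $\balp_p$-representations, and the sign bookkeeping) are fine elaborations of details the paper leaves implicit.
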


\begin{proof}
Since $\omega\colon \fg \to \fg$ is an isomorphism of Lie algebras, it is enough to show that $\omega\colon (\fg, e_i )\to (\fg, -f_i)$ is a morphism in $\Rep (\balp_{p})$, which follows directly from $\omega (e_i)=-f_i$.
\end{proof} 

\begin{remark}\label{rem:reduction-to-connected}
Let $\bJ\subseteq \bI$ be the connected component of the Dynkin diagram containing a fixed index $i\in \bI_{\theta}$, and denote by $\widehat \bJ$ the complement of $\bJ$ in $\bI$. Then, as explained in \Cref{rem:subset-subalgebra-contragredient}, we have  $\fg=\fg(A_{\bJ})\oplus \fg(A_{\widehat \bJ})$, so
\begin{align*}
(\fg,e_i)&=(\fg(A_{\bJ}),e_i)\oplus (\fg(A_{\widehat \bJ}),e_i), & \sfS(\fg,e_i)&=\sfS(\fg(A_{\bJ}),e_i)\oplus \sfS(\fg(A_{\widehat \bJ}),e_i).
\end{align*}
Furthermore, since $\restr{(\ad e_i)}{\fg(A_{\widehat \bJ})}=0$, we see that $\sfS(\fg(A_{\widehat \bJ}),e_i)\simeq \fg(A_{\widehat \bJ})$ is an ordinary Lie algebra in $\Vec \subset \Ver_p$, similarly $(\fg(A_{\widehat \bJ}),e_i)\in \Vec\subset \Rep(\balp_p)$. 
\end{remark}

As a consequence, for most of the arguments in this section, we only need to consider matrices $A$ with connected Dynkin diagram.

\subsection{The structure of the Lie algebra \texorpdfstring{$(\fg' (A), e_i)$}{}}\label{subsec:g-in-repalphap}
In this subsection, we fix a finite-dimensional contragredient Lie algebra $\fg=\fg (A)$ and a Chevalley generator $e_i$, and we describe the structure of the Lie algebra $(\fg', e_i)$ in $\Rep (\balp_p)$, see \Cref{rem:derivedsubalgebra}.

As a first step, we show that the isotypic decomposition of $(\fg', e_i) \in \Rep (\balp_p)$ is determined by the roots. To describe it, we need to fix some terminology.

\begin{notation}\label{notation:strings} Fix $i\in\bI_{\theta}$ and a matrix $A\in \Bbbk^{\theta\times\theta}$. For each $j\in\bI_{p}$ consider 
\begin{align*}
\varDelta_{+,j}^{A} &= \left\{ \beta \ne \alpha_i  \vert \beta + k \alpha_i \in \varDelta_+^{A} \text{ for all } 0\leq k < j   \text{ and }  
\beta-\alpha_i, \beta + j \alpha_i \notin \varDelta_+^{A} \right\}, & a_j &=|\varDelta_{+,j}^{A}|.
\end{align*}
If $\beta \in \varDelta_{+,j}^{A}$, we say that $\{\beta + k \alpha_i \colon 0\leq k <j\}$ is a maximal $\alpha_i$-string. We also say that $\beta$ generates the string, and that $j$ is the length of the string.

The roots that generate $\alpha_i$-strings of length less than $p$ will play an important role. Let
\begin{align}\label{eq:Delta-min}
\varDelta_{+,\min}^{A} &=\bigcup_{j\in\bI_{p-1}} \varDelta_{+,j}^{A}, &  \varDelta_{-,\min}^{A}&=-\varDelta_{+,\min}^{A}, &  \varDelta_{\min}^{A}&=\varDelta_{+,\min}^{A} \bigcup \varDelta_{-,\min}^{A}.
\end{align}
\end{notation}

\begin{remark} \label{rem:strings-partition} Assume $\fg(A)$ is finite dimensional and fix $i\in\bI_\theta$.
\begin{itemize}[leftmargin=*]
\item Each positive root different from $\alpha_i$ belongs to a unique maximal $\alpha_i$-string. In fact, the existence of such a string follows from  \Cref{lem:sum-of-roots} \ref{item:sum-of-roots-iii} (with $\alpha=\alpha_i$), and the uniqueness follows from maximality.
\item Given $j\ne i$, the simple root $\alpha_j$ generates a maximal $\alpha_i$-string of length $1-c_{ij}^A$, see \eqref{eq:definition C}. 
\end{itemize} 
\end{remark}

\begin{proposition}\label{prop:semisimplification-multidimension} 
Fix $i\in\bI_{\theta}$ and a matrix $A\in \Bbbk^{\theta\times\theta}$ such that $\fg= \fg(A)$ is finite dimensional. 
\begin{enumerate}[leftmargin=*,label=\rm{(\alph*)}]
\item\label{item:semisimplification-multidimension-i} For each $\beta \in \varDelta_{+,j}^{A}$, the subspaces 
\begin{align*}
M_{\beta} &:= \oplus_{k=0}^{j-1} \Bbbk e_{\beta+k\alpha_i}, &
N_{\beta} &:= \oplus_{k=0}^{j-1} \Bbbk f_{\beta+k\alpha_i},
\end{align*}
are $\balp_{p}$-submodules of $(\fg', e_i)$, and both are isomorphic to $L_j$.
\item\label{item:semisimplification-multidimension-ii} If $a_{ii}=2$, then $\bk f_i\oplus \bk h_i \oplus \Bbbk e_i$ is a submodule of $(\fg', e_i)$ isomorphic to $L_3$, any one-dimensional subspace of $\ker \xi_i\cap \fh_{\leq \theta}$ is a submodule isomorphic to $L_1$ and 
\begin{align*}
(\fg', e_i) &\simeq L_1^{2a_1 + \theta -1} \oplus L_3^{2a_3+1}\oplus\left(\oplus_{j\in\bI_{p}-\{1,3\}} L_j^{2a_j}\right) \text { in }\Rep(\balp_p).
\end{align*}
\item\label{item:semisimplification-multidimension-iii} If $a_{ii}=0$ and there exists $j$ such that $a_{ji}\ne 0$, then $\bk f_i\oplus \bk h_i $ is a submodule of $(\fg', e_i)$ isomorphic to $L_2$, there is $h \in \fh_{\leq\theta}$ such that  $\bk h \oplus \bk e_i $ is a submodule isomorphic to $L_2$, any one-dimensional subspace of $\ker \xi_i \cap \fh_{\leq \theta }$ different from $\bk h_i$ is a submodule isomorphic to $L_1$ and 
\begin{align*}
(\fg', e_i)&\simeq L_1^{2a_1 + \theta -2} \oplus L_2^{2a_2+2}\oplus\left(\oplus_{j\in\bI_{p}-\{1,2\}} L_j^{2a_j}\right) \text { in }\Rep(\balp_p).
\end{align*}

\end{enumerate}
\end{proposition}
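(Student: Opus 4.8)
The plan is to use the root-space decomposition of $\fg'(A)$ and to organize the basis of root vectors $\{e_\beta, f_\beta\}$ according to the maximal $\alpha_i$-strings introduced in \Cref{notation:strings}. The key structural input is \Cref{lem:sum-of-roots}: for $\beta\in\varDelta_{+,j}^A$, parts \ref{item:sum-of-roots-i} and \ref{item:sum-of-roots-iii} show that $(\ad e_i)^k e_\beta$ is a nonzero scalar multiple of $e_{\beta+k\alpha_i}$ for $0\le k<j$, while $(\ad e_i)^j e_\beta$ lands in $\Bbbk e_{\beta+j\alpha_i}=0$ since $\beta+j\alpha_i\notin\varDelta_+^A$. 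This is exactly the statement that $M_\beta$ is an $\balp_p$-submodule with cyclic generator $e_\beta$ acting like a single Jordan block, hence $M_\beta\simeq L_j$; after rescaling the root vectors we may even arrange $(\ad e_i)e_{\beta+k\alpha_i}=e_{\beta+(k+1)\alpha_i}$ so the basis is genuinely cyclic. The argument for $N_\beta$ is identical using part \ref{item:sum-of-roots-ii}, now with $(\ad e_i)$ acting on the negative root vectors $f_\beta$ and lowering the root by $\alpha_i$. This disposes of \ref{item:semisimplification-multidimension-i}.

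For \ref{item:semisimplification-multidimension-ii} and \ref{item:semisimplification-multidimension-iii} I would first treat the ``Cartan plus $\alpha_i$-line'' part of $\fg'$ separately, since it is not covered by the strings above. When $a_{ii}=2$, a direct computation gives $(\ad e_i) f_i = h_i$ (up to the normalization $[e_i,f_i]=h_i$), $(\ad e_i) h_i = [e_i,h_i] = -\xi_i(h_i)e_i = -a_{ii}e_i = -2e_i$, and $(\ad e_i) e_i = 0$; thus $\Bbbk f_i\oplus\Bbbk h_i\oplus\Bbbk e_i$ is a single $L_3$ with cyclic generator $f_i$ (here $p>2$ is used so that $-2\ne 0$). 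On the complementary part of the Cartan subalgebra $\fh_{\le\theta}$, I would show $\ad e_i$ acts trivially: for $h\in\fh_{\le\theta}$ one has $(\ad e_i)h = -\xi_i(h)e_i$, so $h$ is killed by $\ad e_i$ precisely when $\xi_i(h)=0$, i.e.\ on $\ker\xi_i\cap\fh_{\le\theta}$, giving a supply of trivial $L_1$ summands. Since $\fh_{\le\theta}$ has dimension $\theta$ and $h_i\notin\ker\xi_i$ (as $\xi_i(h_i)=a_{ii}=2\ne 0$), the space $\ker\xi_i\cap\fh_{\le\theta}$ is $(\theta-1)$-dimensional; one of these dimensions is already accounted for inside the $L_3$ above once $f_i$ and $e_i$ are attached, so the remaining copies of $L_1$ coming from the Cartan number $\theta-1$, and these combine with the $L_1$'s coming from the length-one strings to give the stated multiplicity $2a_1+\theta-1$.

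When $a_{ii}=0$ and some $a_{ji}\ne 0$, the generators $e_i,f_i,h_i$ no longer assemble into $\fsl(2)$ but into the Heisenberg case. Here $(\ad e_i) f_i = h_i$ and $(\ad e_i) h_i = -\xi_i(h_i)e_i = -a_{ii}e_i = 0$, so $\Bbbk f_i\oplus\Bbbk h_i$ is a cyclic $L_2$ generated by $f_i$ while $e_i$ is no longer reached from $h_i$. To produce the \emph{other} $L_2$ containing $e_i$ I would use the hypothesis $a_{ji}\ne 0$: since $\xi_i$ is nonzero on $\fh$ (indeed $\xi_i(h_j)=a_{ji}\ne 0$ for the relevant $j$, using the realization condition \ref{item:contragredient-4} read off the appropriate row), one can pick $h\in\fh_{\le\theta}$ with $(\ad e_i)h = -\xi_i(h)e_i$ a nonzero multiple of $e_i$; then $\Bbbk h\oplus\Bbbk e_i\simeq L_2$. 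The remaining Cartan directions inside $\ker\xi_i\cap\fh_{\le\theta}$, excluding the already-used $h_i$, contribute trivial summands, and a dimension count analogous to the previous case yields the multiplicity $2a_1+\theta-2$ of $L_1$ and the corrected multiplicity $2a_2+2$ of $L_2$ (two extra copies from the two pieces of the Cartan/$\alpha_i$-line block). I expect the main subtlety to be the careful bookkeeping of the Cartan subalgebra: one must verify that the two Heisenberg $L_2$'s together with the $L_1$'s exactly exhaust $\fh_{\le\theta}\oplus\Bbbk e_i\oplus\Bbbk f_i$ with the right total dimension, and in particular that the element $h$ chosen for the second $L_2$ can be taken inside $\fh_{\le\theta}$ and independent from $h_i$ modulo $\ker\xi_i$; this is where the hypothesis that $A$ (equivalently its $i$-th column) is not identically zero is genuinely needed, and it is the one place where a clean invariant argument must replace a naive basis manipulation.
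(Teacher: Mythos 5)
Your proof is correct and takes essentially the same route as the paper's: part (a) follows from the $\bZ^{\theta}$-grading (which forces the string endpoints to be killed by $\ad e_i$) combined with \Cref{lem:sum-of-roots}, and parts (b), (c) follow by splitting $\fg'$ into the string modules plus the block $\Bbbk f_i\oplus\fh_{\leq\theta}\oplus\Bbbk e_i$, computing that block explicitly ($L_3$ when $a_{ii}=2$, two $L_2$'s when $a_{ii}=0$), and counting the trivial summands inside $\ker\xi_i\cap\fh_{\leq\theta}$. One wording slip worth fixing: in case (b) the copy of $L_3$ uses the direction $\Bbbk h_i$, which is a \emph{complement} of $\ker\xi_i\cap\fh_{\leq\theta}$ in $\fh_{\leq\theta}$ rather than one of its dimensions, but your final multiplicities are exactly the stated ones.
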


\begin{proof}
\ref{item:semisimplification-multidimension-i} Fix $j \in \bI_p$ and $\beta \in \varDelta_{+,j}^{A}$. Using that $\fg$ is $\mathbb Z^{\theta}$-graded, since $\beta-\alpha_i$ and $\beta + j \alpha_i$ are not roots, we have
\begin{align*}
(\ad e_i)f_{\beta} &=0, & (\ad e_i)e_{\beta+(j-1)\alpha_i}&=0;
\end{align*}
so $N_{\beta}$ and $M_{\beta}$ are $\balp_{p}$-submodules of $\fg$. Using these last computations and Lemma \ref{lem:sum-of-roots}, it becomes evident that these submodules are isomorphic to the nilpotent Jordan block $L_j$.

To prove \ref{item:semisimplification-multidimension-ii} and \ref{item:semisimplification-multidimension-iii}, we first observe that 
$\oplus_{\beta\ne\alpha_i} \Bbbk e_{\beta}$ and $\oplus_{\beta\ne\alpha_i} \Bbbk f_{\beta}$ are $\balp_{p}$-submodules of $\fg$. Indeed, by \Cref{rem:strings-partition} we have
\begin{align*}
\oplus_{\beta\ne\alpha_i} \bk e_{\beta} &= \bigoplus_{j\in\bI_{p}}\left( \oplus_{\beta\in \varDelta_{+,j}^{A}} M_{\beta} \right), &
\oplus_{\beta\ne\alpha_i} \bk f_{\beta} &= \bigoplus_{j\in\bI_{p}}\left( \oplus_{\beta\in \varDelta_{+,j}^{A}} N_{\beta} \right).
\end{align*}
The subspace $\bk f_i\oplus\fh\oplus \bk e_i$ is also an $\balp_{p}$-submodules of $\fg$, so
$$\fg' = \left(\oplus_{\beta\ne\alpha_i} \Bbbk e_{\beta}\right) \oplus \left(\oplus_{\beta\ne\alpha_i} \bk f_{\beta}\right)\oplus \left(\Bbbk f_i \oplus \fh_{\leq\theta} \oplus \bk e_i\right)$$
as an $\balp_{p}$-module. It only remains to compute the isotypic components of $\Bbbk f_i\oplus\fh_{\leq\theta}\oplus \bk e_i$. 

First, we assume that $a_{ii}=2$. In this case $\fh_{\leq\theta} = \bk h_i \oplus (\ker \xi_i\cap \fh_{\leq\theta})$ as vector spaces and $\{f_i,h_i,e_i\}\simeq \mathfrak{sl}_2$, hence $\bk f_i\oplus \bk h_i \oplus \bk e_i \simeq L_3$ as an object in $\Rep(\balp_p)$. 
Also, $(\ker \xi_i)\cap \fh_{\leq \theta}$ is a sum of $\theta-1$ copies of $L_1$, so \ref{item:semisimplification-multidimension-ii} holds.

Now consider $a_{ii}=0$. By hypothesis, there is $h\in\fh_{\leq \theta}$ such that $\xi_i (h)=1$. Thus we have a linear complement $\fh_{\leq \theta}=\bk h \oplus (\ker \xi_i \cap \fh_{\leq \theta})$. Since $[h,e_i]=e_i$, the subspace spanned by $h$ and $e_i$ is an 
$\balp_{p}$-submodule of $\fg$ isomorphic to $L_2$. Also, since $[h_i,e_i]=0$, the subspace spanned by $h_i$ and $f_i$ is an 
$\balp_{p}$-submodule of $\fg$ isomorphic to $L_2$. Finally, any complement of $\Bbbk h_i$ in $\ker \xi_i\cap \fh_{\leq \theta}$ is a sum of $\theta-2$ copies of $L_1$ and \ref{item:semisimplification-multidimension-iii} follows.
\end{proof}

We show next that the submodules $M_\beta$, $N_\beta$ can be Lie-generated by submodules associated to simple roots.

\begin{proposition}\label{prop:generation-degree1-alphap}
If $\beta$ is an $\alpha_i$-string generator, then $M_\beta$ is contained in the $\Rep(\balp_p)$-Lie subalgebra of $(\fg', e_i)$ generated by the submodules $M_{\alpha_j}$, where $j\ne i$. 
\end{proposition}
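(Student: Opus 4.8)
Write $\cL$ for the $\Rep(\balp_p)$-Lie subalgebra of $(\fg', e_i)$ generated by the submodules $M_{\alpha_j}$, $j\ne i$. The feature of this setting that I would exploit is that $\cL$ is by definition a \emph{sub-object} of $(\fg',e_i)$ in $\Rep(\balp_p)$, so it is stable not only under the bracket but also under the action of $t$, that is, under $\ad e_i$. For a string generator $\beta$, Proposition \ref{prop:semisimplification-multidimension}\ref{item:semisimplification-multidimension-i} together with Lemma \ref{lem:sum-of-roots}\ref{item:sum-of-roots-i} shows that $M_\beta=\langle e_\beta\rangle$ is cyclically generated by $e_\beta$ (each step $e_{\beta+k\alpha_i}=c\,(\ad e_i)^k e_\beta$ with $c\ne0$). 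Hence, since $\cL$ is $\ad e_i$-stable, the whole proposition reduces to the single claim that $e_\beta\in\cL$. For $\gamma=\sum_k a_k\alpha_k\in\varDelta_+^A$ put $n(\gamma)=\sum_{k\ne i}a_k$; every string generator has $n(\beta)\ge1$, so it suffices to prove the stronger statement that $e_\gamma\in\cL$ for \emph{every} positive root $\gamma$ with $n(\gamma)\ge1$.

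I would prove this by induction on $\height(\gamma)$. If $n(\gamma)=1$, then $\gamma=\alpha_j+m\alpha_i$ for a unique $j\ne i$, so by Remark \ref{rem:strings-partition} the root $\gamma$ lies on the maximal $\alpha_i$-string generated by $\alpha_j$; thus $e_\gamma\in M_{\alpha_j}\subseteq\cL$ and there is nothing to prove. If $n(\gamma)\ge2$, I use that $\fn_+$ is generated in degree one by the Chevalley generators $e_k$ (Remark \ref{rem:derivedsubalgebra}), which gives the standard degree-one decomposition $\fg_\gamma=\sum_k[\fg_{\alpha_k},\fg_{\gamma-\alpha_k}]$, valid for $\height(\gamma)\ge2$ and for every one-dimensional root space, including the spaces $\fg_{2\delta}$ with $\delta$ odd non-degenerate by Proposition \ref{prop:non degenerate odd}. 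Writing $e_\gamma$ as a combination of brackets $[e_k,e_{\gamma-\alpha_k}]$, I treat each nonzero term according to whether $k\ne i$ or $k=i$:
\begin{itemize}[leftmargin=*]
\item if $k\ne i$, then $e_k\in\cL$, while $\gamma-\alpha_k$ has strictly smaller height and $n(\gamma-\alpha_k)=n(\gamma)-1\ge1$, so $e_{\gamma-\alpha_k}\in\cL$ by induction and $[e_k,e_{\gamma-\alpha_k}]\in\cL$;
\item if $k=i$, then $[e_i,e_{\gamma-\alpha_i}]=(\ad e_i)\,e_{\gamma-\alpha_i}$, where $\gamma-\alpha_i$ has strictly smaller height and $n(\gamma-\alpha_i)=n(\gamma)\ge2$, so $e_{\gamma-\alpha_i}\in\cL$ by induction; since $\cL$ is $\ad e_i$-stable, $(\ad e_i)\,e_{\gamma-\alpha_i}\in\cL$.
\end{itemize}
In either case $e_\gamma\in\cL$, completing the induction.

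The step I expect to be the crux is the term $k=i$: subtracting $\alpha_i$ does not decrease $n(\gamma)$, so an induction run on $n$ alone would stall exactly there. This is resolved precisely by recording that $\cL$ is a sub-object in $\Rep(\balp_p)$ and hence closed under $\ad e_i=t$ — the one place where the ambient $\balp_p$-module structure, rather than merely the ordinary Lie bracket, is indispensable. The remaining points to verify carefully are routine: that $M_\beta$ is genuinely cyclic with generator $e_\beta$ (the nonvanishing in Lemma \ref{lem:sum-of-roots}\ref{item:sum-of-roots-i} with $\alpha=\alpha_i$), that in the case $n(\gamma)\ge2$ no nonzero term forces $\gamma-\alpha_k=\alpha_i$ with $k\ne i$ (which would demand $e_i\in\cL$), this being excluded because such $\gamma$ would satisfy $n(\gamma)=1$, and that the degree-one generation of $\fn_+$ holds uniformly in the doubled root spaces. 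Running the induction on $\height(\gamma)$ rather than on $n(\gamma)$ keeps all of these under control.
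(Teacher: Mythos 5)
Your proof is correct, but it follows a genuinely different route from the paper's. The paper also inducts on height, but on the height of the string generator $\beta$ itself: it writes $\beta=\alpha+\gamma$ as a sum of two positive roots (both automatically independent of $\alpha_i$, since $\beta-\alpha_i\notin\varDelta_+^{A}$ for a string generator), gets $e_\beta\in\bk[e_\alpha,e_\gamma]$ from Lemma \ref{lem:sum-of-roots}, and then uses the $\ad e_i$-stability of the string modules $M_{\widetilde\alpha}$, $M_{\widetilde\gamma}$ through $\alpha$, $\gamma$ to conclude $M_\beta\subseteq[M_{\widetilde\alpha},M_{\widetilde\gamma}]$; the induction thus never leaves the class of string-generator modules and no analogue of your $k=i$ terms ever appears. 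You instead strengthen the statement to all root vectors $e_\gamma$ with $n(\gamma)\ge1$ and decompose via the degree-one generation $\fg_\gamma=\sum_k[\fg_{\alpha_k},\fg_{\gamma-\alpha_k}]$ of $\fn_+$, absorbing the $k=i$ terms by $t$-stability of $\cL$. The trade-off: your route needs only the tautological fact that $\fn_+$ is generated by the $e_k$'s, whereas the paper's needs the root-system fact that every positive root of height at least two is a sum of two positive roots, which for these generalized root systems is not completely obvious (the paper asserts it without reference; it can be checked by the rank-two reduction of Theorem \ref{thm:cuntz-heck}). Conversely, the paper's argument yields the finer conclusion that each $M_\beta$ lies in a \emph{single} bracket $[M_{\widetilde\alpha},M_{\widetilde\gamma}]$ of two string modules, which is the form in which the proposition is actually exploited later (Claim \ref{claim:bracket-not-zero} in the proof of Theorem \ref{thm:ss-description-S(g)}). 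One small citation fix on your side: in the case $n(\gamma)=1$, Remark \ref{rem:strings-partition} only provides the unique maximal string through $\gamma=\alpha_j+m\alpha_i$; to identify its generator as $\alpha_j$ (i.e.\ that the string is unbroken) you should invoke Lemma \ref{lem:sum-of-roots}\ref{item:sum-of-roots-i}, which gives $(\ad e_i)^m e_j=c\,e_\gamma$ with $c\ne0$ and hence that all intermediate $\alpha_j+k\alpha_i$ are roots --- or simply note that this identity already puts $e_\gamma\in\cL$ directly, by the $t$-stability of $\cL$.
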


\begin{proof}
The proof is by induction in $\height (\beta)$. The base case $\height (\beta)=1$, i.e. $\beta$ a simple root, is clear. 

For $\height( \beta) >1$, there exist positive roots $\alpha$ and $\gamma$ such that $\beta = \alpha + \gamma$; notice that both roots have smaller height and are independent with $\alpha_i$. Now,  $\alpha$ and $\gamma$ belong to some $\alpha_i$-string, say $\alpha= \widetilde \alpha + c \alpha_i$ and  $\gamma= \widetilde \gamma + d \alpha_i$ where $\widetilde \alpha$, $\widetilde \gamma$ are string generators. 
By Lemma \ref{lem:sum-of-roots}, $e_\beta\in \bk[e_\alpha, e_\gamma] \subset [M_{\widetilde \alpha}, M_{\widetilde \gamma}]$.
Next, using the Jacobi identity, we see that for each $\beta + h \alpha_i$ in the string generated by $\beta$, we also have 
\begin{align*}
e_{\beta + h \alpha_i}\in \Bbbk (\ad e_i)^h e_\beta \subset \Bbbk (\ad e_i)^h[e_\alpha, e_\gamma] \subset [M_{\widetilde \alpha}, M_{\widetilde \gamma}],
\end{align*}
since $M_{\widetilde \alpha}$ and $M_{\widetilde \gamma}$  are $\ad e_i$-stable.
This means that $M_\beta \subset [M_{\widetilde \alpha}, M_{\widetilde \gamma}]$, and now the proposition follows by induction and the Jacobi identity.
\end{proof}

Motivated by Propositions \ref{prop:semisimplification-multidimension} and \ref{prop:generation-degree1-alphap}, we describe how the Lie bracket of $(\fg', e_i)$ behaves when restricted to tensor products of some simple submodules.
We describe the adjoint action of an element in $\ker \xi_i$ as a first step.

\begin{lemma}\label{lem:torus-action-alphap}
Let $h\in\ker\xi_i$. If $\beta$ is an $\alpha_i$-string generator, then $h$ acts on $M_\beta$ by 
$\xi_{\beta}(h)$, and on $N_\beta$ by $-\xi_{\beta}(h)$.
\end{lemma}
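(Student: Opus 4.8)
The plan is to verify the two claimed eigenvalue statements directly from the root-space description of $\fg$, exploiting that $M_\beta$ and $N_\beta$ are spanned by root vectors whose $\fh$-action is known explicitly from \eqref{eq:action-h-on-root-vector}. First I would recall that $M_\beta=\oplus_{k=0}^{j-1}\Bbbk e_{\beta+k\alpha_i}$, so it suffices to compute $[h, e_{\beta+k\alpha_i}]$ for each $k$. By \eqref{eq:action-h-on-root-vector} we have $[h,e_{\beta+k\alpha_i}]=\xi_{\beta+k\alpha_i}(h)\,e_{\beta+k\alpha_i}$, and since $\xi$ is linear in the root, $\xi_{\beta+k\alpha_i}(h)=\xi_\beta(h)+k\,\xi_{\alpha_i}(h)=\xi_\beta(h)+k\,\xi_i(h)$.

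The key observation is then that $h\in\ker\xi_i$ forces $\xi_i(h)=0$, so the $k$-dependent term vanishes and every basis vector $e_{\beta+k\alpha_i}$ is an eigenvector with the \emph{same} eigenvalue $\xi_\beta(h)$. Hence $h$ acts on all of $M_\beta$ as the scalar $\xi_\beta(h)$, which is precisely the claim. The argument for $N_\beta=\oplus_{k=0}^{j-1}\Bbbk f_{\beta+k\alpha_i}$ is identical, using the second equation in \eqref{eq:action-h-on-root-vector}, namely $[h,f_{\beta+k\alpha_i}]=-\xi_{\beta+k\alpha_i}(h)f_{\beta+k\alpha_i}=-\bigl(\xi_\beta(h)+k\,\xi_i(h)\bigr)f_{\beta+k\alpha_i}=-\xi_\beta(h)f_{\beta+k\alpha_i}$, again because $\xi_i(h)=0$. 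This gives that $h$ acts on $N_\beta$ by the scalar $-\xi_\beta(h)$.

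I do not expect any genuine obstacle here; the statement is essentially a bookkeeping consequence of the weight-vector formula \eqref{eq:action-h-on-root-vector} combined with linearity of $\xi$ and the hypothesis $h\in\ker\xi_i$. The only point requiring a word of care is confirming that the root vectors $e_{\beta+k\alpha_i}$ chosen in \Cref{rmk:root vectors} genuinely lie in the root spaces $\fg_{\beta+k\alpha_i}$ so that \eqref{eq:action-h-on-root-vector} applies verbatim; this is guaranteed by the normalization $e_\gamma\in\fg_\gamma$, $f_\gamma\in\fg_{-\gamma}$ recorded there. Thus the whole proof reduces to substituting the string decomposition into \eqref{eq:action-h-on-root-vector} and cancelling the $\xi_i(h)=0$ contribution.
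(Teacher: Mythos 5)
Your proof is correct, but it follows a different route from the paper's. The paper only uses \eqref{eq:action-h-on-root-vector} at the string generator, i.e.\ the base case $[h,e_\beta]=\xi_\beta(h)e_\beta$, and then propagates the eigenvalue along the string by induction on $k$: it writes $e_{\beta+k\alpha_i}\in\bk[e_i,e_{\beta+(k-1)\alpha_i}]$ and applies the Jacobi identity together with $[h,e_i]=\xi_i(h)e_i=0$; the statement for $N_\beta$ is then deduced via the Chevalley involution $\omega$ rather than computed directly. You instead apply \eqref{eq:action-h-on-root-vector} to \emph{every} root vector $e_{\beta+k\alpha_i}$, $f_{\beta+k\alpha_i}$ in the string and use the linearity $\xi_{\beta+k\alpha_i}=\xi_\beta+k\xi_i$ (immediate from the definition $\xi_\gamma=\sum_j a_j\xi_j$ in \Cref{rmk:root vectors}) together with $\xi_i(h)=0$, which collapses the whole argument to a one-line cancellation. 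Both arguments are legitimate: the vectors $\beta+k\alpha_i$ are genuine positive roots and the basis of $M_\beta$, $N_\beta$ in \Cref{prop:semisimplification-multidimension} consists of the root vectors fixed in \Cref{rmk:root vectors}, so \eqref{eq:action-h-on-root-vector} applies verbatim as you note. Your version is shorter and avoids both the induction and the Chevalley involution; the paper's version uses only the weight of the string generator, so it would still go through if one knew \eqref{eq:action-h-on-root-vector} only for string generators, but in the present setting that extra economy buys nothing.
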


\begin{proof}
We verify first the claim concerning $M_\beta$. Let $\{\beta+k\alpha_i \colon 0\leq k <d\}$ denote the $\alpha_i$-string through $\beta$. By \eqref{eq:action-h-on-root-vector}, $[h,e_\beta]=\xi_{\beta}(h) e_\beta$.  We show that $[h, e_{\beta+k\alpha_i}]=\xi_{\beta}(h)  e_{\beta+k\alpha_i}$ for all such $k$. For $k=1$, we write $e_{\beta+\alpha_i} = \mu  [e_\beta, e_i]$ and, since $h\in\ker\xi_i$, we get
\begin{align*}
[h, e_{\beta+\alpha_i}]=\mu [[h,e_\beta],e_i] + \mu [e_\beta, [h,e_i]] = \mu \xi_{\beta}(h)  [e_\beta,e_i]=\xi_{\beta}(h) e_{\beta+\alpha_i}.
\end{align*}
The proof now follows inductively on $k$ using that $e_{\beta+k\alpha_i} \in \bk [e_i,e_{\beta+(k-1)\alpha_i}]$. 

The claim regarding the action on $N_\beta$ follows via the Chevalley involution.
\end{proof}

If $M, N$ are submodules of $(\fg', e_i)$, $[M,N]$ denotes the image of  $\restr{[-,-]}{M \otimes N}\colon M \otimes N \to \fg'$. 

In the next couple of Lemmas, we describe $[-,-]\in \Hom_{\balp_p} ( M_{\alpha_j}\otimes N_{\alpha_k}, \fg')$ for $j$, $k\in \bI_\theta$, both different from $i$. The case $j\ne k$ is straightforward:

\begin{lemma}\label{lem:bracket-different-alphap}
Let $j, k\in \bI$, both different from $i$. If $j\ne k$, then $[M_{\alpha_j}, N_{\alpha_k}]=0$.
\end{lemma}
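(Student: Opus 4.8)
The plan is to use the $\bZ^{\theta}$-grading of $\fg$ to force the bracket to vanish on degree grounds. The key observation is that the submodules $M_{\alpha_j}$ and $N_{\alpha_k}$ are built from root vectors living in specific graded pieces of $\fg$, so every element of $[M_{\alpha_j}, N_{\alpha_k}]$ is a sum of homogeneous elements whose degrees I can track explicitly.

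First I would spell out the degrees. By \Cref{rem:strings-partition}, the simple root $\alpha_j$ generates the $\alpha_i$-string $\{\alpha_j + \ell\alpha_i \colon 0 \leq \ell < 1 - c_{ij}^A\}$, so that $M_{\alpha_j} = \oplus_{\ell} \bk\, e_{\alpha_j + \ell\alpha_i}$ with $e_{\alpha_j+\ell\alpha_i}\in\fg_{\alpha_j+\ell\alpha_i}$, and likewise $N_{\alpha_k} = \oplus_m \bk\, f_{\alpha_k + m\alpha_i}$ with $f_{\alpha_k+m\alpha_i}\in\fg_{-\alpha_k-m\alpha_i}$. Since the bracket respects the grading, I have
\begin{align*}
[e_{\alpha_j+\ell\alpha_i}, f_{\alpha_k+m\alpha_i}] \in \fg_{\alpha_j - \alpha_k + (\ell-m)\alpha_i}.
\end{align*}
Thus it suffices to show that $\alpha_j - \alpha_k + (\ell-m)\alpha_i$ is never the degree of a nonzero element of $\fg$, for the relevant ranges of $\ell, m$.

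The main step is then the degree analysis. Because $j \neq k$ and both differ from $i$, the coefficient of $\alpha_j$ in $\gamma := \alpha_j - \alpha_k + (\ell-m)\alpha_i$ is $+1$ while the coefficient of $\alpha_k$ is $-1$. Any nonzero $\fg_\gamma$ forces $\gamma \in \nabla^{(A)}$, hence $\gamma \in \bN_0^\theta \cup (-\bN_0^\theta)$ by \eqref{eq:prop1 roots}; but a vector with a strictly positive $\alpha_j$-coefficient and a strictly negative $\alpha_k$-coefficient lies in neither cone. The only escape is $\gamma = 0$, which is impossible since the $\alpha_j$-coefficient is $1 \neq 0$ (here I use $j \neq k$ and $j,k\neq i$, so the $\alpha_j$-entry genuinely comes only from the first term). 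Therefore $\fg_\gamma = 0$ for all $\ell, m$, and every generating bracket vanishes, giving $[M_{\alpha_j}, N_{\alpha_k}] = 0$.

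I expect the only real subtlety to be bookkeeping with the $\alpha_i$-coefficient $\ell - m$: one must be sure that this coefficient cannot conspire to move $\gamma$ back into a root, but since the $\alpha_j$ and $\alpha_k$ coefficients already obstruct membership in $\bN_0^\theta\cup(-\bN_0^\theta)$ regardless of the $\alpha_i$-entry, the argument is insensitive to the precise value of $\ell - m$. This is where the hypothesis $j \neq k$ is essential; had $j = k$ the two offending coefficients would combine and the degree could land on a genuine root (indeed on a multiple of $\alpha_i$ through the Cartan element $h_j$), which is exactly the case deferred to the following lemma.
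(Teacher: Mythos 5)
Your proposal is correct and is essentially the paper's own argument: the paper likewise notes that $[(\ad e_i)^n e_j, (\ad f_i)^m f_k]$ lies in degree $(n-m)\alpha_i+\alpha_j-\alpha_k$, which cannot be a root, and your write-up merely spells out why (positive $\alpha_j$-coefficient and negative $\alpha_k$-coefficient exclude both cones $\pm\bN_0^\theta$, and $\gamma\neq 0$ rules out landing in $\fh$).
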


\begin{proof}
If $n, m\geq 0$, then $[(\ad e_i)^n e_j, (\ad f_i)^m f_k]=0$ because $(n-m)\alpha_i+\alpha_j-\alpha_k$ can not be a root.
\end{proof}

On the other hand, when $j=k$ a more careful analysis is needed.

\begin{notation}
If $a\in \bk$ and $n\in\bN$ we denote $(n)_a=n+a$ and  $(n)^!_a=(n)_a(n-1)_a\cdots(1)_a$.
\end{notation}
\begin{remark}\label{rem:bracket-same-alphap}
Assume that $a_{ii}=2$. Let $j\in \bI_\theta$ different from $i$. Then the bracket in $\fg(A)$ satisfies:
\begin{center}
\begin{tabular}{c|c c c c}
$[-,-]$    & $f_{j}$       & $f_{ij}$      & $f_{iij}$     & $f_{iiij}$\\
\hline
 $e_{j}$   & $h_j$         & $a_{ji} f_i$   & 0            &       0\\
 $e_{ij}$  &  $-a_{ji}e_i$ &$a_{ij}h_j+a_{ji}h_i$ & $2a_{ji}(1)_{a_{ij}}f_i$ & 0\\
 $e_{iij}$  & 0     &  $-2a_{ji}(1)_{a_{ij}}e_i$ &  $2(1)_{a_{ij}} (a_{ij}h_j+2a_{ji}h_i)$ & $6a_{ji}(2)^!_{a_{ij}}f_i$\\  
 $e_{iiij}$ &  0 & 0 & $-6a_{ji}(2)^!_{a_{ij}}e_i$ & $6(2)^!_{a_{ij}} (a_{ij}h_j+3a_{ji}h_i)$
\end{tabular}
\end{center}
\end{remark}

We fix cyclic bases for the indecomposable submodules appearing in \Cref{prop:semisimplification-multidimension} \ref{item:semisimplification-multidimension-ii}.

\begin{notation}\label{notation:indecomposables-bases}
Assume $\fg$ is finite dimensional. Let $i\in\bI_\theta$ be such that $a_{ii}=2$. Let $j\in\bI_\theta$ be different from $i$. Then
\begin{itemize}[leftmargin=*]
\item $M_{\alpha_j}=\langle e_j\rangle = \bk\{e_j, (\ad e_i)e_{j}, \dots,  (\ad e_i)^{-c_{ij}}e_{j}\}$.
\item $N_{\alpha_j}=\langle (\ad f_i)^{-c_{ij}}f_{j}\rangle =\bk\{(\ad f_i)^{-c_{ij}}f_{j}, (\ad f_i)^{1-c_{ij}}f_{j}, \dots,f_{j} \}$.
\item $S$ denotes the copy of $L_3$ generated by $e_i$, which has a cyclic basis $\{f_i, h_i, -2e_i\}$.
\item Write $\widetilde{h}_j= h_j$ if $a_{ij}=0$, and $\widetilde{h}_j=2h_j-a_{ji}h_i$ if $a_{ij}\ne 0$. Then $\widetilde h_j\in\ker \xi_i$ and generates a copy of $L_1$.
\item For any direct summand $X$ of $\fg'$, denote by $\iota_{X} \colon X \hookrightarrow \fg'$ the canonical inclusion. If $Y$ is another object we also denote by $\iota_{X}$ the composition $X\oplus Y\twoheadrightarrow X \hookrightarrow \fg'$.
\end{itemize}
\end{notation}

To obtain an explicit expression for the cyclic basis of $N_{\alpha_j}$, one uses the following:
\begin{remark}\label{rem:bracket-cyclic-N}
Assume that $a_{ii}=2$. Let $j\in \bI$ different from $i$. Then
\begin{align*}
[e_i, f_{ij}]&=-a_{ij}f_j, & [e_i, f_{iij}]&=-2(1)_{a_{ij}}f_{ij}, & [e_i, f_{iiij}]&=-3(2)_{a_{ij}}f_{iij}.
\end{align*}
\end{remark}

Now we can use these bases to explicitly describe $[-,-]\in \Hom_{\balp_p} ( M_{\alpha_j}\otimes N_{\alpha_j}, \fg')$.

\begin{lemma}\label{lem:bracket-same-alphap}
Assume $\fg$ is finite dimensional and $a_{ii}=2$. Let $j\in \bI$ different from $i$ and assume that $a_{ij}\in \bF_p$. Consider on $M_{\alpha_j}$ and $N_{\alpha_j}$ the cyclic bases from \Cref{notation:indecomposables-bases}.
\begin{enumerate}[leftmargin=*,label=\rm{(\alph*)}]
\item \label{item:bracket-same-alphap-aij=0} If $c_{ij}=0$, then $[-,-] \colon M_{\alpha_j}\otimes N_{\alpha_j} \to \bk \widetilde{h}_j$ is the canonical isomorphism $L_1\otimes L_1 \to L_1$.
\end{enumerate}
For $-1\geq c_{ij} \geq -3$ we have $[M_{\alpha_j},N_{\alpha_j}]=\bk \widetilde{h}_j \oplus S=L_1\oplus L_3$. Also
\begin{enumerate}[leftmargin=*,label=\rm{(\alph*)}, resume]
\item \label{item:bracket-same-alphap-aij=-1} Assume $c_{ij}=-1$ and $p>2$. If we identify $M_{\alpha_j} \otimes N_{\alpha_j}=L_1\oplus L_3$ as in \Cref{ex:alphap-isotypic-L2}, then $[-,-] \colon M_{\alpha_j}\otimes N_{\alpha_j} \to  \bk \widetilde{h}_j \oplus S$ acts as $\iota_{L_1} \oplus a_{ji} \iota_{L_3}$. 

\item \label{item:bracket-same-alphap-aij=-2} Assume $c_{ij}=-2$ and $p>3$. If we identify $M_{\alpha_j} \otimes N_{\alpha_j}=L_1\oplus L_3\oplus L_5$ as in \Cref{ex:alphap-isotypic-L3-p>3}, then $[-,-] \colon M_{\alpha_j}\otimes N_{\alpha_j} \to  \bk \widetilde{h}_j \oplus S$ acts as $6\iota_{L_1} \oplus 4a_{ji} \iota_{L_3}$. 

\item \label{item:bracket-same-alphap-aij=-3-p=5} Assume $c_{ij}=-3$ and $p=5$. If we identify $M_{\alpha_j} \otimes N_{\alpha_j}=L_1\oplus 3L_5$ as in \Cref{ex:alphap-isotypic-L4-p>5}, then $[-,-] \colon M_{\alpha_j}\otimes N_{\alpha_j} \to  \bk \widetilde{h}_j \oplus S$ restricted to $L_1$ acts as $72\iota_{L_1}$. 

\item \label{item:bracket-same-alphap-aij=-3-p>5} Assume $c_{ij}=-3$ and $p>5$. If we identify $M_{\alpha_j} \otimes N_{\alpha_j}=L_1\oplus L_3\oplus L_5\oplus L_7$ as in \Cref{ex:alphap-isotypic-L4-p>5}, then $[-,-] \colon M_{\alpha_j}\otimes N_{\alpha_j} \to \bk \widetilde{h}_j \oplus S$ acts as $72\iota_{L_1} \oplus 120 a_{ji} \iota_{L_3}$. 
\end{enumerate}
\end{lemma}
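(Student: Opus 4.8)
The plan is to use that $[-,-]$ is a morphism in $\Rep(\balp_p)$ (hence $\balp_p$-equivariant for $t=\ad e_i$) together with the $\bZ^\theta$-grading of $\fg$, and to reduce the whole computation to the explicit bracket table of \Cref{rem:bracket-same-alphap}. First I would pin down the target. Any element of $[M_{\alpha_j},N_{\alpha_j}]$ is a bracket $[e_{\alpha_j+k\alpha_i},f_{\alpha_j+\ell\alpha_i}]$, homogeneous of degree $(k-\ell)\alpha_i$. Since $a_{ii}=2$, the only multiples of $\alpha_i$ that are roots or $0$ are $0,\pm\alpha_i$, and in degree $0$ the bracket lands in $\bk h_i\oplus\bk h_j$. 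Thus $[M_{\alpha_j},N_{\alpha_j}]\subseteq\bk f_i\oplus\bk h_i\oplus\bk h_j\oplus\bk e_i$, which splits as $S\oplus\bk\widetilde h_j\cong L_3\oplus L_1$ because $\widetilde h_j\in\ker\xi_i$ is $\ad e_i$- and $\ad f_i$-invariant, exactly as in \Cref{prop:semisimplification-multidimension}\ref{item:semisimplification-multidimension-ii}. Item \ref{item:bracket-same-alphap-aij=0} is then immediate: if $c_{ij}=0$ then $a_{ij}=0$, so $M_{\alpha_j}=\bk e_j$, $N_{\alpha_j}=\bk f_j$, and $[e_j,f_j]=h_j=\widetilde h_j$ by the table.

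For the remaining items I would combine the grading with equivariance. The bracket preserves the $\alpha_i$-degree, and the cyclic generators of the $L_5$- and $L_7$-summands of $M_{\alpha_j}\otimes N_{\alpha_j}$ (the generators $v_1\otimes w_1$, resp. $v_1\otimes w_2-v_2\otimes w_1$, of the relevant Example) sit in $\alpha_i$-degree $\leq -2$; their brackets are therefore forced to vanish for degree reasons, and by equivariance the entire $L_5$- and $L_7$-summands map to $0$. Hence only the $L_1$- and $L_3$-summands contribute: the $L_1$-generator (the balanced alternating sum) sits in degree $0$ and maps into $\bk\widetilde h_j$, while the $L_3$-generator sits in degree $-1$ and maps into $\bk f_i\subset S$. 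So $[-,-]$ is determined by two scalars, read off by evaluating on these two distinguished generators from \Cref{ex:alphap-isotypic-L2,ex:alphap-isotypic-L3-p>3,ex:alphap-isotypic-L4-p>5}, after identifying $v_k=(\ad e_i)^{k-1}e_j$ and computing the $w_k$ for $N_{\alpha_j}$ from the relations $[e_i,f_{ij}]=-a_{ij}f_j$, $[e_i,f_{iij}]=-2(1)_{a_{ij}}f_{ij}$, $[e_i,f_{iiij}]=-3(2)_{a_{ij}}f_{iij}$ of \Cref{rem:bracket-cyclic-N}.

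Carrying out the two evaluations is the computational heart. For instance, when $c_{ij}=-2$ (so $a_{ij}=-2$, $(1)_{a_{ij}}=-1$) one finds $w_1=f_{iij}$, $w_2=-2(1)_{a_{ij}}f_{ij}$, $w_3=2a_{ij}(1)_{a_{ij}}f_j$, and feeding the diagonal entries of \Cref{rem:bracket-same-alphap} into the $L_1$-generator gives
\begin{align*}
[v_1\otimes w_3-v_2\otimes w_2+v_3\otimes w_1,\,-]&=6(1)_{a_{ij}}\bigl(a_{ij}h_j+a_{ji}h_i\bigr)=-6(1)_{a_{ij}}\widetilde h_j=6\,\widetilde h_j,
\end{align*}
while the off-diagonal entries applied to the $L_3$-generator give $-4a_{ji}(1)_{a_{ij}}f_i=4a_{ji}f_i$, yielding \ref{item:bracket-same-alphap-aij=-2}. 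The cases $c_{ij}=-1$ and $c_{ij}=-3$ are entirely analogous, using one fewer (resp. one more) row of the table and producing the scalars of \ref{item:bracket-same-alphap-aij=-1} and \ref{item:bracket-same-alphap-aij=-3-p>5}. For $p=5$ and $c_{ij}=-3$, \Cref{ex:alphap-isotypic-L4-p>5} gives $M_{\alpha_j}\otimes N_{\alpha_j}=L_1\oplus 3L_5$, so only the $L_1$-scalar survives; the same alternating-sum evaluation returns $72\,\widetilde h_j$, nonzero since $72\equiv 2\pmod 5$, which is \ref{item:bracket-same-alphap-aij=-3-p=5}. Finally, the equality $[M_{\alpha_j},N_{\alpha_j}]=\bk\widetilde h_j\oplus S$ in all cases $-1\ge c_{ij}\ge -3$ follows because $[e_j,f_{ij}]=a_{ji}f_i\neq 0$ (here $a_{ji}\neq 0$ by \eqref{eq:symmetrizable}, as $c_{ij}\neq 0$ forces $a_{ij}\neq 0$) together with the nonzero $\widetilde h_j$-scalars just computed.

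The main obstacle is purely bookkeeping rather than conceptual: one must assemble the cyclic basis coefficients of $N_{\alpha_j}$ (the factorials $(n)^!_{a_{ij}}$) against the numerical coefficients in the tensor-product decompositions of \Cref{ex:alphap-isotypic-L3-p>3,ex:alphap-isotypic-L4-p>5} and the entries of \Cref{rem:bracket-same-alphap}, in such a way that the $h_i$-contributions cancel into a clean multiple of $\widetilde h_j$ and the $f_i$-contributions accumulate to the stated multiples $4a_{ji}$ and $120a_{ji}$. The only subtlety to watch is the $p=5$ degeneration, where the absence of an $L_3$-summand in the source means the $S$-component of the image arises solely from the vanishing-in-$\Ver_5$ summands $L_5$, so that only the surviving $L_1$-scalar is recorded.
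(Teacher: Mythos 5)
Your proposal is correct and follows essentially the same route as the paper's proof: fix the cyclic bases of \Cref{notation:indecomposables-bases}, decompose $M_{\alpha_j}\otimes N_{\alpha_j}$ via \Cref{ex:alphap-isotypic-L2,ex:alphap-isotypic-L3-p>3,ex:alphap-isotypic-L4-p>5}, and determine the two nonzero scalars by evaluating the bracket table of \Cref{rem:bracket-same-alphap} on the cyclic generators of the $L_1$- and $L_3$-summands (your sample computation for $c_{ij}=-2$ reproduces the paper's $6\iota_{L_1}\oplus 4a_{ji}\iota_{L_3}$). The only difference is cosmetic: you kill the $L_5$- and $L_7$-summands and pin down the target $\bk\widetilde{h}_j\oplus S$ by the $\bZ^{\theta}$-grading plus $\balp_p$-equivariance, whereas the paper gets the containment from the table and checks directly that those generators bracket to zero.
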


\begin{proof}
\ref{item:bracket-same-alphap-aij=0} In this case $M_{\alpha_j}=\bk\{e_j\}$, $N_{\alpha_j}=\bk\{f_j\}$. Since $[e_j,f_j]=h_j$, the claim follows.

The inclusion $[M_{\alpha_j},N_{\alpha_j}]\subseteq\bk \{2h_j-a_{ji}h_i\} \oplus \bk\{f_i, h_i, -2e_i\}=L_1\oplus L_3$ follows directly from  \Cref{rem:bracket-same-alphap}.

\ref{item:bracket-same-alphap-aij=-1}  The cyclic bases for $M_{\alpha_j}$ and $N_{\alpha_j}$ given by \Cref{notation:indecomposables-bases} are $M_{\alpha_j}= \bk \{e_j, e_{ij}\}$ and $N_{\alpha_j}=\bk\{f_{ij}, f_j\}$. According to \Cref{ex:alphap-isotypic-L2}, we have $M_{\alpha_j} \otimes N_{\alpha_j}=L_1\oplus L_3$ with cyclic bases $L_1=\bk\{e_j\otimes f_j - e_{ij}\otimes f_{ij}\}$ and $L_3=\bk\{e_j\otimes f_{ij}, e_{ij}\otimes f_{ij}+ e_j\otimes f_j, 2e_{ij}\otimes f_j\}$. Now we use \Cref{rem:bracket-same-alphap} to compute the action of the Lie bracket on these bases
\begin{align*}
&\begin{aligned}
[e_j,  f_j ]- [e_{ij},f_{ij}]=h_j-a_{ij}h_j-a_{ji}h_i=2h_j-a_{ji}h_i;
\end{aligned}\\
&\begin{aligned}
[e_{j},f_{ij}]&=a_{ji}f_i, &
[e_{ij},f_{ij}]+[e_j,  f_j ]&= a_{ij}h_j+a_{ji}+h_j=a_{ji}h_i, &
2[e_{ij},f_j]&=-2a_{ji}e_i,
\end{aligned}\end{align*}
and the claim follows.

\ref{item:bracket-same-alphap-aij=-2} The cyclic bases for $M_{\alpha_j}$ and $N_{\alpha_j}$ given by \Cref{notation:indecomposables-bases} are $M_{\alpha_j}= \bk \{e_j, e_{ij}, e_{iij}\}$ and $N_{\alpha_j}=\bk \{f_{iij}, 2f_{ij}, 4f_j\}$. According to \Cref{ex:alphap-isotypic-L3-p>3} we have $M_{\alpha_j} \otimes N_{\alpha_j}=L_1\oplus L_3\oplus L_5$ with cyclic generators
\begin{align*}
L_1&=\langle 4e_j\otimes f_j - 2 e_{ij} \otimes f_{ij}+ e_{iij}\otimes f_{iij}\rangle, &
L_3&=\langle 2e_j\otimes f_{ij} - e_{ij}\otimes f_{iij}\rangle, &
L_5&= \langle  e_j\otimes f_{iij} \rangle.
\end{align*}
Next, we use \Cref{rem:bracket-same-alphap} to compute the Lie bracket on these generators
\begin{align*}
4[e_j, f_j] - 2 [e_{ij}, f_{ij}]+ [e_{iij},f_{iij}]=4h_j +4h_j-2a_{ji}h_i+4h_j-4a_{ji}h_i=6(2h_j-a_{ji}h_i),
\end{align*}
thus the bracket acts as $6 \iota_{L_1}$ on $L_1$. Similarly
\begin{align*}
2[e_j, f_{ij}] - [e_{ij},f_{iij}]&= (2a_{ji}+2a_{ji}) f_i=4a_{ji} f_i
\end{align*}
so the bracket acts as $4a_{ji}$ on the fixed basis of $L_3$. Finally
\begin{align*}
[e_j,f_{iij}]&=0.
\end{align*}
Thus $L_5$ is annihilated by the Lie bracket, and the claim follows.

\ref{item:bracket-same-alphap-aij=-3-p>5} The indecomposables $M_{\alpha_j}$ and $N_{\alpha_j}$ have cyclic  bases $M_{\alpha_j}= \bk \{e_j, e_{ij}, e_{iij}, e_{iiij}\}$ and $N_{\alpha_j}=\bk \{f_{iiij}, 3f_{iij}, 12f_{ij}, 36f_j\}$. According to \Cref{ex:alphap-isotypic-L4-p>5}, we can decompose $M_{\alpha_j} \otimes N_{\alpha_j}=L_1\oplus L_3\oplus L_5\oplus L_7$ with cyclic generators
\begin{align*}
L_1&=\langle 36e_j\otimes f_j - 12 e_{ij} \otimes f_{ij}+ 3e_{iij}\otimes f_{iij}-e_{iiij}\otimes f_{iiij}\rangle,\\
L_3&=\langle 36e_j\otimes f_{ij} - 12e_{ij}\otimes f_{iij}+3e_{iij}\otimes f_{iiij}\rangle, \\
L_5&=\langle 3 e_j\otimes f_{iij}-e_{ij}\otimes f_{iiij}\rangle, \\
L_7&= \langle e_j\otimes f_{iiij}\rangle.
\end{align*}
Using \Cref{rem:bracket-same-alphap}, we compute the Lie bracket on the generators. In $L_1$ we have
\begin{align*}
36&[e_j, f_j] - 12 [e_{ij}, f_{ij}]+3 [e_{iij},f_{iij}]-[e_{iiij},f_{iiij}]=\\
&=36h_j -12(-3h_j+a_{ji}h_i)-12 (-3h_j+2a_{ji}h_i)-12(-3h_j+3a_{ji}h_i)=72(2h_j-a_{ji}h_i),
\end{align*}
hence $[-,-]$ acts as $72\iota$ on $L_1$ (note that this argument also works for \ref{item:bracket-same-alphap-aij=-3-p=5}). Similarly, the action on the generator of $L_3$ is
\begin{align*}
36[e_j, f_{ij}] - 12[e_{ij},f_{iij}]+3[e_{iij},f_{iiij}]=(36 +48+36) a_{ji} f_i=120 a_{ji}f_i.
\end{align*}
And the generators of $L_5$ and $L_7$ are mapped to zero.
\end{proof}

The assumption $a_{ij}\in \bF_p$ in \Cref{lem:bracket-same-alphap} assures that $a_{ij}$ is the class of $c_{ij}$ modulo $p$, see \eqref{eq:definition C}, and is satisfied for all matrices $A$ such that $\dim\fg(A)<\infty$ except the Brown algebra in \Cref{ex:brown}. Thus we will avoid the case when $a_{ij}$ is not in the prime field.

\smallbreak
Next, we describe the adjoint action of $S=\{f_i, h_i, -2e_i\}\simeq L_3$ on itself and on the submodules $\widetilde{h}_j$, $M_{\alpha_j}$, $N_{\alpha_j}$ from \Cref{notation:indecomposables-bases}. 
We will need the following computations:

\begin{remark}\label{rem:bracket-S-M-alphap}
Assume that $a_{ii}=2$. Let $j\in \bI$ different from $i$. Then
\begin{align*}
[f_i, e_{j}]&=0, &[f_i, e_{ij}]&=-a_{ij}e_j, & [f_i, e_{iij}]&=-2(1)_{a_{ij}}e_{ij}, & [f_i, e_{iiij}]&=-3(2)_{a_{ij}}e_{iij}.
\end{align*}
\end{remark}

\begin{lemma}\label{lem:bracket-sl2-g-alphap}
Assume $\fg$ is finite dimensional and $a_{ii}=2$. Let $j\in \bI$ different from $i$ and assume that $a_{ij}\in \bF_p$. Consider on $S$, $M_{\alpha_j}$, and $N_{\alpha_j}$ the cyclic bases from \Cref{notation:indecomposables-bases}.
\begin{enumerate}[leftmargin=*,label=\rm{(\alph*)}]
\item \label{item:bracket-sl2-sl2-alphap} We have $[S,S]=S$. If $p>3$ and we identify $S\otimes S=L_1\oplus L_3\oplus L_5$ as in \Cref{ex:alphap-isotypic-L3-p>3}, then  $[-,-] \colon S\otimes S \to S$ acts as $4\iota_{L_3}$.
\item \label{item:bracket-sl2-hj-alphap} We have $[S,\widetilde{h}_j]=0$.
\item \label{item:bracket-sl2-Mj-alphap-aij=0} If $c_{ij}=0$, then $[S,M_{\alpha_j}]=[S,N_{\alpha_j}]=0$.
\end{enumerate}
For $-1\geq c_{ij} \geq -3$, we have $[S,M_{\alpha_j}]=M_{\alpha_j}$ and $[S,N_{\alpha_j}]=N_{\alpha_j}$. Also
\begin{enumerate}[leftmargin=*,label=\rm{(\alph*)}, resume]
\item \label{item:bracket-sl2-Mj-alphap-aij=-1} Assume $c_{ij}=-1$ and $p>3$. If we identify $S\otimes M_{\alpha_j}=L_2\oplus L_4$ as in \Cref{ex:alphap-isotypic-L3-L2-p>3}, then $[-,-] \colon S\otimes M_{\alpha_j} \to  M_{\alpha_j}$ acts as $3\iota_{L_2}$. 

\item \label{item:bracket-sl2-Mj-alphap-aij=-2} Assume $c_{ij}=-2$ and $p>3$. If we identify $S\otimes M_{\alpha_j}=L_1\oplus L_3\oplus L_5$ as in \Cref{ex:alphap-isotypic-L3-p>3}, then $[-,-] \colon S\otimes M_{\alpha_j} \to M_{\alpha_j}$ acts as $4\iota_{L_3}$. 

\item \label{item:bracket-sl2-Mj-alphap-aij=-3-p=5} Assume $c_{ij}=-3$ and $p=5$. If we identify $S\otimes M_{\alpha_j}=L_2\oplus 2L_5$ as in \Cref{ex:alphap-isotypic-L3-L4-p>5}, then $[-,-] \colon S\otimes M_{\alpha_j} \to M_{\alpha_j}$ vanishes $L_2$.

\item \label{item:bracket-sl2-Mj-alphap-aij=-3-p>5} Assume $c_{ij}=-3$ and $p>5$. If we identify $S\otimes M_{\alpha_j}=L_2\oplus L_4\oplus L_6$ as in \Cref{ex:alphap-isotypic-L3-L4-p>5}, then $[-,-] \colon S\otimes M_{\alpha_j} \to M_{\alpha_j}$ acts as $15\iota_{L_4}$.
\end{enumerate}
In \ref{item:bracket-sl2-Mj-alphap-aij=-1}-\ref{item:bracket-sl2-Mj-alphap-aij=-3-p>5}, the exact same argument holds for $N_{\alpha_j}$ in place of  $M_{\alpha_j}$ (with the same scalars).
\end{lemma}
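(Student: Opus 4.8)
The plan is to follow the template of \Cref{lem:bracket-same-alphap}: reduce every assertion to evaluating the Lie bracket on the explicit cyclic generators produced by the $\balp_p$-isotypic decompositions of the relevant tensor products, and then read off the scalar on each component. First I would dispose of the three preliminary items. For \ref{item:bracket-sl2-sl2-alphap}, the subspace $S=\bk\{f_i,h_i,-2e_i\}$ is the $\fsl_2$-triple attached to the node $i$, so $[S,S]=S$ since $2\neq0$; for the refined statement I decompose $S\ot S=L_1\oplus L_3\oplus L_5$ via \Cref{ex:alphap-isotypic-L3-p>3} with $v_k=w_k$ running over $\{f_i,h_i,-2e_i\}$. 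The generators of $L_1$ and $L_5$ are symmetric tensors, hence killed by the antisymmetric bracket, while the $L_3$-generator $f_i\ot h_i-h_i\ot f_i$ maps to $[f_i,h_i]-[h_i,f_i]=4f_i$, exhibiting $4\iota_{L_3}$. For \ref{item:bracket-sl2-hj-alphap} I note that $\widetilde h_j\in\ker\xi_i\cap\fh$ by \Cref{notation:indecomposables-bases}, so \eqref{eq:action-h-on-root-vector} (with $\beta=\alpha_i$) and the fact that $\fh$ is abelian give $[e_i,\widetilde h_j]=[h_i,\widetilde h_j]=[f_i,\widetilde h_j]=0$, whence $[S,\widetilde h_j]=0$. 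For \ref{item:bracket-sl2-Mj-alphap-aij=0}, $c_{ij}=0$ forces $a_{ij}=a_{ji}=0$ by \eqref{eq:symmetrizable}, both strings are trivial, and $[h_i,e_j]=a_{ij}e_j=0$, $[f_i,e_j]=0$ by \Cref{rem:bracket-S-M-alphap}, $[e_i,e_j]=0$; hence $[S,M_{\alpha_j}]=0$, and $[S,N_{\alpha_j}]=0$ follows symmetrically.

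The heart of the proof is the range $-1\ge c_{ij}\ge -3$. The key preliminary step is to record, for $e_\beta=(\ad e_i)^k e_j$, the three brackets $[e_i,e_\beta]=e_{i^{k+1}j}$, $[h_i,e_\beta]=(2k+a_{ij})e_\beta$ (from \eqref{eq:action-h-on-root-vector}), and $[f_i,e_\beta]$ (from \Cref{rem:bracket-S-M-alphap}). The equality $[S,M_{\alpha_j}]=M_{\alpha_j}$ is then immediate, since $\ad e_i$ climbs the entire string $e_j\mapsto e_{ij}\mapsto\cdots$ and $\ad h_i$ rescales $e_j$ by $a_{ij}\ne0$ in $\bF_p$ (as $p>3$); the analogous assertion for $N_{\alpha_j}$ is identical.

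With these formulas in hand, for each value of $c_{ij}$ I substitute the explicit cyclic generators: \Cref{ex:alphap-isotypic-L3-L2-p>3} ($S\ot M_{\alpha_j}=L_2\oplus L_4$) for $c_{ij}=-1$, \Cref{ex:alphap-isotypic-L3-p>3} ($L_1\oplus L_3\oplus L_5$) for $c_{ij}=-2$, and \Cref{ex:alphap-isotypic-L3-L4-p>5} for $c_{ij}=-3$. In every case the top relevant generator evaluates to $3e_j$, $4e_j$, respectively $15e_j$, a nonzero multiple of the cyclic generator of $M_{\alpha_j}$, while the remaining generators vanish. For instance, when $c_{ij}=-2$ the $L_1$-generator gives $[f_i,e_{iij}]-[h_i,e_{ij}]+[-2e_i,e_j]=2e_{ij}-0-2e_{ij}=0$; and when $c_{ij}=-3$ the corresponding $L_2$-generator gives $4e_{ij}+2e_{ij}-6e_{ij}=0$ \emph{over $\bZ$}. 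This last identity simultaneously settles \ref{item:bracket-sl2-Mj-alphap-aij=-3-p=5} (where $S\ot M_{\alpha_j}=L_2\oplus 2L_5$ and the two copies of $L_5$ are negligible, so only the vanishing on $L_2$ is claimed) and the vanishing of $L_2$ in \ref{item:bracket-sl2-Mj-alphap-aij=-3-p>5}, where the surviving $L_4$ carries the factor $15$.

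Finally, the statements for $N_{\alpha_j}$ follow by applying the Chevalley involution $\omega$, which is a Lie algebra automorphism carrying the $M_{\alpha_j}$-data to the $N_{\alpha_j}$-data (the signs introduced by $\omega$ enter symmetrically and cancel when the scalar is read off against cyclic generators); alternatively one repeats the computation verbatim using the brackets $[e_i,f_{\dots}]$ recorded in \Cref{rem:bracket-cyclic-N}. The only genuine obstacle here is bookkeeping: one must keep the normalizing scalars of the chosen cyclic bases (such as the factors appearing in $N_{\alpha_j}=\bk\{f_{iiij},3f_{iij},12f_{ij},36f_j\}$) consistent with the coefficients in the decomposition examples, and confirm that the resulting scalar is nonzero exactly in the asserted characteristics, so that the surviving component is a true nonzero multiple of the canonical inclusion.
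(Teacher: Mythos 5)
Your proposal is correct and follows essentially the same route as the paper: decompose $S\otimes S$ and $S\otimes M_{\alpha_j}$ into the explicit cyclic generators of \Cref{ex:alphap-isotypic-L3-p>3,ex:alphap-isotypic-L3-L2-p>3,ex:alphap-isotypic-L3-L4-p>5}, evaluate the bracket on each generator via \eqref{eq:action-h-on-root-vector} and \Cref{rem:bracket-S-M-alphap}, and read off the scalars; your computations ($3e_j$, $4e_j$, $15e_j$, and the integral vanishing $4e_{ij}+2e_{ij}-6e_{ij}=0$ covering both the $p=5$ and $p>5$ cases) all check out. The only cosmetic differences—killing the $L_1$ and $L_5$ generators in part \ref{item:bracket-sl2-sl2-alphap} by symmetry of the tensors rather than direct evaluation, and invoking the Chevalley involution for the $N_{\alpha_j}$ statements that the paper leaves as "the exact same argument"—do not change the substance of the argument.
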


\begin{proof}
\ref{item:bracket-sl2-sl2-alphap} According to \Cref{ex:alphap-isotypic-L3-p>3}, we have $S\otimes S=L_1\oplus L_3\oplus L_5$ with cyclic generators
\begin{align*}
L_1&=\langle -2f_i\otimes e_i-h_i\otimes h_i-2e_i\otimes f_i\rangle, &
L_3&=\langle f_i\otimes h_i- h_i\otimes f_i\rangle, &
L_5&= \langle  f_i\otimes f_i \rangle.
\end{align*}
Now, it is easy to see that $[-,-]$ annihilates the generators of $L_1$ and $L_5$, and maps the generator of $L_3$ to $4f_i$.

\ref{item:bracket-sl2-hj-alphap} Follows immediately because $S$ is generated by $f_j$, and $\widetilde{h}_j$ is in $\ker(\xi_i)$ by construction. 

\ref{item:bracket-sl2-Mj-alphap-aij=0} In this case, $M_{\alpha_j}=\bk \{e_j\}$. Since $f_i$, $h_i$, and $e_i$ annihilates $e_j$, we have $[S,M_{\alpha_j}]=0$. 

\ref{item:bracket-sl2-Mj-alphap-aij=-1} According to \Cref{ex:alphap-isotypic-L3-L2-p>3}, we have $S\otimes M_{\alpha_j}= L_2\oplus L_4$, where 
\begin{align*}
L_2&=\langle 2f_i\otimes e_{ij} -h_i\otimes e_j \rangle, &
L_4&=\langle f_i\otimes e_j\rangle.
\end{align*}
Now, we use \Cref{rem:bracket-S-M-alphap} to see that $[-,-]$ annihilates the generator of $L_4$ and maps that of $L_2$ to $3 e_j$.

\ref{item:bracket-sl2-Mj-alphap-aij=-2} From \Cref{ex:alphap-isotypic-L3-p>3}, we get $S\otimes M_{\alpha_j}=L_1\oplus L_3\oplus L_5$, with cyclic generators
\begin{align*}
L_1&=\langle f_i\otimes e_{iij}-h_i\otimes e_{ij}-2e_i\otimes e_i\rangle, &
L_3&=\langle f_i\otimes e_{ij}- h_i\otimes e_i\rangle, &
L_5&= \langle  f_i\otimes e_j \rangle.
\end{align*}
Now the proof follows as in part \ref{item:bracket-sl2-sl2-alphap}.

\ref{item:bracket-sl2-Mj-alphap-aij=-3-p>5} We use \Cref{ex:alphap-isotypic-L3-p>3} to get $S\otimes L_3=L_1\oplus L_3\oplus L_5$ with cyclic generators 
\begin{align*}
L_2&=\langle -2f_i\otimes e_i-h_i\otimes h_i-2e_i\otimes f_i\rangle, &
L_4&=\langle f_i\otimes h_i- h_i\otimes f_i\rangle, &
L_6&= \langle  f_i\otimes e_i \rangle.
\end{align*}
An straightforward computation using \Cref{rem:bracket-S-M-alphap} shows that $[-,-]$ annihilates the generators of $L_2$ and $L_6$, and it maps the generator of $L_4$ to $15 e_j$. A similar argument works for \ref{item:bracket-sl2-Mj-alphap-aij=-3-p=5}.
\end{proof}

\subsection{The structure of the Lie algebra \texorpdfstring{$\sfS (\fg'(A), e_i)$}{}}\label{subsec:g-in-verp}
We work towards a root system associated to the semisimplification $\sfS (\fg'(A), e_i)$, where $\fg(A)$ is a finite dimensional contragredient Lie algebra, and $i\in \bI_\theta$ is such that $a_{ii}=2$. Recall the sets $\varDelta_{+,\min}^{A}$, $\varDelta_{\min}^{A}$ from \Cref{notation:strings}.

\smallbreak
Let $\pi_{i}\colon\bZ^{\theta}\to\bZ^{\theta-1}$ be the projection introduced in Lemma \ref{lem:Cuntz-Lentner}. Consider
\begin{align*}
\nabla_{\pm}^{\sfS(\fg, e_i)} &\coloneqq \pi_{i}\left( \varDelta^{A}_{\pm,\min} \right), & \nabla^{\sfS(\fg, e_i)} &\coloneqq\pi_{i}\left( \varDelta^{A}_{\min} \right)=\nabla_{+}^{\sfS(\fg, e_i)}\cup \nabla_{-}^{\sfS(\fg, e_i)}.
\end{align*}

We will see that $\nabla^{\sfS(\fg, e_i)}$ may contain positive multiples of roots. This behavior resembles that of contragredient Lie superalgebras, as described in \Cref{subsec:groupoid}. Hence we introduce
\begin{align*}
\varDelta_{\pm}^{\sfS(\fg, e_i)} & \coloneqq \nabla_{\pm}^{\sfS(\fg, e_i)}-\{n\beta: n\ge 2, \beta \in \nabla_{\pm}^{\sfS(\fg, e_i)}\}, & 
\varDelta^{\sfS(\fg, e_i)} &\coloneqq \varDelta_{+}^{\sfS(\fg, e_i)}\cup \varDelta_{-}^{\sfS(\fg, e_i)}.
\end{align*}

\begin{remark} Recall the notations $\pi_i$ and $\varDelta_{\pm,\min}^{A}$ from the beginning of \Cref{subsec:g-in-verp}.
\begin{enumerate}[leftmargin=*,label=\rm{(\roman*)}]
\item Via $\pi_i\colon \bZ^\theta \to\bZ^{\theta-1}$, we obtain a grading of $(\fg, e_i)$ by $\bZ^{\theta-1}$. Explicitly, 
\begin{align*}
\fg&=\bigoplus_{\gamma\in \bZ^{\theta-1}} \fg_{\gamma}, & \text{where } \fg_{\gamma}&\coloneqq\bigoplus_{\beta \in \pi_i^{-1}(\gamma)} \fg_{\beta}.
\end{align*}
\item For each $\beta\in\varDelta_{+,\min}^{A}$, we have $M_{\beta}\subseteq \fg_{\pi_i(\beta)}$ and $N_{\beta}\subseteq \fg_{-\pi_i(\beta)}$.
Thus $\sfS(\fg',e_i)$ inherits the $\bZ^{\theta-1}$-graduation of $\fg'$. Moreover,
\begin{align*}
\sfS(\fg',e_i) &= \sfS(\fg',e_i)_0 \oplus \left( \bigoplus_{\gamma\in\nabla^{\sfS(\fg, e_i)}} \sfS(\fg, e_i)_{\gamma} \right).
\end{align*}
\end{enumerate}
\end{remark}

To illustrate the situation, we explicitly compute $\nabla^{\sfS(\fg, e_i)}$ and $\varDelta^{\sfS(\fg, e_i)}$ for all finite Cartan matrices of rank $2$. We show that, in all cases, $\varDelta^{\sfS(\fg, e_i)}=\{\pm\alpha_j\}$ where $j$ is the index different from $i$; however $\nabla^{\sfS(\fg, e_i)}$ depends on the number of laces of the Dynkin diagram.

\begin{example}\label{ex:A2}
Let $\fg$ be of type $A_2$; that is, $A=\left(\begin{smallmatrix} 2 & -1 \\ -1 & 2\end{smallmatrix}\right)$. If we fix $i=1$, then $\nabla^{\sfS(\fg, e_1)}_+=\{2\}$. Indeed, $\fg = M_{2}\oplus S \oplus\Bbbk\{2h_2+h_1\} \oplus N_{2}$, where $M_{2}=\Bbbk\{e_2,e_{12}\}\simeq L_2$, $N_{2}=\Bbbk\{f_{12}, f_2\} \simeq L_2$, and one can easily verify using the bases from \Cref{ex:alphap-isotypic-L2} that $\restr{[-,-]}{M_{2}\ot M_{2}}=0$.
\end{example}

\begin{example}\label{ex:brown}
Let $p=3$, $A=\left(\begin{smallmatrix} 2 & a \\ -1 & 2 \end{smallmatrix}\right)$, $a\notin \bF_3$. Thus $\fg$ is the Brown Lie algebra $\brown(2,a)$.

\begin{itemize}[leftmargin=*]
\item For $i=1$, $\nabla^{\sfS(\fg, e_1)}_+=\emptyset$ since 
$\fg = M_{2}\oplus S \oplus\Bbbk\{2h_2+h_1\} \oplus N_{2}$, where $M_{2}=\Bbbk\{e_2,e_{12},e_{112}\}$, $N_{2}=\Bbbk\{f_{112},2f_{12},4f_{2}\}$, and $M_{2},N_{2}\simeq L_3$.
Here we have $\restr{[-,-]}{M_{2}\ot M_{2}}=0$.

\item For $i=2$, we obtain $\nabla^{\sfS(\fg, e_2)}_+=\{1,1^2\}$ since
\begin{align*}
(\fg, e_2) &= M_{1} \oplus M_{1^22}\oplus S \oplus\Bbbk\{2h_1+2h_2\} \oplus N_{1} \oplus N_{1^22},
\end{align*}
where $M_{1}=\Bbbk\{e_1,e_{12}\}$, $M_{1^22}=\Bbbk\{e_{112}\}$, $N_{1}=\Bbbk\{f_{12},f_{1}\}$, $N_{1^22}=\Bbbk\{f_{112}\}$. 

According to \Cref{ex:alphap-isotypic-L2}, we have $M_{1}\ot M_{1}\simeq L_1\oplus L_3$, where the copy of $L_1$ has basis
$e_1\ot e_{12}-e_{12}\ot e_1$ so $[-,-]\colon M_{1}\ot M_{1}\to M_{1^22}\simeq L_1$ is $2\iota_{L_1}$. It is straightforward to see that $\restr{[-,-]}{M_{1^22}\ot M_{2}}=0$ and $\restr{[-,-]}{M_{1^22}\ot M_{1^22}}=0$.
\end{itemize}
\end{example}

\begin{example}\label{ex:B2}
Let $\fg$ be of type $B_2$; that is, $A=\left(\begin{smallmatrix} 2 & -2 \\ -1 & 2\end{smallmatrix}\right)$.  In this case,
\begin{align*}
\nabla^{\sfS(\fg, e_1)}_+ & =\begin{cases} \{2\}, & p>3, \\ \emptyset, & p=3; \end{cases} &
\nabla^{\sfS(\fg, e_2)}_+&=\{1,1^2\}.
\end{align*}
The proof and the description of the brackets are similar to those of  \Cref{ex:brown}.
\end{example}

\begin{example}\label{ex:G2}
Assume $p>3$ and consider $\fg$ of type $G_2$, that is, $A=\left(\begin{smallmatrix} 2 & -3 \\ -1 & 2\end{smallmatrix}\right)$. 

\begin{itemize}[leftmargin=*]
\item For $i=1$, we obtain $\nabla^{\sfS(\fg, e_1)}_+=\{2,2^2\}$. Indeed, 
\begin{align*}
(\fg, e_1) &= M_{2} \oplus M_{1^32^2}\oplus S \oplus\Bbbk\{h_1+2h_2\} \oplus N_{2} \oplus N_{1^32^2},
\\
&\begin{aligned}
\text{where }M_{2}&=\Bbbk\{e_2,e_{12},e_{112},e_{1112}\}, & M_{1^32^2}&=\Bbbk\{[e_{112},e_{12}]\}, 
\\
N_{2}&=\Bbbk\{f_{1112},3f_{112},12f_{12},36f_{2}\}, & N_{1^32^2}&=\Bbbk\{[f_{112},f_{12}]\}.
\end{aligned}
\end{align*}
Here $M_{2}\ot M_{2}\simeq L_1\oplus L_3\oplus L_5\oplus L_7$ when $p>5$, while for $p=5$ we have $M_{2}\ot M_{2}\simeq L_1\oplus 3 L_5$, see  \Cref{ex:alphap-isotypic-L4-p>5}. In any case, inside $M_{2}\otimes M_2$ there is a copy of $L_1$ spanned by
$e_2\otimes e_{1112}-e_{12}\otimes e_{112}+e_{112}\otimes e_{12}-e_{1112}\otimes e_{2}$. Via $M_{1^32^2}\simeq L_1$, we see that $[-,-]\colon M_{1}\ot M_{1}\to M_{1^32^2}$ is $4\iota_{L_1}$. We also have $\restr{[-,-]}{M_{1^32^2}\ot M_{2}}=0$ and $\restr{[-,-]}{M_{1^32^2}\ot M_{1^32^2}}=0$.

\item If $i=2$, we get $\nabla^{\sfS(\fg, e_2)}_+=\{1,1^2,1^3\}$. In fact, we decompose
\begin{align*}
(\fg, e_2 )&= M_{1} \oplus M_{1^22}\oplus M_{1^32}\oplus S \oplus\Bbbk\{2h_1+3h_2\} \oplus N_{1} \oplus N_{1^22}\oplus N_{1^32},
\end{align*}
where 
\begin{align*}
M_{1}&=\Bbbk\{e_1,e_{12}\}, & M_{1^22}&=\Bbbk\{e_{112}\}, & M_{1^32}&=\Bbbk\{e_{1112}, \lambda_1[e_{112},e_{12}]\}, 
\\
N_{1}&=\Bbbk\{f_{12},f_1\}, & N_{1^22}&=\Bbbk\{f_{112}\}, & N_{1^32}&=\Bbbk\{ [f_{112},f_{12}], \lambda_2f_{1112}\}. 
\end{align*}
Here $M_{1}\ot M_{1}\simeq L_1\oplus L_3$, $M_{1^22}\simeq L_1$, and $M_{1^32}\simeq L_2$. Using \Cref{ex:alphap-isotypic-L2} we see that $[-,-]\colon M_{1}\ot M_{1}\to M_{1^22}$ is $2\iota_{L_1}$. Also, $[-,-]\colon M_{1}\ot M_{1^22}\to M_{1^32}$ is $4\iota_{L_2}$ and the other brackets between $M_{\alpha}$'s are 0.
\end{itemize}
\end{example}

We give another example, this time for a matrix of rank three.

\begin{example}\label{ex:br(3)}
Let $p=3$. We describe \emph{root vectors}  for semisimplifications of the Lie algebra $\brown(3)$ of dimension $29$. 
There are two matrices realizing this Lie algebra, cf. \cite{Sk}:
\begin{align}\label{eq:matrices br(3)}
A_1&=\begin{bmatrix} 2 & -1 & 0 \\ -1 & 2 & -1 \\ 0 & 1 & 0 \end{bmatrix}, &
A_2&=\begin{bmatrix} 2 & -1 & 0 \\ -2 & 2 & -1 \\ 0 & 1 & 0 \end{bmatrix}.
\end{align}
The corresponding positive roots are
\begin{align*}
\varDelta_{+}^{A_1}= & \{ 1, 12, 123,  1^22^33^4, 12^23^2, 12^23^3, 12^23^4, 12^33^4, 123^2,  2, 23^2, 23, 3\}, 
\\
\varDelta_{+}^{A_2}= & \{ 1, 12^2, 12, 123^2, 12^33^2, 1^22^33^2, 12^23^2, 123, 12^23, 2, 23^2, 23, 3 \}.
\end{align*}
As $a_{33}=b_{33}=0$, we consider semisimplifications by $e_i$ with $i\ne 3$.  For $\fg=\fg(A_1)$ we have:
\begin{align*}
\nabla^{\sfS(\fg, e_1)}_+&=\{2, 23, 23^2, 2^23^2, 2^23^3, 2^23^4, 2^33^4, 3\}, &
\nabla^{\sfS(\fg, e_2)}_+&=\{1, 13,  1^23^4, 13^2, 13^3, 13^4, 3^2, 3\}.
\end{align*}
And for $\fg=\fg(A_2)$ we have:
\begin{align*}
\nabla^{\sfS(\fg, e_1)}_+&=\{2, 2^2, 2^33^2, 2^23^2, 2^23, 23^2, 23, 3 \}, &
\nabla^{\sfS(\fg, e_2)}_+&=\{13, 1^23^2, 3^2, 3\}.
\end{align*}
We show details on the computations only for the last case. Notice that
\begin{align*}
(\fg,e_2) &= M_{1} \oplus M_{123}\oplus M_{123^2}\oplus M_{1^22^33^2} \oplus M_{23^2}\oplus M_{3} \oplus S \oplus\Bbbk\{2h_1+h_2\} 
\\
& \quad \oplus\Bbbk\{2h_3-h_2\} \oplus N_{1} \oplus N_{123}\oplus N_{123^2}\oplus N_{1^22^33^2} \oplus N_{23^2}\oplus N_{3},
\end{align*}
where 
\begin{align*}
M_{1}&=\Bbbk\{e_1,e_{12},e_{12^2}\}, & M_{123}&=\Bbbk\{e_{123},e_{2123}\}, & M_{123^2}&=\Bbbk\{e_{3123}, e_{23123},e_{223123}\}, 
\\
M_{1^22^33^2}&=\Bbbk\{[e_{2123},e_{123}]\}, & M_{23^2}&=\Bbbk\{e_{332}\}, & M_{3}&=\Bbbk\{ e_3, e_{23} \}.
\end{align*}
and the $N_{\alpha}$'s have analogous descriptions.
Here $M_1, M_{123^2}\simeq L_3$, so these vanish under the semisimplification functor. Also, $M_{123}\ot M_{123}\simeq L_1\oplus L_3 \simeq M_{3}\ot M_{3}$. One can see that
$[-,-]\colon M_{123}\ot M_{123}\to M_{1^22^33^2}$ is $\iota_{L_2}$, $[-,-]\colon M_{3}\ot M_{3}\to M_{23^2}$ is $\iota_{L_2}$, and
the other brackets between $M_{\alpha}$'s are 0.
\end{example}

\begin{proposition}\label{prop:nabla-delta-parabolic-root-system}
Let $A$ be such that $\dim \fg(A)<\infty$ and $i\in\bI_{\theta}$ such that $a_{ii}=2$.
\begin{enumerate}[leftmargin=*,label=\rm{(\roman*)}]
\item\label{item:nabla-delta-simply-laced} If the Dynkin diagram of $A$ is simply laced, then  $\nabla^{\sfS(\fg, e_i)}=\varDelta^{\sfS(\fg, e_i)}$.
\item\label{item:nabla-delta-B} If $A$ is of type $B_{\theta}$, then 
$\nabla^{\sfS(\fg, e_i)}=\varDelta^{\sfS(\fg, e_i)}\cup \{2\alpha_{i+1\,\theta}\}$.
\item\label{item:nabla-delta-C} If $A$ is of type $C_{\theta}$, then 
$\nabla^{\sfS(\fg, e_i)}=\begin{cases}
\varDelta^{\sfS(\fg, e_i)}, & i\ne \theta, \\
\varDelta^{\sfS(\fg, e_\theta)}\cup \{2\alpha_{j\,\theta-1} : j\in\bI_{\theta-1}\}, & i=\theta.
\end{cases}$
\item\label{item:nabla-delta-F4} If $A$ is of type $F_4$, then 
$\nabla^{\sfS(\fg, e_i)}=\begin{cases}
\varDelta^{\sfS(\fg, e_i)}, & i=1,2, \\
\varDelta^{\sfS(\fg, e_3)}\cup \{2\beta : \beta=2, 12\}, & i=3, \\
\varDelta^{\sfS(\fg, e_4)}\cup \{2\beta: \beta=23,123,12^23\}, & i=4.
\end{cases}$
\end{enumerate}
\end{proposition}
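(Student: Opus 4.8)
The plan is to isolate the precise mechanism by which $\nabla^{\sfS(\fg,e_i)}$ can exceed $\varDelta^{\sfS(\fg,e_i)}$, and to show it is controlled by the rank-two subsystems of $\varDelta^A$ through $\alpha_i$. The starting point is that $\pi_i$ is constant along $\alpha_i$-strings, so by definition $\nabla_+^{\sfS(\fg,e_i)}=\pi_i(\varDelta_{+,\min}^{A})$ is the $\pi_i$-image of the set of $\alpha_i$-string generators of length $<p$. Consequently $\nabla_+^{\sfS(\fg,e_i)}$ differs from $\varDelta_+^{\sfS(\fg,e_i)}$ exactly when some $\gamma$ and a proper multiple $n\gamma$, with $n\geq 2$, both lie in $\pi_i(\varDelta_{+,\min}^{A})$; equivalently, when there are generators $\beta_1,\beta_2\in\varDelta_{+,\min}^{A}$ with $\pi_i(\beta_2)=n\,\pi_i(\beta_1)$. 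Since $\ker\pi_i=\bZ\alpha_i$, this forces $\beta_2=n\beta_1+m\alpha_i$ for some $m\in\bZ$, so that $\beta_1,\beta_2,\alpha_i$ all lie in the plane $\bQ\alpha_i+\bQ\beta_1$.

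First I would reduce to the rank-two subsystem $R\coloneqq\varDelta^A\cap(\bQ\alpha_i+\bQ\beta_1)$. By \Cref{thm:cuntz-heck} (together with \Cref{rem:subset-subalgebra-contragredient}) $R$ is a finite rank-two root system, and since the ambient type is simply laced, $B_\theta$, $C_\theta$ or $F_4$ — none containing a triple bond — $R$ is of type $A_1\times A_1$, $A_2$ or $B_2$. Every $\alpha_i$-string meeting $R$ stays in $\bQ\alpha_i+\bQ\beta_1$, hence in $R$, so the entire string-and-projection computation may be carried out inside $R$. A direct inspection of these three rank-two systems shows that $\pi_i$ produces a nontrivial multiple \emph{precisely} when $R$ is of type $B_2$ with $\alpha_i$ its long simple root: writing the positive roots as $\beta,\alpha_i,\alpha_i+\beta,\alpha_i+2\beta$ with $\beta$ short, the generators $\beta$ and $\alpha_i+2\beta$ have $\alpha_i$-strings of length $2$ and $1$ and satisfy $\pi_i(\alpha_i+2\beta)=2\pi_i(\beta)$, giving $n=2$ and $\gamma=\pi_i(\beta)$. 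In the cases $A_1\times A_1$, $A_2$, and $B_2$ with $\alpha_i$ short, all roots in a common plane have the same $\pi_i$-image, so no multiple arises. As the two relevant strings have length at most $2<p$ for every $p>2$, both generators survive into $\varDelta_{+,\min}^{A}$, so $2\gamma$ genuinely lies in $\nabla_+^{\sfS(\fg,e_i)}$.

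With this criterion the proof becomes an enumeration of the $B_2$-subsystems in which $\alpha_i$ is long. For \ref{item:nabla-delta-simply-laced} there are none, whence $\nabla^{\sfS(\fg,e_i)}=\varDelta^{\sfS(\fg,e_i)}$. For the other items I would pass to the standard $\varepsilon$-coordinates. In type $B_\theta$ (with $\alpha_\theta$ short) the root $\alpha_i=\varepsilon_i-\varepsilon_{i+1}$ is long exactly for $i<\theta$, and the unique $B_2$-plane realizing it as long root is $\langle\varepsilon_i,\varepsilon_{i+1}\rangle$, whose short root $\varepsilon_{i+1}$ projects to $\alpha_{i+1\,\theta}$; this gives the single multiple $2\alpha_{i+1\,\theta}$, proving \ref{item:nabla-delta-B}. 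In type $C_\theta$ the only long simple root is $\alpha_\theta=2\varepsilon_\theta$, and the planes $\langle\varepsilon_a,\varepsilon_\theta\rangle$ with $a<\theta$ have short roots $\varepsilon_a-\varepsilon_\theta$ projecting to $\alpha_{a\,\theta-1}$, yielding the multiples $2\alpha_{a\,\theta-1}$ and hence \ref{item:nabla-delta-C}. For \ref{item:nabla-delta-F4} the long simple roots are $\alpha_3,\alpha_4$, and one enumerates the $B_2$-subsystems having $\alpha_3$ (resp.\ $\alpha_4$) as long root, reading off the short roots whose $\pi_i$-images are the elements $2,12$ (resp.\ $23,123,12^23$) listed in the statement.

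I expect the main obstacle to be the $F_4$ computation: unlike $B_\theta$ and $C_\theta$, where the long simple root sits in a single transparently parametrized family of $B_2$-planes, for $\alpha_3$ and $\alpha_4$ one must sift through all $24$ roots to locate every $B_2$-subsystem with the prescribed long root and verify that the short-root projections are \emph{exactly} the listed multiples, with no repetition or omission. A secondary point needing care is uniformity in $p$: one must confirm that the length-$<p$ truncation defining $\varDelta_{+,\min}^{A}$ only discards generators whose $\alpha_i$-string has length $\geq p$, which by the rank-two analysis occurs solely in planes where $\alpha_i$ is \emph{short} (through the long root of a $B_2$-plane). Such truncation may remove base roots but can never create or suppress a genuine multiple, since the multiples always come from length-$\leq 2$ strings in planes where $\alpha_i$ is long; this is what keeps the statement independent of $p$.
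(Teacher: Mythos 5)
Your proposal is correct, and it takes a genuinely different route from the paper's own proof. The paper argues coordinate-by-coordinate: for each type it runs through the list of positive roots and checks whether the entries of $\pi_i(\beta)$ are coprime, identifying by hand the exceptional $\beta$ with $\pi_i(\beta)=2\beta'$ (this needs separate care in $E_7$ and $E_8$, where some roots have no coordinate equal to $1$, plus a closing remark that only $n=2$ can occur). You instead isolate the mechanism: a relation $\pi_i(\beta_2)=n\pi_i(\beta_1)$ forces $\beta_1,\beta_2,\alpha_i$ into a common plane because $\ker\pi_i=\bZ\alpha_i$; the plane meets $\varDelta^A$ in a rank-two subsystem $R$ of type $A_1\times A_1$, $A_2$ or $B_2$ (no triple bonds in the ambient types), in which $\alpha_i$ is automatically simple since it is simple in $\varDelta^A$; and a three-case inspection shows multiples occur exactly when $R$ is $B_2$ with $\alpha_i$ long, always with $n=2$ and always surviving the length-$<p$ truncation, since the two relevant strings have lengths $1$ and $2$. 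This buys uniformity: the simply-laced case (including $E_7$, $E_8$) is immediate because those types have no $B_2$-subsystems, the value $n=2$ comes for free rather than by observation, and the independence of $p$ is explained rather than checked. Your $\varepsilon$-coordinate enumeration of the $B_2$-planes in types $B_\theta$ and $C_\theta$ reproduces the paper's lists exactly (here it is worth making explicit that root lengths are intrinsic to subsystems, so every $B_2$-subsystem of $B_\theta$ or $C_\theta$ is spanned by two of the $\varepsilon_a$'s, which is what makes your enumeration exhaustive). The one place you stop short is $F_4$: you describe the enumeration of $B_2$-subsystems with long root $\alpha_3$ (resp.\ $\alpha_4$) but do not exhibit them. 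That is a finite check of essentially the same size as the one the paper performs without showing work (it simply lists the pairs $(\beta,\beta')$ with $\pi_i(\beta)=2\beta'$), so it is an unexecuted computation rather than a gap in the method; note also that your convention ($\alpha_3,\alpha_4$ long) agrees with the paper's labeling of $F_4$, as one can confirm from the root list in \eqref{eq:F4-pos-roots}.
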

\begin{proof}
For \ref{item:nabla-delta-simply-laced}, it is enough to show that the entries of $\pi_i(\beta)$ are coprime for every $\beta\in\varDelta^A_+$ different from $\alpha_i$, which we check case-by-case.
In types $A_{\theta}$, $D_{\theta}$ and $E_6$ the roots $\beta\ne\alpha_i$ are either $\alpha_j$ with $j\ne i$ or else contain at least two coordinates equal to $1$, so $\pi_i(\beta)$ keeps at least one coordinate equal to 1. 

The same happens for all roots in types $E_7$ and $E_8$, except for the largest root in type $E_7$ and twelve roots in type $E_8$. However, these exceptional roots contain one coordinate equal to 1, another equal to 2 and another equal to 3.

\smallbreak
For \ref{item:nabla-delta-B}, at least one entry of the vector $\pi_i(\beta)$ equals $1$, unless $\beta=\alpha_{i\theta}+\alpha_{i+1\,\theta}$: in this case, $\pi_i(\beta)=2\alpha_{i+1\,\theta}$, and $\alpha_{i+1\,\theta}\in\varDelta_{+}^{\sfS(\fg, e_i)}$.

\smallbreak
For \ref{item:nabla-delta-C}, again $\pi_i(\beta)$ has at least one entry equal to one unless $i=\theta$ and $\beta$ is of the form $\beta=\alpha_{j\theta}+\alpha_{j\,\theta-1}$, for some $j\in\bI_{\theta-1}$. In that case $\pi_{\theta}(\beta)=2\alpha_{j\,\theta-1}$, and $\alpha_{j\,\theta-1}\in\varDelta_{+}^{\sfS(\fg, e_\theta)}$.

\smallbreak
Finally, we consider \ref{item:nabla-delta-F4}, whence the positive roots are 
\begin{align}\label{eq:F4-pos-roots}
\begin{aligned}
\varDelta_+^A&= \{1, 12, 2, 1^22^23, 12^23, 123, 2^23, 23, 3, 1^22^43^34, 1^22^43^24, 1^22^33^24, 1^22^23^24, 
\\ & \qquad 1^22^234, 12^33^24, 12^23^24, 1^22^43^34^2, 12^234, 1234, 2^23^24, 2^234, 234, 34, 4\}.
\end{aligned}
\end{align}
If $i=1,2$, then $\pi_i(\beta)$ is not a multiple of another root for all $\beta\in \varDelta_+^A$. 
\begin{description}
\item[$i=3$] The pairs $(\beta,\beta')$ such that $\pi_3(\beta)=2\beta'$ are $(1^22^23, 12)$, $(2^23,2)$. Otherwise $\pi_3(\beta)$ is not a multiple of another vector with integer entries.

\item[$i=4$] The pairs $(\beta,\beta')$ such that $\pi_4(\beta)=2\beta'$ are $(1^22^43^24, 12^23)$, $(1^22^23^24,123)$, $(2^23^24,23)$.
\end{description}
And it is clear that for $i=3,4$, the unique $n \geq 2$ such that $\pi_i(\beta)=n\beta'$, for some $\beta,\beta'$ in $\varDelta^A_+$, is $n=2$. 
\end{proof}

Let us push \Cref{notation:indecomposables-bases} to $\Verp$.

\begin{notation} Let $A\in \Bbbk^{\theta\times\theta}$ be such that $\fg= \fg(A)$ is finite dimensional, and fix $i\in\bI_{\theta}$ with $a_{ii}=2$. 
We set
\begin{align*}
\ov{\bI} &:= \{j\in \bI | j\ne i,  1-c_{ij}^A< p \}.
\end{align*}
For $\beta'\in\nabla_{+}^{\sfS(\fg, e_i)}$ and $j\in\ov{\bI}$, consider the following subobjects of $\sfS(\fg, e_i)$:
\begin{itemize}[leftmargin=*]
\item $\ov{e_{\beta'}}\coloneqq\sfS(M_{\beta})$, $\ov{f_{\beta'}}\coloneqq\sfS(N_{\beta})$, where $\beta \in \varDelta_{+,k}$ is such that $\pi_{i}(\beta)=\beta'$; both are isomorphic to $\ttL_{k}$\footnote{Here, $L_p=0$.}. In particular we set $\ov{e_j}\coloneqq\sfS(M_{\alpha_j})$, $\ov{f_j}\coloneqq\sfS(N_{\alpha_j})$; both are isomorphic to $\ttL_{1-c_{ij}^A}$.
\item $\ov{h_j}\coloneqq\sfS\left(\bk \widetilde{h_{j}} \right)$, isomorphic to $\ttL_1$.
\item $\ov{S}\coloneqq\sfS(S)$, isomorphic to $\ttL_3$, or $0$ if $p=3$.
\end{itemize}
\end{notation} 

Next we summarize the data describing the Lie algebras $\sfS(\fg, e_i)$.

\begin{definition}
We say that a root $\beta\in\varDelta_{+,\min}^A$ is $i$-\emph{good} if there exist $\alpha,\gamma \in\varDelta_{+,\min}^A$ and a decomposition $\beta=\widetilde{\alpha}+\widetilde{\gamma}$ with $\widetilde{\alpha}$ and $\widetilde{\gamma}$ in the $i$-strings of $\alpha$ and $\gamma$, respectively.
\end{definition}

In other words, $\beta$ is $i$-good if there exists a decomposition $\beta=\widetilde{\alpha}+\widetilde{\gamma}$ with $\widetilde{\alpha},\widetilde{\gamma}\in\varDelta_{+}^A$ such that $\widetilde{\alpha}$ and $\widetilde{\gamma}$ do not belong to $i$-strings of length $p$. 

The existence of roots which are not $i$-good corresponds to the fact that the Lie algebra $\sfS(\fg, e_i)$ is not generated by $\ov{e_j}$, $\ov{f_j}$, $\ov{h_j}$. But if all roots are $i$-good, then we will see that $\sfS(\fg, e_i)$ is generated by $\ov{e_j}$, $\ov{f_j}$, $\ov{h_j}$.

\begin{example}
Assume that $p=3$. Let $\fg$ be of type $F_4$ and $i=1$; the positive roots are given in \eqref{eq:F4-pos-roots}. Then $\beta=12^33^24$ is not $1$-good since the possible decompositions are
\begin{align*}
&12^234, 23, & &1234, 2^23, 
& &123, 2^234,& &12^23, 234,
& &12, 2^23^24,& &2, 12^23^24.
\end{align*}
All decompositions have a root in a $1$-string of length $3$ since
\begin{align*}
M_{2^23} &= \Bbbk e_{2^23}\oplus\Bbbk e_{12^23}\oplus\Bbbk e_{1^22^23}, & 
M_{2^234} &=\Bbbk e_{2^234} \oplus\Bbbk e_{12^234}\oplus\Bbbk e_{1^22^234}, 
\\
M_{2^23^24} &=\Bbbk e_{2^23^24}\oplus\Bbbk e_{12^23^24}\oplus\Bbbk e_{1^22^23^24}.
\end{align*}
From here we can deduce that $\ov{e_{2^33^24}}$ does not belong to the subalgebra of $\sfS(\fg, e_i)$ generated by $\ov{e_j}$. This example corresponds to $(\star)$ in 
\cite{Kan}*{\S 4.6.5}.
\end{example}

\begin{theorem}\label{thm:ss-description-S(g)}
Let $A$ be such that $\dim \fg(A)<\infty$, $i\in\bI_{\theta}$ such that $a_{ii}=2$.
\begin{enumerate}[leftmargin=*,label=\rm{(\roman*)}]
\item\label{item:triang-decomp-S(g)} $\sfS(\fg, e_i)$ has a triangular decomposition $\sfS(\fg, e_i)=\sfS(\fg, e_i)_+\oplus\sfS(\fg, e_i)_0\oplus\sfS(\fg, e_i)_-$, where
\begin{align*}
\sfS(\fg, e_i)_{\pm}&=\bigoplus_{\beta\in \nabla^{\sfS(\fg, e_i)}} \sfS(\fg, e_i)_{\pm \beta}, & 
\sfS(\fg, e_i)_0&=\ov S \oplus \left( \oplus_{j\ne i} \ov{h_j} \right).
\end{align*}
Moreover the non-trivial homogeneous components are simple.
\item\label{item:grading-S(g)} $\sfS(\fg, e_i)$ is a $\bZ^{\theta-1}$-graded Lie algebra in $\Ver_p$.
\item\label{item:non-deg-form-S(g)} $\sfS(\fg, e_i)$ has an invariant non-degenerate symmetric form $\mathtt{B}$ such that
\begin{align*}
\restr{\mathtt{B}}{\sfS(\fg, e_i)_{\alpha} \ot \sfS(\fg, e_i)_{\beta}} &=0 && \text{ if }\alpha\ne -\beta\in \bZ^{\theta-1}.
\end{align*}
\item\label{item:gen-deg-one} If every $\beta\in\varDelta_{+,\min}^A$ is $i$-good, then $\sfS(\fg, e_i)$ is generated by $\ov{e_j}$, $\ov{f_j}$, $j\in\ov{\bI}$, and $\sfS(\fg, e_i)_0$. 
\item\label{item:delta-parabolic-root-system} If every $\beta\in\varDelta_{+,\min}^A$ is $i$-good, then $\varDelta^{\sfS(\fg, e_i)}$ is the (reduced) root system of the parabolic restriction orthogonal to $\alpha_i$.
\end{enumerate}
\end{theorem}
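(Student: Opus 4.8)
The plan is to read off the structure of $\sfS(\fg,e_i)$ from the $\balp_p$-module description in \Cref{prop:semisimplification-multidimension} together with the generation statement \Cref{prop:generation-degree1-alphap}, and then transport everything through the functor $\sfS$. I would first dispatch \ref{item:grading-S(g)}: the remark preceding the theorem equips $(\fg,e_i)$ with the $\bZ^{\theta-1}$-grading induced by $\pi_i\colon\bZ^\theta\to\bZ^{\theta-1}$, for which $e_i$ has degree $0$; thus $\ad e_i$ is homogeneous and the grading is one by objects of $\Rep(\balp_p)$, which $\sfS$ carries to $\sfS(\fg,e_i)$ since it is additive and monoidal. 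For \ref{item:triang-decomp-S(g)} I apply $\sfS$ to the decomposition in \Cref{prop:semisimplification-multidimension}\ref{item:semisimplification-multidimension-ii}: the summands $L_p$ are negligible and vanish, each $M_\beta$ (resp.\ $N_\beta$) with $\beta\in\varDelta^A_{+,\min}$ of string length $k<p$ becomes $\ttL_k\subseteq\sfS(\fg,e_i)_{\pi_i(\beta)}$ (resp.\ in degree $-\pi_i(\beta)$), and the degree-zero piece is exactly $\ov S\oplus\bigoplus_{j\ne i}\ov{h_j}$. Positivity of $\pi_i$ on the positive cone makes $\sfS(\fg,e_i)_{\pm}$ subalgebras normalized by $\sfS(\fg,e_i)_0$, giving the triangular decomposition. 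Simplicity of the nonzero-degree components is the key local point: if two string generators $\beta_1\ne\beta_2$ satisfied $\pi_i(\beta_1)=\pi_i(\beta_2)$ then $\beta_1-\beta_2\in\bZ\alpha_i$ would place them in a common maximal $\alpha_i$-string, contradicting uniqueness of the generator; hence each fibre of $\pi_i$ contributes a single $\ttL_k$ with $k<p$, which is simple in $\Ver_p$.

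For \ref{item:non-deg-form-S(g)} I would start from the standard non-degenerate invariant symmetric form $B$ on the finite-dimensional contragredient Lie algebra $\fg'(A)$, which satisfies $B(\fg_\alpha,\fg_\beta)=0$ whenever $\alpha+\beta\ne 0$. Invariance under $\ad e_i$ reads $B([e_i,x],y)+B(x,[e_i,y])=0$, i.e.\ $B$ is a morphism $\fg'\ot\fg'\to L_1$ in $\Rep(\balp_p)$. Then \Cref{lemma:bilinear-forms-ss} shows the induced form $\mathtt B=\ov B$ is symmetric, invariant and non-degenerate in $\Ver_p$, and the graded orthogonality $B(\fg_\alpha,\fg_\beta)=0$ for $\alpha+\beta\ne 0$ descends under $\pi_i$ to the stated vanishing of $\mathtt B$ off the anti-diagonal of the $\bZ^{\theta-1}$-grading.

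For \ref{item:gen-deg-one} I would induct on $\height(\beta)$ over string generators $\beta$, the base case being the simple roots $\alpha_j$ with $j\in\ov{\bI}$. For $\height(\beta)>1$ the $i$-good hypothesis furnishes a decomposition $\beta=\widetilde\alpha+\widetilde\gamma$ with $\widetilde\alpha,\widetilde\gamma$ lying in the $\alpha_i$-strings of generators $\alpha,\gamma\in\varDelta^A_{+,\min}$; these have smaller height, so by induction $\sfS(M_\alpha)$ and $\sfS(M_\gamma)$ lie in the subalgebra generated by the $\ov{e_j}$, while \Cref{prop:generation-degree1-alphap} gives $M_\beta\subseteq[M_\alpha,M_\gamma]$ in $(\fg',e_i)$. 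The crux, and the main obstacle, is to promote this to $\Ver_p$: I must check that the induced component $\sfS(M_\alpha)\ot\sfS(M_\gamma)\to\sfS(M_\beta)$ of the bracket is nonzero. Since $\Hom_{\Ver_p}(\ttL_a,\ttL_b)=0$ for $a\ne b$, a surjection $M_\alpha\ot M_\gamma\twoheadrightarrow M_\beta$ survives $\sfS$ only through the fusion channel isomorphic to $\ttL_k$ in \eqref{eq:verp-fusionrules}; the non-vanishing of this channel is exactly what the explicit brackets of \Cref{lem:bracket-same-alphap} and \Cref{lem:bracket-sl2-g-alphap}, whose scalars are nonzero modulo the relevant $p$, guarantee.

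Finally, for \ref{item:delta-parabolic-root-system} I would compare $\varDelta^{\sfS(\fg,e_i)}$, defined as $\nabla^{\sfS(\fg,e_i)}=\pi_i(\varDelta^A_{\min})$ with its proper multiples removed, to the parabolic restriction $\ov{\varDelta}=\{\pi_i(\alpha)'\mid\alpha\in\varDelta^A\}$ of \Cref{lem:Cuntz-Lentner}. The combinatorial core is that $\pi_i(\varDelta^A_{+,\min})$ and $\pi_i(\varDelta^A_+)$ determine the same set of primitive directions: every $\pi_i(\alpha)$ is a positive multiple of some $\pi_i(\beta)$ with $\beta\in\varDelta^A_{+,\min}$, where the generation result \ref{item:gen-deg-one} supplies the actual root vectors realizing each class. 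The precise multiples that occur are recorded by \Cref{prop:nabla-delta-parabolic-root-system} (supplemented by the rank-two analyses of \Cref{ex:A2}--\Cref{ex:G2} in types $A_2,B_2,G_2$ and \Cref{ex:br(3)} in the Brown case), so that passing to primitive vectors on both sides yields $\varDelta^{\sfS(\fg,e_i)}=\ov{\varDelta}$, which is the reduced root system of the restriction orthogonal to $\alpha_i$.
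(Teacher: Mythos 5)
Your treatment of parts \ref{item:triang-decomp-S(g)}, \ref{item:grading-S(g)}, \ref{item:non-deg-form-S(g)} and \ref{item:delta-parabolic-root-system} follows the same route as the paper: semisimplify the isotypic decomposition of \Cref{prop:semisimplification-multidimension}, transport the $\bZ^{\theta-1}$-grading and the invariant form through $\sfS$ via \Cref{lemma:bilinear-forms-ss}, and reduce \ref{item:delta-parabolic-root-system} to \Cref{lem:Cuntz-Lentner} together with \Cref{prop:nabla-delta-parabolic-root-system} and the low-rank examples; these parts are fine. One small gloss: your simplicity argument in \ref{item:triang-decomp-S(g)} tacitly assumes that $\alpha_i$-strings are unbroken, i.e.\ that a fibre of $\pi_i$ meets $\varDelta_+^A$ in a single maximal string; two distinct generators $\beta_1\ne\beta_2$ with $\beta_1-\beta_2\in\bZ\alpha_i$ are not automatically in a common string unless one knows this. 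It does follow from \Cref{lem:sum-of-roots} \ref{item:sum-of-roots-i} (a nonzero power $(\ad e_i)^n e_\beta$ forces all intermediate powers to be nonzero), and is worth saying.

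The genuine gap is in \ref{item:gen-deg-one}. You correctly isolate the crux: in the inductive step one must show that the $\Verp$-component $[-,-]\colon\ov{e_{\alpha'}}\ot\ov{e_{\gamma'}}\to\ov{e_{\beta'}}$ is nonzero, and surjectivity of $[-,-]\colon M_\alpha\ot M_\gamma\to M_\beta$ in $\Rep(\balp_p)$ (from \Cref{prop:generation-degree1-alphap}) does not give this, since the bracket could be supported on channels killed by $\sfS$. But the justification you offer --- that the non-vanishing is ``exactly what \Cref{lem:bracket-same-alphap} and \Cref{lem:bracket-sl2-g-alphap} guarantee'' --- fails: those lemmas compute only the brackets $M_{\alpha_j}\ot N_{\alpha_j}$ (a positive simple-root string against a negative one) and $S\ot M_{\alpha_j}$, $S\ot N_{\alpha_j}$, $S\ot S$. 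They say nothing about $[M_\alpha,M_\gamma]$ for \emph{two positive} string generators $\alpha,\gamma$, which is what the induction needs already in its first nontrivial instance ($\alpha=\alpha_j$, $\gamma=\alpha_k$ with $j\ne k$; note \Cref{lem:bracket-different-alphap} only treats $M_{\alpha_j}\ot N_{\alpha_k}$, where the bracket vanishes). The paper fills exactly this hole with a separate argument: using \Cref{thm:cuntz-heck} it reduces to rank $3$, classifies the possible tuples of string lengths and offsets $(a,b,c,j,k)$ that can occur --- $(1,t,t,0,0)$, $(2,1,2,0,1)$, $(2,3,2,0,0)$, $(2,2,3,0,1)$ --- and for each tuple computes in explicit cyclic bases that the copy of $L_b$ inside $M_\alpha\ot M_\gamma$ (as in \Cref{ex:alphap-isotypic-L2}, \Cref{ex:alphap-isotypic-L3-L2-p>3}) maps onto $M_\beta$ with scalar $\mathtt{c}$, $2\mathtt{c}$ or $3\mathtt{c}$, where $\mathtt{c}\ne0$. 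This verification cannot be replaced by the assertion that the relevant scalars are ``nonzero modulo the relevant $p$'': some channels really do die in small characteristic, e.g.\ in \Cref{lem:bracket-sl2-g-alphap} \ref{item:bracket-sl2-Mj-alphap-aij=-3-p=5} ($c_{ij}=-3$, $p=5$) the bracket kills the $L_2$-channel, the scalar $15$ of case \ref{item:bracket-sl2-Mj-alphap-aij=-3-p>5} being $0$ modulo $5$. Identifying which tuples occur and checking that the corresponding scalars survive is the heart of the paper's proof of \ref{item:gen-deg-one}, and it is absent from your argument.
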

\begin{proof}
Notice that \ref{item:triang-decomp-S(g)} follows from the triangular decomposition of $\fg$ and Proposition \ref{prop:non degenerate odd}.
\ref{item:grading-S(g)} and \ref{item:non-deg-form-S(g)} follow from the $\bZ^{\theta}$-grading on $\fg$ and  Lemma \ref{lemma:bilinear-forms-ss}.

\medspace

Next we deal with \ref{item:gen-deg-one}. It suffices to prove that $\ov{e_{\beta'}}$ belongs to the subalgebra generated by $\ov{e_j}$, $j\in\ov{\bI}$, for all $\beta'\in\nabla_{+}$, since the same argument shows that $\ov{f_{\beta'}}$ belongs to the subalgebra generated by $\ov{f_j}$, $j\in\ov{\bI}$, for all $\beta'\in\nabla_{+}$.
The proof is by induction on the height of $\beta'$. If $\beta'$ has height one, then $\beta'=\alpha_j$ for some $j\in\ov{\bI}$ and the claim now follows by hypothesis. 
Now we assume that $\beta'$ has height $>1$. Let $\beta\in\varDelta_{+,\min}^A$ be such that $\beta'=\pi_i(\beta)$. As $\beta$ is $i$-good, there exist 
$\alpha,\gamma \in\varDelta_{+,\min}^A$ and a decomposition $\beta=\widetilde{\alpha}+\widetilde{\gamma}$ with $\widetilde{\alpha}$ and $\widetilde{\gamma}$ in the $i$-strings of $\alpha$ and $\gamma$, respectively. Let $\alpha'=\pi_i(\alpha)$, $\gamma'=\pi_i(\gamma)$.

\begin{claim}\label{claim:bracket-not-zero}
The map $[-,-]:\ov{e_{\alpha'}}\ot\ov{e_{\gamma'}}\to\ov{e_{\beta'}}$ is not zero.
\end{claim}

Let $a,b,c$ be the lengths of $i$-strings of $\alpha$, $\beta$ and $\gamma$, respectively.
Let $j,k\ge 0$ be such that $\widetilde{\alpha}=\alpha+j\alpha_i$ and $\widetilde{\gamma}=\gamma+k\alpha_i$: we may assume that $j\le k$ up to exchange $\alpha$ and $\gamma$. We look for the possible $5$-tuples $(a,b,c,j,k)$. By \Cref{thm:cuntz-heck}, there exists an element $w$ of the Weyl group(oid)  and $i_1,i_2,i_3\in\bI$ such that $w(\alpha_i)=\alpha_{i_1}$, $w(\alpha)\in\bN_0\alpha_{i_1}+\bN_0\alpha_{i_2}$, $w(\gamma)\in\bN_0\alpha_{i_1}+\bN_0\alpha_{i_2}+\bN_0\alpha_{i_3}$: hence, 
$w(\beta)=w(\alpha)+w(\gamma)+(j+k)\alpha_{i_1}\in \bN_0\alpha_{i_1}+\bN_0\alpha_{i_2}+\bN_0\alpha_{i_3}$ and $w$ sends the $i$-strings of $\alpha$, $\beta$, $\gamma$ bijectively to the $i_1$-strings of $w(\alpha)$, $w(\beta)$, $w(\gamma)$, respectively. Thus we have to look for the possible $5$-tuples just in rank 3. If the submatrix of rank 3 is not connected, then the result is straightforward. For connected submatrices of rank 3 we obtain the following:
\begin{itemize}[leftmargin=*]
\item $(1,1,1,0,0)$, for some triples of roots in type $C_3$ ($i=3$), and also in type $\brown(3)$ ($i=1,2$);
\item $(1,2,2,0,0)$ or $(2,2,1,0,0)$, for some triples of roots in types $A_3$ ($i=1$), $B_3$ ($i=1,2$), $C_3$ ($i=1,3$), and both matrices of type $\brown(3)$ ($i=1,2$);
\item $(2,1,2,0,1)$, for some triples of roots in types $A_3$ ($i=2$), $B_3$ ($i=1,2$), $C_3$ ($i=2,3$),and both matrices of type $\brown(3)$ ($i=1,2$);
\item $(1,3,3,0,0)$, for $\alpha=\alpha_1$, $\gamma=\alpha_2$ and $i=3$ in type $B_3$;
\item $(2,3,2,0,0)$, for $\alpha=\alpha_2$, $\gamma=\alpha_2+\alpha_3$ and $i=1$ in type $C_3$;
\item $(2,2,3,0,1)$, for $\alpha=\alpha_1$, $\gamma=\alpha_3$ and $i=2$ in type $C_3$.
\end{itemize}

Now we prove \Cref{claim:bracket-not-zero} for each possible $5$-tuple. For $(1,t,t,0,0)$, $t\in\bI_3$,
\begin{align*}
M_{\alpha}&=\Bbbk e_{\alpha}\simeq L_1, & M_{\beta}&=\bigoplus_{s=0}^{t-1} \Bbbk e_{\beta+s\alpha_i}\simeq L_t,  & M_{\gamma}&=\bigoplus_{s=0}^{t-1} \Bbbk e_{\gamma+s\alpha_i}\simeq L_t.
\end{align*}
By \Cref{prop:generation-degree1-alphap} there exists $\mathtt{c}\ne0$ such that $[e_{\alpha},e_{\gamma}]=\mathtt{c} e_{\beta}$.
As $[e_i,e_{\alpha}]=0$,  we have that 
\begin{align*}
[e_{\alpha},e_{\gamma+s\alpha_i}]=[e_{\alpha},(\ad e_i)^se_{\gamma}]=(\ad e_i)^s[e_{\alpha},e_{\gamma}]=\mathtt{c} e_{\beta+s\alpha_i},
\end{align*}
which implies that $[-,-]:\ov{e_{\alpha'}}\ot\ov{e_{\gamma'}}=\ttL_1\ot\ttL_t \to\ov{e_{\beta'}}=\ttL_t$ is $\mathtt{c}$ times the canonical map.

For $(2,1,2,0,1)$, we have that $\beta=\alpha+\gamma+\alpha_i$ and
\begin{align*}
M_{\alpha}&=\Bbbk e_{\alpha}\oplus \Bbbk (\ad e_i)e_{\alpha} \simeq L_2, & M_{\beta}&= \Bbbk e_{\beta}\simeq L_1,  & 
M_{\gamma}&=\Bbbk e_{\gamma}\oplus \Bbbk (\ad e_i)e_{\gamma} \simeq L_2.
\end{align*}
By \Cref{prop:generation-degree1-alphap}, there exists $\mathtt{c}\ne0$ such that $[e_{\alpha},(\ad e_i)e_{\gamma}]=\mathtt{c} e_{\beta}$.
As the $i$-string of $\beta$ has length 1, $\beta\pm\alpha_i\notin\varDelta_{+}$, so
$[e_i,e_{\beta}]=[e_{\alpha},e_{\gamma}]=0$. Thus 
\begin{align*}
[(\ad e_i)e_{\alpha},e_{\gamma}]=-[e_{\alpha},(\ad e_i)e_{\gamma}]=-\mathtt{c} e_{\beta},
\end{align*}
which implies that $[-,-]:\ov{e_{\alpha'}}\ot\ov{e_{\gamma'}}=\ttL_2\ot\ttL_2 \to\ov{e_{\beta'}}=\ttL_1$ is $2\mathtt{c}$ times the canonical map, see Example \ref{ex:alphap-isotypic-L2}.

For $(2,3,2,0,0)$, $p>3$, we have that $\beta=\alpha+\gamma$ and
\begin{align*}
M_{\alpha}&=\Bbbk e_{\alpha}\oplus \Bbbk (\ad e_i)e_{\alpha} \simeq L_2, & 
M_{\beta}&= \bigoplus_{0\le s\le2} \Bbbk (\ad e_i)^se_{\beta}\simeq L_3,  & 
M_{\gamma}&=\Bbbk e_{\gamma}\oplus \Bbbk (\ad e_i)e_{\gamma} \simeq L_2.
\end{align*}
By \Cref{prop:generation-degree1-alphap}, there exists $\mathtt{c}\ne0$ such that $[e_{\alpha},e_{\gamma}]=\mathtt{c} e_{\beta}$.
The copy of $L_3$ inside $M_{\alpha}\ot M_{\gamma}$ as in Example \ref{ex:alphap-isotypic-L2} is spanned by 
$e_{\alpha} \ot e_{\gamma}$, $e_{\alpha}\ot (\ad e_i)e_{\gamma}+(\ad e_i)e_{\alpha}\ot e_{\gamma}$ and $2(\ad e_i)e_{\alpha}\ot (\ad e_i)e_{\gamma}$. Using the Jacobi identity and that $(\ad e_i)^2e_{\alpha}=(\ad e_i)^2e_{\gamma}=0$,
\begin{align*}
\mathtt{c} (\ad e_i)e_{\beta} &= (\ad e_i)[e_{\alpha},e_{\gamma}]=[e_{\alpha},(\ad e_i)e_{\gamma}]+[(\ad e_i)e_{\alpha}, e_{\gamma}],
\\
\mathtt{c} (\ad e_i)^2 e_{\beta} &= (\ad e_i)^2[e_{\alpha},e_{\gamma}]=
2[(\ad e_i)e_{\alpha},(\ad e_i)e_{\gamma}],
\end{align*}
so $[-,-]:\ov{e_{\alpha'}}\ot\ov{e_{\gamma'}}=\ttL_2\ot\ttL_2 \to\ov{e_{\beta'}}=\ttL_3$ is $\mathtt{c}$ times the canonical map.

For $(2,2,3,0,1)$, $p>3$, we have that $\beta=\alpha+\gamma+\alpha_i$ and
\begin{align*}
M_{\alpha}&=\Bbbk e_{\alpha}\oplus \Bbbk (\ad e_i)e_{\alpha} \simeq L_2, & 
M_{\beta}&= \Bbbk e_{\beta}\oplus \Bbbk (\ad e_i)e_{\beta} \simeq L_2,  & 
M_{\gamma}&=\bigoplus_{0\le s\le2} \Bbbk (\ad e_i)^se_{\gamma}\simeq L_3.
\end{align*}
By \Cref{prop:generation-degree1-alphap}, there exists $\mathtt{c}\ne0$ such that $[e_{\alpha},(\ad e_i)e_{\gamma}]=\mathtt{c} e_{\beta}$.
The copy of $L_2$ inside $M_{\alpha}\ot M_{\gamma}$ as in Example \ref{ex:alphap-isotypic-L3-L2-p>3} is spanned by 
$e_{\alpha}\ot (\ad e_i)e_{\gamma}-2(\ad e_i)e_{\alpha}\ot e_{\gamma}$ and $e_{\alpha}\ot (\ad e_i)^2e_{\gamma}-(\ad e_i)e_{\alpha}\ot (\ad e_i)e_{\gamma}$. Using the Jacobi identity and that $[e_{\alpha},e_{\gamma}]=0$, we check that $[-,-]:\ov{e_{\alpha'}}\ot\ov{e_{\gamma'}}=\ttL_2\ot\ttL_3 \to\ov{e_{\beta'}}=\ttL_2$ is $3\mathtt{c}$ times the canonical map.

\medspace

Finally \ref{item:delta-parabolic-root-system} follows from \Cref{lem:Cuntz-Lentner} and the following claim: if $\beta\in \bZ^{\theta-1}$ has coprime entries and $m\ge 2$ is such that $m\beta\in\pi_i(\nabla^A)$, then $\beta\in\pi_i(\nabla^A)$. The proof of the claim follows from  \Cref{prop:nabla-delta-parabolic-root-system} and \Crefrange{ex:A2}{ex:br(3)}.
\end{proof}

Next we describe explicitly some examples of Lie algebras in $\Ver_p$ obtained by semisimplification as in \Cref{thm:ss-description-S(g)}. To describe them as objects in $\Ver_p$, we use the notation $\ttL_i^{(n)}$: a copy of $\ttL_i$ in degree $n\in\bZ$. Also, $\ttb:\sfS(\fg, e_i)\ot \sfS(\fg, e_i)\to\sfS(\fg, e_i)$ denotes the bracket.

\begin{example}\label{ex:A2-explicit-structure}
As in \Cref{ex:A2,ex:B2}, we take $\fg$ of type either $A_2$ or $B_2$, and consider the semisimplification with respect to $e_1$. The Cartan matrix is $\begin{pmatrix}
2 & -a \\ -1 & 2 \end{pmatrix}$, with $a$ being either $1$ for type $A_2$, or $a=2$ for $B_2$. As an object in $\Ver_p$, we have
\begin{align*}
\sfS(\fg, e_1)&=\ttL_{a+1}^{(1)}\oplus \left(\ttL_3^{(0)} \oplus \ov \ttL_1^{(0)} \right)\oplus \ttL_{a+1}^{(-1)}, & 
\ttL_{a+1}^{(1)}&=\overline{M_2}, & \ttL_3^{(0)}&=\overline{S}, & \ttL_1^{(0)}&=\overline{h_2}, & \ttL_{a+1}^{(-1)}&=\overline{N_2}.
\end{align*}
Here, $\ttL_3^{(n)}=0$ if $p=3$. By \Cref{lem:torus-action-alphap,lem:bracket-sl2-g-alphap,lem:bracket-same-alphap},
\begin{align*}
\restr{\ttb}{\ttL_1^{(0)}\ot\ttL_{a+1}^{(\pm 1)}}&=\pm (4-a)\iota_{\ttL_{a+1}^{(\pm 1)}}, &
\restr{\ttb}{\ttL_{a+1}^{(1)}\ot\ttL_{a+1}^{(-1)}}&=6^{a-1}\iota_{\ttL_1^{(0)}}\oplus(-2^{2a-1}\iota_{\ttL_3^{(0)}}),
\\
\restr{\ttb}{\ttL_3^{(0)}\ot\ttL_{a+1}^{(\pm 1)}}&=(a+2)\iota_{\ttL_{a+1}^{(\pm 1)}}, &
\restr{\ttb}{\ttL_1^{(0)}\ot\ttL_3^{(0)}}&=\restr{\ttb}{\ttL_{a+1}^{(\pm 1)}\ot\ttL_{a+1}^{(\pm 1)}}=0.
\end{align*}
\end{example}

\begin{example}\label{ex:B2-explicit-structure}
Let $\fg$ be of type $B_2$ as in \Cref{ex:B2}, $i=2$. As an object in $\Ver_p$, 
\begin{align*}
\sfS(\fg, e_2) &=\ttL_1^{(2)}\oplus \ttL_2^{(1)}\oplus \left(\ttL_3^{(0)} \oplus \ov \ttL_1^{(0)} \right) \oplus \ttL_2^{(-1)}\oplus \ttL_1^{(-2)}.
\end{align*}
By \Cref{lem:torus-action-alphap,lem:bracket-sl2-g-alphap,lem:bracket-same-alphap} and direct computation,
\begin{align*}
\restr{\ttb}{\ttL_1^{(0)}\ot\ttL_2^{(\pm 1)}}&=\pm 2\iota_{\ttL_2^{(\pm 1)}}, &
\restr{\ttb}{\ttL_3^{(0)}\ot\ttL_2^{(\pm 1)}}&=2\iota_{\ttL_2^{(\pm 1)}}, &
\restr{\ttb}{\ttL_2^{(1)}\ot\ttL_2^{(-1)}}&=\iota_{\ttL_1^{(0)}}\oplus(-2\iota_{\ttL_3^{(0)}}),
\\
\restr{\ttb}{\ttL_1^{(0)}\ot\ttL_1^{(\pm 2)}}&=\pm 4\iota_{\ttL_1^{(\pm 2)}},  &
\restr{\ttb}{\ttL_2^{(\pm 1)}\ot\ttL_2^{(\pm 1)}}&=2\iota_{\ttL_1^{(\pm 2)}},&
\restr{\ttb}{\ttL_1^{(2)}\ot\ttL_1^{(-2)}}&=\iota_{\ttL_1^{(0)}},
\end{align*}
and the remaining brackets $\restr{\ttb}{\ttL_i^{(m)}\ot\ttL_j^{(\pm n)}}$ are zero.
\end{example}

\begin{example}\label{ex:G2-explicit-structure}
Assume $p>3$ and consider $\fg$ of type $G_2$ as in \Cref{ex:G2}. For $i=1$,
\begin{align*}
\sfS(\fg, e_1) &= \ttL_1^{(2)}\oplus \ttL_4^{(1)}\oplus \left(\ttL_3^{(0)} \oplus \ov \ttL_1^{(0)} \right) \oplus \ttL_4^{(-1)}\oplus \ttL_1^{(-2)}.
\end{align*}
By \Cref{lem:torus-action-alphap,lem:bracket-sl2-g-alphap,lem:bracket-same-alphap} and direct computation,
\begin{align*}
\restr{\ttb}{\ttL_1^{(0)}\ot\ttL_4^{(\pm 1)}}&=\pm \iota_{\ttL_2^{(\pm 1)}}, &
\restr{\ttb}{\ttL_3^{(0)}\ot\ttL_4^{(\pm 1)}}&=15\iota_{\ttL_4^{(\pm 1)}}, &
\restr{\ttb}{\ttL_4^{(1)}\ot\ttL_4^{(-1)}}&=72\iota_{\ttL_1^{(0)}}\oplus(-120\iota_{\ttL_3^{(0)}}),
\\
\restr{\ttb}{\ttL_1^{(0)}\ot\ttL_1^{(\pm 2)}}&=\pm 2\iota_{\ttL_1^{(\pm 2)}},  &
\restr{\ttb}{\ttL_4^{(\pm 1)}\ot\ttL_4^{(\pm 1)}}&=4\iota_{\ttL_1^{(\pm 2)}},&
\restr{\ttb}{\ttL_1^{(2)}\ot\ttL_1^{(-2)}}&=3\iota_{\ttL_1^{(0)}},
\\
\restr{\ttb}{\ttL_1^{(2)}\ot\ttL_4^{(-1)}}&= \iota_{\ttL_4^{(1)}},  &
\restr{\ttb}{\ttL_1^{(-2)}\ot\ttL_4^{(1)}}&= \iota_{\ttL_4^{(-1)}},  &
\end{align*}
and the remaining brackets $\restr{\ttb}{\ttL_i^{(m)}\ot\ttL_j^{(\pm n)}}$ are zero.

Set now $i=2$. As an object in $\Ver_p$,
\begin{align*}
\sfS(\fg, e_2) &= \ttL_2^{(3)}\oplus \ttL_1^{(2)}\oplus \ttL_2^{(1)}\oplus \left(\ttL_3^{(0)} \oplus \ov \ttL_1^{(0)} \right) \oplus \ttL_2^{(-1)}\oplus \ttL_1^{(-2)} \oplus \ttL_2^{(-3)}.
\end{align*}
By \Cref{lem:torus-action-alphap,lem:bracket-sl2-g-alphap,lem:bracket-same-alphap} and direct computation,
\begin{align*}
\restr{\ttb}{\ttL_1^{(0)}\ot\ttL_2^{(\pm 1)}}&=\pm \iota_{\ttL_2^{(\pm 1)}}, &
\restr{\ttb}{\ttL_3^{(0)}\ot\ttL_2^{(\pm 1)}}&=3\iota_{\ttL_2^{(\pm 1)}}, &
\restr{\ttb}{\ttL_2^{(1)}\ot\ttL_2^{(-1)}}&=\iota_{\ttL_1^{(0)}}\oplus(-3\iota_{\ttL_3^{(0)}}),
\\
\restr{\ttb}{\ttL_1^{(0)}\ot\ttL_1^{(\pm 2)}}&=\pm 2\iota_{\ttL_1^{(\pm 2)}}, &
\restr{\ttb}{\ttL_3^{(0)}\ot\ttL_2^{(\pm 3)}}&=3\iota_{\ttL_2^{(\pm 3)}}, &
\restr{\ttb}{\ttL_1^{(2)}\ot\ttL_1^{(-2)}}&=3\iota_{\ttL_1^{(0)}},
\\
\restr{\ttb}{\ttL_1^{(0)}\ot\ttL_2^{(\pm 3)}}&=\pm 3\iota_{\ttL_2^{(\pm 3)}}, &
\restr{\ttb}{\ttL_2^{(\pm 1)}\ot\ttL_2^{(\pm 1)}}&=\iota_{\ttL_1^{(\pm 2)}}, &
\restr{\ttb}{\ttL_2^{(3)}\ot\ttL_2^{(-3)}}&=6\iota_{\ttL_1^{(0)}}\oplus(-6\iota_{\ttL_3^{(0)}}),
\\
\restr{\ttb}{\ttL_2^{(\pm 1)}\ot\ttL_1^{(\pm 2)}}&=\iota_{\ttL_2^{(\pm 3)}}, &
\restr{\ttb}{\ttL_1^{(\pm 2)}\ot\ttL_2^{(\mp 1)}}&=\iota_{\ttL_2^{(\pm 1)}},
\\
\restr{\ttb}{\ttL_2^{(\pm 3)}\ot\ttL_2^{(\mp 1)}}&=3\iota_{\ttL_1^{(\pm 2)}}, &
\restr{\ttb}{\ttL_2^{(\pm 3)}\ot\ttL_1^{(\mp 2)}}&=6\iota_{\ttL_2^{(\pm 1)}},
\end{align*}
and the remaining brackets $\restr{\ttb}{\ttL_i^{(m)}\ot\ttL_j^{(\pm n)}}$ are zero.
\end{example}

\begin{corollary}\label{coro:ss-description-S(g)-p=3}
Let $p=3$, and take $A$ and $i$ as in \Cref{thm:ss-description-S(g)}\ref{item:gen-deg-one}. Then $\sfS(\fg, e_i) \simeq \fg(B,\bp)$, where
\begin{itemize}
\item $\bp=(p_j)_{j\in\ov{\bI}}$, $p_j=\begin{cases} 1, & a_{ij}=0, \\ -1, & a_{ij}\ne 0;\end{cases}$
\item $B=(b_{jk})_{j,k\in\ov{\bI}}$, $b_{jk}=\begin{cases} a_{jk}, & a_{ij}=0, \\ -a_{jk}-a_{ji}a_{ik}, & a_{ij}\ne 0.\end{cases}$
\end{itemize}
\end{corollary}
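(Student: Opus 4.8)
The plan is to realize $\sfS(\fg,e_i)$ as the contragredient Lie superalgebra with data $(B,\bp)$ by producing its Chevalley generators explicitly, checking the defining relations \eqref{eq:relaciones gtilde}, and then promoting this to an isomorphism via the structural results of \Cref{thm:ss-description-S(g)}. I would first record what is special to $p=3$. Since $\Ver_3=\sVec$, the object $\sfS(\fg,e_i)$ is an operadic Lie \emph{super}algebra (the super structure and the anti-symmetry/Jacobi axioms are automatic, as $\sfS$ is symmetric monoidal). Moreover $S\simeq L_3$ is negligible, so $\ov S=\sfS(S)=0$ and the degree-zero part collapses to the even abelian subalgebra $\sfS(\fg,e_i)_0=\bigoplus_{j\ne i}\ov{h_j}$. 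For $j\in\ov{\bI}$ one has $c_{ij}^A\in\{0,-1\}$, and $\ov{e_j}\simeq\ttL_{1-c_{ij}^A}$ is the even line $\ttL_1$ exactly when $a_{ij}=0$ and the odd line $\ttL_2$ exactly when $a_{ij}\ne 0$; the same holds for $\ov{f_j}$. This is precisely the parity vector $\bp$ of the statement.

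Next I would verify that $\ov{e_j},\ov{f_j}$ ($j\in\ov{\bI}$) together with the $\ov{h_j}$ satisfy the relations \eqref{eq:relaciones gtilde} attached to $(B,\bp)$. The relation $[\ov{h_j},\ov{h_k}]=0$ is inherited from the abelian $\fh$. By \Cref{lem:torus-action-alphap}, $\ov{h_j}=\sfS(\bk\widetilde h_j)$ acts on $\ov{e_k}$ by $\xi_k(\widetilde h_j)$; substituting $\widetilde h_j=h_j$ (if $a_{ij}=0$) or $\widetilde h_j=2h_j-a_{ji}h_i$ (if $a_{ij}\ne 0$), using $\xi_k(h_m)=a_{mk}$ and $2=-1$ in $\bF_3$, this eigenvalue equals $b_{jk}$, which is what pins down the realization of $B$. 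Finally, \Cref{lem:bracket-different-alphap} gives $[\ov{e_j},\ov{f_k}]=0$ for $j\ne k$, while \Cref{lem:bracket-same-alphap} in the two relevant cases $c_{ij}=0$ and $c_{ij}=-1$, after discarding the summand $\ov S=0$, yields $[\ov{e_j},\ov{f_j}]=\ov{h_j}$ with coefficient $1$; together these give $[\ov{e_j},\ov{f_k}]=\delta_{jk}\ov{h_j}$.

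With the relations in hand, the universal property of $\tfg(B,\bp)$ produces a graded homomorphism $\phi\colon\tfg(B,\bp)\to\sfS(\fg,e_i)$ sending the Chevalley generators and the Cartan elements $h_j^B$ to $\ov{e_j},\ov{f_j},\ov{h_j}$ (matching the $\bZ^{\theta-1}$-grading of $\sfS(\fg,e_i)$ with the root lattice of $B$ through $\pi_i$). By \Cref{thm:ss-description-S(g)}\ref{item:gen-deg-one} these elements generate $\sfS(\fg,e_i)$ under the $i$-good hypothesis, so $\phi$ is surjective; since the $\ov{h_j}$ are linearly independent, $\ker\phi$ meets the Cartan of $\tfg(B,\bp)$ trivially and is therefore contained in the defining maximal ideal $\frr^B$. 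It remains to prove the reverse inclusion $\frr^B\subseteq\ker\phi$, so that $\phi$ descends to an isomorphism $\fg(B,\bp)\xrightarrow{\sim}\sfS(\fg,e_i)$ (the Cartan being identified with $\bigoplus_{j\ne i}\ov{h_j}$, so that it is really the subalgebra generated by the Chevalley generators that is produced).

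The main obstacle is this last inclusion, namely showing that $\sfS(\fg,e_i)$ inherits the contragredient maximality property: every nonzero $\bZ^{\theta-1}$-graded ideal meets $\sfS(\fg,e_i)_0$ nontrivially. I would deduce it from the non-degenerate invariant graded form of \Cref{thm:ss-description-S(g)}\ref{item:non-deg-form-S(g)} together with the simplicity of the homogeneous components in \ref{item:triang-decomp-S(g)}, by the standard argument: bracketing a nonzero element of $I$ in degree $\gamma\ne 0$ against a suitable element of degree $-\gamma$ (which exists by non-degeneracy) lands in $I\cap\sfS(\fg,e_i)_0$ and is nonzero by invariance, a contradiction. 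Granting this, $\phi(\frr^B)$ is a graded ideal of $\sfS(\fg,e_i)$ with vanishing degree-zero component, hence zero, giving $\frr^B\subseteq\ker\phi$ and thus $\ker\phi=\frr^B$. Alternatively, one can bypass the maximality claim and instead compare graded dimensions: by \Cref{thm:ss-description-S(g)}\ref{item:delta-parabolic-root-system} the root system $\varDelta^{\sfS(\fg,e_i)}$ is the parabolic restriction orthogonal to $\alpha_i$, which coincides with $\varDelta^{(B,\bp)}$, forcing the surjection $\sfS(\fg,e_i)\twoheadrightarrow\fg(B,\bp)$ coming from $\ker\phi\subseteq\frr^B$ to be an equality.
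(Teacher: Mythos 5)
Your proposal is correct and takes essentially the same route as the paper's own proof: the same parity determination, the same verification of the relations \eqref{eq:relaciones gtilde} via \Cref{lem:torus-action-alphap}, \Cref{lem:bracket-different-alphap} and \Cref{lem:bracket-same-alphap}, the surjection from $\widetilde{\fg}(B,\bp)$ using \Cref{thm:ss-description-S(g)}\ref{item:gen-deg-one}, the factorization through the maximal ideal, and the final appeal to the non-degenerate invariant graded form of \Cref{thm:ss-description-S(g)}\ref{item:non-deg-form-S(g)}. The only difference is presentational: you spell out the standard graded-ideal-versus-form argument (and offer a root-system comparison as an alternative ending) where the paper simply asserts that non-degeneracy forces the surjection $\pi\colon \sfS(\fg,e_i)\twoheadrightarrow \fg(B,\bp)$ to be an isomorphism.
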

\begin{proof}
Let $\ov{\fg}:=\sfS(\fg, e_i)$.
The $\bZ^{\theta-1}$-grading of $\ov{\fg}$ induces a $\bZ$-grading such that $\ov{e_j}\in \ov{\fg}_1$, $\ov{f_j}\in \ov{\fg}_{-1}$, $j\in\ov{\bI}$, and $\ov{h_k}\in\ov{\fg}_0$, $k\ne i$. By Proposition \ref{prop:semisimplification-multidimension} \ref{item:semisimplification-multidimension-i}, if $a_{ij}=0$, then $M_{\alpha_j}\simeq L_1\simeq N_{\alpha_j}$, so $\ov{e_j}$, $\ov{f_j}$ are even, while if $a_{ij}\ne0$ and $j\in\ov{\bI}$, then $M_{\alpha_j}\simeq L_2\simeq N_{\alpha_j}$; thus the parity of the $\ov{e_j}$'s and the $\ov{f_j}$'s is given by $\bp$.

By Theorem \ref{thm:ss-description-S(g)}, $\ov{\fg}$ is generated by $\ov{e_j}$, $\ov{f_j}$, $j\in\ov{\bI}$, and $\ov{h_k}$, $k\ne i$, since $S=0$.
By Lemmas \ref{lem:bracket-different-alphap} and \ref{lem:bracket-same-alphap}, $[\ov{e_j},\ov{f_k}]=\delta_{jk}\ov{h_j}$ up to rescale the $\ov{f_j}$'s. 
Following Notation \ref{notation:indecomposables-bases},
\begin{align*}
[\widetilde{h_{j}}, e_k]=\begin{cases}
a_{jk}e_k, & a_{ij}=0, \\ -(a_{jk}+a_{ji}a_{ik})e_k, & a_{ij}\ne 0.
\end{cases}
\end{align*}
Thus $[\ov{h_j},\ov{e_k}]=b_{jk} \ov{e_k}$ for all $j,k\in\ov{\bI}$. Analogously, $[\ov{h_j},\ov{f_k}]=-b_{jk} \ov{f_k}$ for all $j,k\in\ov{\bI}$. 
As $\fh$ is abelian, we also have that $[\ov{h_j},\ov{h_k}]=0$ for all $j,k$.  Thus relations \eqref{eq:relaciones gtilde} hold in $\ov{\fg}$, which implies that there is a surjective map $\widetilde{\fg}(B,\bp)\twoheadrightarrow \ov{\fg}$. 

As $\fg(B,\bp)$ is the quotient of $\widetilde{\fg}(B,\bp)$ by the maximal ideal trivially intersecting $\fh$, the map above factorizes through a surjective map $\pi:\ov{\fg}\twoheadrightarrow \fg(B,\bp)$. As $\ov{\fg}$ has a non-degenerate invariant symmetric bilinear form by Theorem \ref{thm:ss-description-S(g)}, the map $\pi$ is an isomorphism.
\end{proof}

\begin{remark}
This corollary is related with \cite{Kan}*{Theorem 3.6.5} when the characteristic is three. It applies to those examples in loc. cit. where the element of the Lie algebra is homogeneous.
\end{remark}

\begin{example}
We consider semisimplifications of the Lie algebra $\brown(3)$, see Example \ref{ex:br(3)}.
\begin{enumerate}[leftmargin=*]
\item The semisimplification of $\fg(A_1)$ under $\ad e_1$ is the Lie superalgebra $\fg(B_1,\bp_1)$, where
\begin{align*}
B_1 &=\begin{bmatrix} 0 & 1 \\ 1 & 0 \end{bmatrix}, & \bp_1 =(1,0).
\end{align*}

\item The semisimplification of $\fg(A_1)$ under $\ad e_2$ is the Lie superalgebra $\fg(B_2,\bp_2)$, where
\begin{align*}
B_2 &=\begin{bmatrix} 0 & 1 \\ -1 & 2 \end{bmatrix}, & \bp_2 =(1,1).
\end{align*}

\item The semisimplification of $\fg(A_2)$ under $\ad e_1$ is the Lie superalgebra $\fg(B_3,\bp_3)$, where
\begin{align*}
B_3 &=\begin{bmatrix} 2 & -2 \\ 1 & 0 \end{bmatrix}, & \bp_3=(1,0).
\end{align*}

\item The semisimplification of $\fg(A_2)$ under $\ad e_2$ does not fit in the context of \Cref{coro:ss-description-S(g)-p=3} since $\beta=\alpha_1+\alpha_2+\alpha_3$ is not $2$-good.
\end{enumerate}
That is, the three possible semisimplifications give the Lie superalgebra $\sbrown(2;3)$: we recover the three possible realizations of $\sbrown(2;3)$ as a contragredient Lie superalgebra. This corresponds to the construction given in \cite{Kan}*{\S 4.2}.
\end{example}

\subsection*{Acknowledgements} 
We are deeply grateful to Pavel Etingof and Arun Kannan for many helpful discussions. 
I.A. and G.S were partially supported by the Alexander von Humboldt Foundation through a Linkage Programme.
I. A. was partially supported by Conicet, SeCyT (UNC) and MinCyT. G.S was partially supported by an NSF Simons travel grant. 
The research of J.P. was partially supported by NSF grant DMS-2146392 and by Simons Foundation Award 889000 as part of the Simons Collaboration on Global Categorical Symmetries. J.P. would like to thank the hospitality
and excellent working conditions at the Department of Mathematics at the University of Hamburg, where J. P. has carried out part of this research as an Experienced Fellow of the Alexander von Humboldt Foundation.
The three authors would like to thank I. Heckenberger for the kind invitation and hospitality during the Workshop Hopf algebras and Tensor Categories in Marburg in May 2023. I.A. and G. S. would also thank J. P. and the people at the University of Hamburg for the excellent working condition during a short visit in May 2023, where big contributions to this work were carried out.

\bibliography{biblio}

\end{document}